\documentclass[12pt,oneside,reqno]{amsart}

\usepackage[a4paper,left=28mm,right=28mm,top=27mm,bottom=30mm]{geometry}
\usepackage{amsmath,amssymb,amsthm,mathtools,mathrsfs}
\usepackage{enumitem}
\usepackage{xcolor}
\usepackage[colorlinks=true,linkcolor=blue!55!black,citecolor=blue!55!black,
            urlcolor=blue!55!black]{hyperref}
\usepackage[nameinlink,noabbrev]{cleveref}
\hypersetup{
  pdftitle={Grothendieck Topologies Are Extensional Presentations of the Form of Sieves},
  pdfauthor={Roy Ferguson and Zurab Janelidze}
}

\allowdisplaybreaks
\setlist{leftmargin=2.2em}

\newtheorem{theorem}{Theorem}[section]
\newtheorem{lemma}[theorem]{Lemma}

\theoremstyle{definition}
\newtheorem{definition}[theorem]{Definition}
\newtheorem{example}[theorem]{Example}
\theoremstyle{remark}

\crefname{theorem}{Theorem}{Theorems}
\crefname{lemma}{Lemma}{Lemmas}
\crefname{corollary}{Corollary}{Corollaries}
\crefname{construction}{Construction}{Constructions}
\crefname{definition}{Definition}{Definitions}
\crefname{example}{Example}{Examples}
\crefname{remark}{Remark}{Remarks}

\newcommand{\C}{\mathcal C}
\newcommand{\MFrm}{\mathbf{MFrm}}
\newcommand{\DFrm}{\mathbf{DFrm}}
\newcommand{\SDFrm}{\mathbf{SDFrm}}
\newcommand{\Null}{\mathcal N}
\newcommand{\Meet}{\mathbf{Meet}_{\top}}
\newcommand{\Sieve}{\mathscr S}
\newcommand{\Var}{\mathscr V}

\newcommand{\supp}[2][]{\lvert #2\rvert_{#1}}
\newcommand{\prin}[1]{\langle #1\rangle}
\newcommand{\Sat}{\mathscr L}
\newcommand{\Sub}{\mathrm{Sub}}
\newcommand{\Idl}{\mathrm{Idl}}
\newcommand{\CRing}{\mathbf{CRing}}

\title[Grothendieck Topologies Are Extensional Presentations]
{Grothendieck Topologies Are Extensional Presentations of the Form of Sieves}

\author{Roy Ferguson}
\address{Artificial Intelligence Research Unit, Department of Computer
Science, University of Cape Town, South Africa}
\email{roy.ferguson@uct.ac.za}

\author{Zurab Janelidze}\thanks{\textbf{AI Assistance Disclosure}. The second author used Chat GPT-6 Astra to assist with the literature review, research, and writing and editing of this paper. The mathematical work is human-driven and AI-assisted: the authors identified the central ideas, formulated the research direction, and decided on the course of research throughout engaging with the AI, which was used solely as a time-saving aid to support research progress and improve the efficiency of writing. All AI-assisted material was critically reviewed and verified by the authors, who take full responsibility for the paper’s content, accuracy, and integrity.}
\address{Mathematics Division, Stellenbosch University, South Africa;
National Institute for Theoretical and Computational Sciences (NITheCS)}
\email{zurab@sun.ac.za}

\date{First draft, September 2026}

\subjclass[2020]{Primary 18F10; Secondary 18A20, 18E10, 18F70, 06D22,
22A05, 20E18, 13J10}
\keywords{Grothendieck topology, form, distributivity form, indexed
join, strong distributivity, extensional presentation, closure operator, Lawvere--Tierney topology,
orthogonal factorization system, functorial topology, Hausdorff object,
non-Archimedean topological group, linearly topologized ring, profinite
topology}

\begin{document}

\begin{abstract}
A Grothendieck topology on a category determines both a subform of
covering sieves and a quotient form obtained by identifying locally
equivalent sieves.  We place these two constructions in a single short
exact sequence.  For this purpose we introduce distributivity forms:
indexed meet-semilattices equipped with distinguished indexed joins
over which finite indexed meets distribute, and a strong version in
which these joins also satisfy Beck--Chevalley.  Over a fixed base, the
resulting categories have all small limits and have kernels and
cokernels relative to the closed ideal of fibrewise constant-top
morphisms.  Their monomorphisms are the fibrewise injective morphisms,
and their relative cokernels, together with the top-reflecting
morphisms, form an orthogonal factorization system.  Cokernels compose
but need not be stable under pullback, whereas kernels need not compose.
We call a short exact sequence with prescribed middle term an
extensional presentation, and prove that Grothendieck topologies on a
category are precisely the extensional presentations of its maximally
distributive form of sieves.  Further applications recover universal
productive closure operators and Lawvere--Tierney topologies,
functorial non-Archimedean group topologies, and functorial linear
topologies on commutative rings.
\end{abstract}

\maketitle

\section*{Introduction}

A \emph{form} is a category equipped with an abstract data of structures at an object. These structures can be subobjects, quotients, a combination of the two (e.g., subquotients), sieves, and so no. A form abstracts the situations where such structures form posets at each object. A form is defined as a faithful amnestic functor to the base category, whose fibres are such posets. We refer to the elements in the fibres as \emph{clusters} of the form. We can also view forms as indexed posets, via a variant of the Grothendieck construction \cite[Section~B1.3]{JohnstoneElephant}.

Forms have been used to characterize cover relations,
relative subobject classes, factorization systems, and exactness
structures \cite{JanelidzeWeighillI,JanelidzeWeighillII,
JanelidzeWeighillIII}. Doctrines studied in categorical logic are also forms
\cite{LawvereAdjointness,Jacobs,JohnstoneElephant}.
Forms also provide a natural setting for a
unified treatment of categorical closure operators
\cite{DuckertsAntoineGranJanelidze}. Noetherian forms introduced in \cite{GoswamiJanelidze} 
(see also \cite{JanelidzeSubgroups,VanNiekerkBiproducts,
VanNiekerkConstruction,JanelidzeVanNiekerk}) offer a  
particularly instructive precedent. Normal and conormal clusters there
satisfy self-dual axioms, and the usual homomorphism and Noether
isomorphism theorems can consequently be obtained without building into
the language the asymmetry between subobjects and quotients. The fact that a topos has
a noetherian form (see \cite{JanelidzeVanNiekerk}) shows that this use of forms is not confined
to algebraic categories.

This paper grew out from one of the chapters of the PhD thesis of the first author
\cite{FergusonThesis}, where he characterizes Grothendieck topologies as
particular types of forms. There are two approaches here. On the one
hand, the covering sieves of a Grothendieck topology constitute a distinguished
subform of the form of all sieves. On the other hand, the covering
relation arising from a Grothendieck topology \cite[Chapter~III, Section~2]{MacLaneMoerdijk} can be used to order the
corresponding equivalence classes of sieves, which results in a form whose clusters are generalized subobjects determined by the topology. In both
cases, the characterization makes strong use of the form of sieves. In
fact, the form of sieves appears to be an extension of the two forms
arising in these characterizations. To make this observation precise,
we set out to uncover the category in which such extensions could be represented as short exact sequences. 

It turned out that the category we need is that of what we call in this paper
\emph{distributivity forms}. These are forms where all finite indexed meets exist and which come with a specified class of indexed joins such that the meet
distributes over them.  The same data can equivalently be
described in terms of initial and final lifts of a form seen as a  functor.
This connects distributivity forms with the theory of topological functors developed by Br\"ummer and others
\cite{Brummer,HerrlichTopological,BrummerHoffmann}. Both
descriptions are useful: lifts explain the categorical origin of the operations,
while indexed meets and joins provide a convenient calculus for them.
We call a distributivity \emph{strong} when it
also satisfy a suitable Beck--Chevalley condition. The distributivity law is a
Frobenius condition, controlling stability along vertical arrows in the
total category, while Beck--Chevalley controls stability along cartesian
arrows.  When the base category has pullbacks, the axioms of strong
distributivity make the families of total arrows occurring in the
selected indexed joins into a Grothendieck pretopology, and hence
generate a Grothendieck topology on the domain of the form viewed as a functor.

We show that over a fixed base category, the categories of distributivity forms and
of strong distributivity forms have all small limits.  Their
monomorphisms are precisely the fibrewise injective morphisms and are
therefore fibrewise order embeddings.  The morphisms which are
fibrewise constant at the top cluster form a closed ideal, although
there need not be such a morphism between every pair of forms.  Relative
to this ideal, kernels and cokernels exist. These categories are therefore semiexact in the sense of Grandis
\cite{GrandisRelative,GrandisBook}. They need not, however, be
homological. In both cases, cokernels are closed under composition but need not be
stable under pullback. Kernels,
in turn, need not be closed under composition. Also, while cokernels are part of a factorization system, this factorisation system is not proper. Overall, this means that the exactness structure of these categories is, unfortunately, not particularly well behaved. Yet, they are useful for solving the problem that we set out to solve.

By an \emph{extensional
presentation} of a distributivity form $\mathcal{F}$ we mean a short exact sequence in the category of distributivity forms, where $\mathcal{F}$ is the middle object. We prove that Grothendieck topologies on a category \(\C\) are in bijection with
extensional presentations of its maximally distributive form of sieves. The latter form arises as the extension of the two ways of viewing a Grothendieck topology as a form, discussed above. In particular, this means that Grothendieck topologies can be characterized as normal subobjects (and equivalently, normal quotients) of the form of sieves.

Next, we set out to explore if there are interesting extensional presentations of other forms, different from the form of sieves. We show that when \(\C\) has pullbacks, extensional presentations of the unary
distributivity on the form of variations classify singleton-generated
Grothendieck topologies.  For a locally small, well-powered geometric
category, extensional presentations of the geometric subobject form are
universal productive closure operators; in the presence of a subobject
classifier, these are the familiar Lawvere--Tierney topologies. Over
the terminal category, a distributivity form is a meet-semilattice
equipped with selected exact joins, and the theory connects with
semilattice sites, partial frames, filters, and fitted nuclei.

Further to the examples above, we show that extensional presentations
of the minimally distributive subgroup form are precisely functorial
non-Archimedean group topologies, including the profinite and pro-\(p\)
constructions.  Extensional presentations of the minimally distributive
ideal form are precisely functorial linear topologies on commutative
rings. These include profinite and adic examples, as well as principal
topologies attached to functorial ideal assignments such as the
nilradical.

The results of the present paper leaves one wondering whether the notion of an extensional presentation of a distributivity form is a useful generalization of the notion of a Grothendieck topology, which replaces the form of sieves with a general distributivity form. A good test of this usefulness would be development of sheaf theory relative to distributivity forms. This, however, falls beyond the scope of the present paper: we only make some preliminary steps in this direction at the very end of the paper.

The paper is organized as follows.  Section~1 introduces distributivity
forms and
studies strong and generated distributivities, explains how strong
distributivities produce topologies on their total categories, and gives
intrinsic characterizations of various relevant forms.  Section~2 introduces and studies the categories of distributivity forms over a base category. It also develops briefly a general theory of extensional
presentations. Section~3
proves the classification theorems and exhibits and studies various extensional presentations of familiar forms.
Section~4 makes some further remarks that could be of interest to the reader, but do not fall strictly within the main programme of the paper.

Throughout, the base category \(\C\) is assumed to be small.  As usual,
this assumption may instead be interpreted relative to a fixed choice
of universes.

\begingroup
\footnotesize
\tableofcontents
\endgroup

\section{Distributivity forms}
\label{sec:distributivity-forms}

\subsection{Indexed meets, indexed joins, and lifts}

We use the language of forms developed in
\cite{JanelidzeWeighillI,JanelidzeWeighillII}; see also
\cite{DuckertsAntoineGranJanelidze,JanelidzeVanNiekerk}.  A
\emph{form} over \(\C\) is a faithful amnestic functor
\[
                         p:\mathcal E\longrightarrow\C.
\]
Its objects are called \emph{clusters}.  The fibre \(F_X\) over an
object \(X\) is a poset: for \(A,B\in F_X\), we write \(A\leqslant B\)
when there is an arrow \(A\to B\) above \(1_X\).  More generally,
\(A\leqslant_fB\) means that the unique possible arrow \(A\to B\)
above \(f:pA\to pB\) exists.

We first recall the two lifting notions which organize the definition.  A
\emph{structured source} in \(p\), with base domain \(X\), consists of
clusters \(A_i\) and arrows
\[
                         f_i:X\longrightarrow pA_i.
\]
A lift of the source is a cluster \(A\in F_X\) with
\(A\leqslant_{f_i}A_i\) for every \(i\).  It is \emph{initial} when,
for every cluster \(B\) and every \(g:pB\to X\),
\[
 B\leqslant_g A
 \quad\Longleftrightarrow\quad
 B\leqslant_{f_i g}A_i\quad\text{for every }i.
\]
A \emph{structured sink} with base codomain \(X\) consists of clusters
\(A_i\) and arrows
\[
                         f_i:pA_i\longrightarrow X.
\]
A lift is a cluster \(U\in F_X\) with
\(A_i\leqslant_{f_i}U\) for every \(i\), and it is \emph{final} when,
for every cluster \(B\) and every \(g:X\to pB\),
\[
 U\leqslant_gB
 \quad\Longleftrightarrow\quad
 A_i\leqslant_{g f_i}B\quad\text{for every }i.
\]
These are the standard initial and final lifting conditions for a
faithful functor.  They originate in categorical topology; our use of
them follows in particular Br\"ummer's treatment of topological functors \cite{Brummer} (see also
\cite{HerrlichTopological,BrummerHoffmann,GarnerTopological}).  Cartesian
and cocartesian lifts are the singleton cases of initial and final lifts,
respectively.

Suppose that all finite structured sources have initial lifts.  The
empty source has an initial lift \(\top_X\), a singleton source
\(f:X\to Y\), \(B\in F_Y\), has an initial lift denoted \(f^*B\), and
the two-object source consisting of \(A,B\in F_X\), with both indexing
arrows equal to \(1_X\), has an initial lift \(A\wedge B\).  Thus the
fibres have top and binary meets, every
inverse image exists, and
\[
 f^*\top_Y=\top_X,
 \qquad
 f^*(A\wedge B)=f^*A\wedge f^*B.
\]
Conversely, these operations supply every finite initial lift, by
\[
              \bigwedge_i f_i^*A_i.
\]
Thus a finite initial lift is a \emph{finite indexed meet}.  Equivalently,
after choosing inverse images, the form is an indexed meet-semilattice
\begin{equation}\label{eq:indexed-meet-form}
                  F:\C^{\mathrm{op}}\longrightarrow\Meet,
\end{equation}
where \(\Meet\) denotes meet-semilattices with top and
finite-meet-preserving maps.  The ordered Grothendieck construction
recovers the total category: an arrow \((X,A)\to(Y,B)\) over
\(f:X\to Y\) exists precisely when \(A\leqslant f^*B\).  This is the
indexed-poset description of the same form, not additional structure. In general, even when inverse images do not exist, we can write the relation \(A\leqslant f^*B\) as \(A\leqslant_f B \) or equivalently, \(B\geqslant_f A \) and still think of a form as an indexed poset in a less convenional sense (see \cite{DuckertsAntoineGranJanelidze,JanelidzeVanNiekerk}).

For a structured sink, a final lift is equivalently the
least \(U\in F_X\) satisfying \(A_i\leqslant f_i^*U\) for every
\(i\).  We write it as
\[
             U=\nabla_i(A_i,f_i)
               =\bigvee_i{}^{f_i} A_i
\]
and call it an \emph{indexed join}.  When the singleton indexed join
along \(f\) exists it is the cocartesian direct image \(f_!A\), left
adjoint to \(f^*\); if all the relevant direct images exist, the formula
becomes \(\bigvee_i f_{i!}A_i\).  When all \(f_i\) are identities,
indexed meets and joins are ordinary meets and joins in one fibre.  The
whole displayed datum, not merely its apex \(U\), will be called a
\emph{final presentation}.  We use ``indexed join'' when emphasizing
the operation and ``final presentation'' when emphasizing its specified
source and universal property.

\begin{definition}\label{def:distributivity-form}
A \emph{distributivity form} over \(\C\) is a form in which every
finite structured source has an initial lift, together with a selected
class \(\mathcal P_F\) of small final presentations satisfying the
following conditions.
\begin{enumerate}[label=\textup{(D\arabic*)}]
\item Every unit presentation
\(A=\nabla(A,1_X)\) is selected, and selection is invariant under
bijective reindexing.
\item Selected final lifts compose: if
\[
 U=\nabla_i(A_i,f_i),
 \qquad
 A_i=\nabla_j(B_{ij},g_{ij})
\]
are selected, then so is
\[
                 U=\nabla_{i,j}(B_{ij},f_i g_{ij}).
\]
\item Finite indexed meets distribute over selected indexed joins.  In
the elementary notation above, if
\(U=\nabla_i(A_i,f_i)\) is selected and \(C\in F_X\), then
\[
 U\wedge C
   =\nabla_i(A_i\wedge f_i^*C,f_i)
\]
is again a selected final presentation.
\end{enumerate}
The selected class \(\mathcal P_F\) is called the
\emph{distributivity} of \(F\), and its members are called
\emph{admissible presentations}.
\end{definition}

Condition (D2) is substitution of indexed joins.  Its displayed equality
is forced by the two universal properties; the content is closure of
the selected class.  Condition (D3) is a Frobenius law.  Since every
finite initial lift is a finite meet of inverse images, it says exactly
that the chosen final operations distribute over the finite initial
operations at their common apex.

There is a second compatibility, independent of Frobenius.  Assume for
the moment that \(\C\) has pullbacks.  Given an admissible presentation
\(U=\nabla_i(A_i,f_i)\) and an arrow \(h:Y\to X\), form pullback squares
\[
\begin{array}{ccc}
Y_i&\xrightarrow{\ h_i\ }&X_i\\[-2pt]
{\scriptstyle f_i'}\!\downarrow&&\downarrow\!{\scriptstyle f_i}\\[-2pt]
Y&\xrightarrow{\ h\ }&X .
\end{array}
\]

\begin{definition}\label{def:strong-distributivity}
A distributivity is \emph{strong} when it also satisfies:
\begin{enumerate}[label=\textup{(D4)},leftmargin=4.2em]
\item the Beck--Chevalley transform of every admissible presentation is
again admissible, and
\begin{equation}\label{eq:beck-chevalley}
 h^*U=\nabla_i(h_i^*A_i,f_i')
       =\bigvee_i{}^{f_i'}h_i^*A_i.
\end{equation}
\end{enumerate}
A distributivity form equipped with a strong distributivity is a
\emph{strong distributivity form}.  If the base does not have all
pullbacks, (D4) is read for the displayed pullbacks which exist; a
pullback-refinement version may instead be used when a coverage, rather
than a pretopology, is desired.
\end{definition}

Frobenius and Beck--Chevalley have different roles.  Condition (D3)
pulls an admissible family back along a vertical arrow in the total
category, while (D4) pulls it back along a cartesian arrow.  Every arrow
in the total category factors as a vertical arrow followed by a
cartesian one, so together they express stability along arbitrary total
arrows.

\begin{definition}\label{def:exact-presentation}
A final presentation is \emph{exact} when all the presentations required
from it by (D3) exist and are final.  It is \emph{universally exact} when
it is exact and all its Beck--Chevalley transforms
\eqref{eq:beck-chevalley} exist and are exact.
\end{definition}

\begin{lemma}\label{lem:generated-distributivity}
Every class \(\mathcal A\) of exact final presentations is contained in
a least distributivity, obtained by closing \(\mathcal A\) under units,
bijective reindexing, substitution, and (D3).  Every class of universally
exact presentations is similarly contained in a least strong
distributivity, obtained by also closing under (D4).
\end{lemma}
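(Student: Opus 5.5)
The plan is the standard closure argument. First, the class of all distributivities containing \(\mathcal A\) is closed under intersection, since (D1)--(D3) are closure conditions on the selected class. So the least distributivity is the intersection of all distributivities containing \(\mathcal A\), provided at least one exists. The cleanest way to settle existence and the explicit description together is to build the closure directly and check that it lands inside a class of honest final presentations.

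Let \(\mathcal A'\) be the smallest class of final presentations containing \(\mathcal A\) that contains all unit presentations and is closed under bijective reindexing, substitution (D2) and the Frobenius operation (D3). Concretely, \(\mathcal A'=\bigcup_n\mathcal A_n\) with \(\mathcal A_0=\mathcal A\cup\{\text{units}\}\), and each \(\mathcal A_{n+1}\) obtained by applying one step of each operation to members of \(\mathcal A_n\). The key check is that every presentation produced by these operations is a final presentation, and in fact an exact one. Closure under (D3) is only meaningful if \(U\wedge C=\nabla_i(A_i\wedge f_i^*C,f_i)\) really is final.

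So I would prove by induction that the class \(\mathcal X\) of exact final presentations is closed under the four operations.

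\begin{enumerate}[label=(\roman*)]
\item \emph{Units.} A unit presentation \(A=\nabla(A,1_X)\) is final. Its (D3) transforms are \(\nabla(A\wedge C,1_X)\), which are units again, hence final.
\item \emph{Reindexing.} Invariance under bijective reindexing is immediate from the definitions.
\item \emph{Frobenius.} If \(U=\nabla_i(A_i,f_i)\) is exact, then \(U\wedge C\) is final by exactness. It is again exact because \((U\wedge C)\wedge D=U\wedge(C\wedge D)\) and \(f_i^*(C\wedge D)=f_i^*C\wedge f_i^*D\). So the (D3) transforms of \(U\wedge C\) are (D3) transforms of \(U\), and these are final.
\item \emph{Substitution.} Suppose \(U=\nabla_i(A_i,f_i)\) and \(A_i=\nabla_j(B_{ij},g_{ij})\) are final. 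Then for any \(g\) with codomain \(pB\),
\[
U\leqslant_g B \iff \forall i\; A_i\leqslant_{gf_i}B \iff \forall i,j\; B_{ij}\leqslant_{gf_ig_{ij}}B .
\]
So \(\nabla_{i,j}(B_{ij},f_ig_{ij})\) is final. For exactness, the (D3) transform of the composite by \(C\) equals the composite of the transform of \(U\) by \(C\) with the transforms of the \(A_i\) by \(f_i^*C\). This uses \(g_{ij}^*f_i^*C=(f_ig_{ij})^*C\), which holds up to the uniqueness of initial lifts. By the same computation, that composite is final.
\end{enumerate}

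Hence \(\mathcal A'\subseteq\mathcal X\), so \(\mathcal A'\) is a genuine class of final presentations satisfying (D1)--(D3). It is clearly contained in every distributivity containing \(\mathcal A\), so it is the least one.

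For the strong case, I would run the same argument with \(\mathcal X\) replaced by the class of universally exact presentations. I would then add closure under the Beck--Chevalley operation (D4), taking the pullbacks that exist, as in Definition~\ref{def:strong-distributivity}. The extra verifications are as follows.

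\begin{enumerate}[label=(\roman*)]
\item The (D4) transforms of a universally exact presentation are universally exact. This holds because pullbacks paste: \((hk)^*=k^*h^*\), and a transform of a transform is a transform along the composite. The existence and exactness of the transform itself are exactly what universal exactness provides.
\item Units are universally exact, since \(h^*\nabla(A,1)=\nabla(h^*A,1)\).
\item (D3) preserves universal exactness. This is because \(h^*(U\wedge C)=h^*U\wedge h^*C\) and the transform of \(U\wedge C\) along \(h\) is the (D3) transform by \(h^*C\) of the transform of \(U\).
\item Substitution preserves universal exactness. The transform of a composite is the composite of transforms, by pasting pullback squares over \(f_ig_{ij}\).
\end{enumerate}

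The main obstacle I expect is bookkeeping in the substitution steps. One has to identify the iterated pullbacks and iterated inverse images with those along the composites. This holds only up to the canonical isomorphisms, which become equalities here because the fibres are posets and the form is amnestic. One also needs the closure to require no choices beyond those already fixed by the universal properties.
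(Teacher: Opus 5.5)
Your argument follows the paper's route. The least distributivity is the closure of \(\mathcal A\) taken inside the class of exact presentations (respectively universally exact presentations), which is itself a distributivity (respectively a strong distributivity), and minimality does the rest. The paper only asserts that the exact and universally exact presentations form such classes. Your case checks actually supply that verification, and they are correct: units, the Frobenius step via \(f_i^*(C\wedge D)=f_i^*C\wedge f_i^*D\), substitution via \(g_{ij}^*f_i^*=(f_ig_{ij})^*\), and pasting of pullback squares for (D4).

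One claim needs correcting: the closure is not in general \(\bigcup_{n<\omega}\mathcal A_n\). Substitution may involve infinitely many inner presentations. Suppose \(U=\nabla_{i\in I}(A_i,f_i)\) lies in \(\mathcal A_0\) with \(I\) infinite, and the presentation of \(A_i\) first appears at stage \(n_i\) with the \(n_i\) unbounded. Then \(\nabla_{i,j}(B_{ij},f_ig_{ij})\) need not lie in any finite stage, so the \(\omega\)-union can fail (D2). This is exactly the caveat in the paper's proof that arbitrary small substitutions may require a transfinite closure.

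The repair costs nothing. Define \(\mathcal A'\) as the intersection of all classes of sink-with-apex data that contain \(\mathcal A\) and are closed under the operations; alternatively, iterate transfinitely, taking unions at limit stages. Then \(\mathcal X\) is one such class, so \(\mathcal A'\subseteq\mathcal X\) by minimality rather than by induction over finite stages. Nothing else in your proof uses the stage description, so the rest goes through unchanged, including in the strong case.
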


\begin{proof}
Intersections of classes satisfying (D1)--(D3), or (D1)--(D4), satisfy
the same conditions.  The class of all exact presentations is a
distributivity, and the class of all universally exact presentations is
a strong distributivity, so the relevant intersections exist.  This is
equivalently the closure obtained by iterating the displayed rules.
There is generally no fixed finite number of iterations: substitution
produces presentations of arbitrarily large finite depth.  For finitary
rules the union of the finite stages gives the closure; allowing
arbitrary small substitutions may require the usual transfinite closure.
\end{proof}

The selected presentations also define covering families.  Recall that
a Grothendieck topology assigns covering sieves, contains each maximal
sieve, and is stable under pullback and local refinement.  Equivalently,
a system of identity and composite covering families which is stable
under pullback is a Grothendieck pretopology; it generates a
Grothendieck topology.

\begin{theorem}\label{thm:strong-total-site}
Let \(\C\) have pullbacks and let \(p:\mathcal E\to\C\) be a strong
distributivity form.  The total arrows
\[
                  A_i\longrightarrow U
\]
belonging to its admissible final presentations form a Grothendieck
pretopology on \(\mathcal E\), and hence generate a Grothendieck topology
there.  This pretopology is subcanonical precisely when the base family
\((f_i:pA_i\to pU)_i\) of every admissible presentation covers for the
canonical topology of \(\C\).
\end{theorem}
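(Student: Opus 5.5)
The plan is to check the three pretopology axioms for the class \(\mathcal K\) of families \((A_i\to U)_i\) of total arrows underlying admissible presentations, and then to treat subcanonicity separately.

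First I will describe pullbacks in \(\mathcal E\). Given \(A\to U\) over \(f:X'\to X\) and \(B\to U\) over \(h:Y\to X\), take the pullback \(Y'=X'\times_XY\) in \(\C\), with projections \(h':Y'\to X'\) and \(f':Y'\to Y\). The cluster \(h'^*A\wedge f'^*B\in F_{Y'}\) is the initial lift of the two-arrow source. Its universal property, together with faithfulness of \(p\), shows that it is the pullback in \(\mathcal E\). So \(\mathcal E\) has pullbacks and \(p\) preserves them.

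\emph{Stability under pullback.} Let \(U=\nabla_i(A_i,f_i)\) be admissible and let \(B\to U\) lie over \(h\), so \(B\leqslant h^*U\). By (D4), \(h^*U=\nabla_i(h_i^*A_i,f_i')\) is admissible. Applying (D3) with \(C=B\) gives that
\[
B=B\wedge h^*U=\nabla_i\bigl(h_i^*A_i\wedge f_i'^*B,\,f_i'\bigr)
\]
is admissible. By the description of pullbacks above, its total arrows are exactly the pullbacks of the \(A_i\to U\) along \(B\to U\).

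\emph{Identities and isomorphisms.} Identities cover by (D1). If isomorphisms are required, I will read (D4) for an arbitrary pullback square, not only a chosen one. Pull back the unit presentation \(U=\nabla(U,1_X)\) along \(h=1_X\), using the square whose legs are both an isomorphism \(\varphi:Y\to X\). This yields the admissible presentation \(U=\nabla(\varphi^*U,\varphi)\), whose arrow is the general isomorphism into \(U\). Otherwise I will simply note that identities generate the same topology.

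\emph{Transitivity.} This is precisely (D2). The ``hence generate'' clause is then the standard passage from a pretopology to the topology of sieves containing a covering family.

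For subcanonicity I will compare effective-epimorphic families in \(\mathcal E\) and in \(\C\).

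For the direction ``subcanonical \(\Rightarrow\) base covers canonically'', let \(Z\) be any object of \(\C\). Test \(\mathcal E\)-compatible families against the top cluster \(\top_Z\). Every arrow \(pA_i\to Z\) lifts to \(A_i\to\top_Z\), and compatibility in \(\mathcal E\) reduces to compatibility in \(\C\). So the sheaf condition for the representable at \(\top_Z\) yields unique amalgamation in \(\C\) for every admissible base family. By (D4), base changes of admissible families are again admissible, so every base change is effective-epimorphic as well, and the base family is universally effective-epimorphic.

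For the converse, suppose the base family is universally effective-epimorphic. Take a compatible family of total arrows \(A_i\to B\) over \(g_i\). If the \(g_i\) agree on the \(\C\)-pullbacks \(pA_i\times_XpA_j\), then they glue uniquely to some \(g:X\to pB\). Finality of \(U=\nabla_i(A_i,f_i)\) then gives \(U\leqslant_gB\), because \(A_i\leqslant_{gf_i}B\) for every \(i\). Uniqueness follows from faithfulness of \(p\). Universality again comes from (D4).

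The main obstacle is that compatibility in \(\mathcal E\) is tested only over the cluster \(f_{ij}^*A_i\wedge f_{ji}^*A_j\) on \(pA_i\times_XpA_j\), not over \(\top\). This is weaker than compatibility in \(\C\) unless that cluster is, in a suitable sense, dense. To close the gap I would first try to use the admissible presentation of \(U\) pulled back to \(pA_i\times_XpA_j\) via (D4) and cut down via (D3). The aim is to show that agreement on the meet cluster, combined with the base family being canonical, already forces agreement of the \(g_i\) on the whole pullback. If this fails, the statement should be read with ``covers'' interpreted for the families of the form, and I would adjust the comparison accordingly.
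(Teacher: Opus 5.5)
Your route is the paper's: (D1) for identities, (D2) for transitivity, (D4) followed by (D3) for pullback stability along an arbitrary total arrow, top clusters for necessity, and gluing in \(\C\) followed by finality for sufficiency. Your pullback computation in \(\mathcal E\), your treatment of isomorphisms, and your use of (D4) to upgrade ``effective-epimorphic'' to ``universally effective-epimorphic'' in the necessity direction are correct. The last point is more explicit than the paper, which leaves it implicit.

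The one place your proposal stops short is the sufficiency direction, and the obstacle you name there does not exist. You showed yourself that the pullback of \(A_i\to U\) and \(A_j\to U\) in \(\mathcal E\) is the cluster \(\pi_1^*A_i\wedge\pi_2^*A_j\) over the whole object \(pA_i\times_X pA_j\), with projections lying over \(\pi_1,\pi_2\). An arrow out of this cluster is not a restriction to some part of \(pA_i\times_XpA_j\). A cluster is extra structure on the same base object, so such an arrow is an arrow out of all of \(pA_i\times_XpA_j\) satisfying a lifting inequality. Since \(p\) is faithful, two such arrows coincide exactly when their underlying base arrows coincide.

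Hence \(\mathcal E\)-compatibility of \(x_i:A_i\to B\) over \(g_i\) says precisely \(g_i\pi_1=g_j\pi_2\) in \(\C\). It is the same condition as \(\C\)-compatibility of \((g_i)\), and how small that pullback cluster is (it could even be a least cluster) plays no role. In fact you already used this equivalence when testing against \(\top_Z\) in the necessity direction.

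With this observation your sufficiency argument closes exactly as in the paper:
\begin{enumerate}
\item glue the \(g_i\) to a unique \(g\) with \(gf_i=g_i\);
\item lift \(g\) to an arrow \(U\to B\) by finality of \(U=\nabla_i(A_i,f_i)\), since \(A_i\leqslant_{gf_i}B\) for every \(i\);
\item obtain uniqueness from faithfulness of \(p\).
\end{enumerate}
No universality is needed in this direction, because the admissible families are already pullback-stable. The proposed density repair via (D3)/(D4) is unnecessary. Drop the fallback of reinterpreting ``covers'' as well: the statement holds as written.
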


\begin{proof}
Condition (D1) gives identity covers and (D2) gives their substitution.
By the factorization of total arrows just described, (D3) and (D4) give
pullback stability.  Thus the families form a pretopology.  Finality
says that, once a common base map out of \(pU\) is given, it lifts out of
\(U\) exactly when its composites lift out of every \(A_i\).  Descent
for representables therefore reduces to unique gluing of those base
maps, which is exactly the canonical-cover condition on \((f_i)_i\).
Necessity may already be tested against the top clusters, since
\[
 \operatorname{Hom}_{\mathcal E}(A,\top_Y)
     \cong\operatorname{Hom}_{\C}(pA,Y).
\]
\end{proof}

Thus strong distributivity supplies a Grothendieck topology on the
domain of the form, but not automatically a subcanonical one.  For
example, for the identity form on a category with pullbacks and the
largest possible class of presentations, (D1)--(D4) are automatic,
whereas declaring every family covering need not be subcanonical.

The distinction between a join and its presentation is important.  A
cluster may have several presenting sinks, and morphisms introduced in
Section~\ref{sec:extensional-presentations} will be required to preserve each selected
sink, not only the equality of its apex with some join.

\subsection{The form of sieves}

Let \(\Sieve_X\) be the poset of sieves on \(X\), ordered by inclusion.
For \(f:X\to Y\), set
\[
 f^*S=\{g:Z\to X\mid fg\in S\}.
\]
The maximal sieve is the top cluster, intersections are meets, and the
displayed operation is inverse image.  These data form the
\emph{form of sieves} \(\Sieve\) over \(\C\).

For a sieve \(R\) on \(Y\) and an arrow \(f:Y\to X\), write
\[
                         fR=\{fg\mid g\in R\}.
\]
This is a sieve on \(X\).

\begin{lemma}\label{lem:sieve-distributivity}
Every structured sink in \(\Sieve\) has a final lift, given by
\begin{equation}\label{eq:sieve-final-lift}
                   \nabla_i(S_i,f_i)=\bigcup_i f_iS_i.
\end{equation}
Taking all these final presentations makes \(\Sieve\) a distributivity
form.  If \(\C\) has pullbacks, this maximal distributivity is strong.
\end{lemma}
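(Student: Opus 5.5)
The plan is to verify the three assertions of the lemma in order, working directly with the elementary descriptions: \(f^*S=\{g\mid fg\in S\}\), meets are intersections, and \(A\leqslant_f B\) means \(A\subseteq f^*B\). First, for the final lift I will check that \(U=\bigcup_i f_iS_i\) is a lift and is the least one. It is a sieve, since each \(f_iS_i\) is closed under precomposition (\(f_ig\circ k=f_i(gk)\) with \(gk\in S_i\)) and a union of sieves is a sieve. For \(g\in S_i\) we have \(f_ig\in U\), so \(S_i\subseteq f_i^*U\). Next I verify finality in the stated form: for \(h:X\to pB\), \(U\subseteq h^*B\) iff \(hf_ig\in B\) for all \(i\) and \(g\in S_i\), iff \(S_i\subseteq (hf_i)^*B\) for all \(i\). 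Since \(U\) lies in \(F_X\), this is exactly the displayed condition on structured sinks. Arrows \(g:pB'\to X\) with general domain are not needed here, because finality only quantifies over maps out of \(X\).

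Second, taking \(\mathcal P\) to be all final presentations, I will check (D1)--(D3). For (D1), \(\nabla(S,1_X)=S\), and a union is invariant under reindexing. For (D2), substitution follows from
\[
\bigcup_i f_i\Bigl(\bigcup_j g_{ij}B_{ij}\Bigr)=\bigcup_{i,j}f_ig_{ij}B_{ij},
\]
because composing on the left with \(f_i\) commutes with unions. The substance is (D3), which I expect to be the only real calculation. I need
\[
\Bigl(\bigcup_i f_iS_i\Bigr)\cap C=\bigcup_i f_i\bigl(S_i\cap f_i^*C\bigr).
\]
The inclusion \(\supseteq\) is clear. Conversely, an element \(f_ig\) with \(g\in S_i\) that also lies in \(C\) satisfies \(g\in f_i^*C\), so it appears on the right. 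The presentation on the right is final by the first part, so it is admissible.

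Third, assuming pullbacks, I will check (D4): for \(h:Y\to X\), I need \(h^*\bigcup_i f_iS_i=\bigcup_i f_i'h_i^*S_i\). For \(\supseteq\), if \(k\in h_i^*S_i\) then \(hf_i'k=f_ih_ik\) with \(h_ik\in S_i\). For \(\subseteq\), suppose \(hk=f_ig\) with \(g\in S_i\). The pullback property gives \(m\) with \(f_i'm=k\) and \(h_im=g\), so \(m\in h_i^*S_i\) and \(k=f_i'm\). The resulting presentation is again final by the first part, so it is admissible. This use of the pullback's universal property is the one point where the hypothesis matters. I expect no genuine obstacle, but the step needing care is keeping the quantifier in the finality condition over maps out of \(X\), which the lemma's definition supplies.
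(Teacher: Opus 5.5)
Your proposal is correct and follows the paper's own proof: the union is the least sieve bounding the sink, (D1)--(D2) reduce to identities and associativity, (D3) is the set identity \(C\cap\bigcup_i f_iS_i=\bigcup_i f_i(S_i\cap f_i^*C)\), and (D4) is the identity \(h^*\bigl(\bigcup_i f_iS_i\bigr)=\bigcup_i f_i'h_i^*S_i\), with admissibility automatic in each case because every final presentation is selected. You give more detail than the paper, namely the direct finality check against all \(h:X\to pB\) and the use of the pullback's universal property for the inclusion \(\subseteq\) in (D4).
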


\begin{proof}
A sieve \(T\) is an upper bound of the structured sink precisely when
\(f_iS_i\subseteq T\) for every \(i\), so
\eqref{eq:sieve-final-lift} is its final lift.  Units and substitution
follow from identities and associativity of composition.  The
distributivity law is the set-theoretic identity
\[
 C\cap\bigcup_i f_iS_i
   =\bigcup_i f_i(S_i\cap f_i^*C).
\]
For a pullback square as in Definition
\ref{def:strong-distributivity}, inverse image of a generated sieve gives
\[
 h^*\!\left(\bigcup_i f_iS_i\right)
   =\bigcup_i f_i'h_i^*S_i,
\]
which proves (D4).
\end{proof}

We call this choice the \emph{maximal distributivity} of \(\Sieve\).
Its final lifts are the basic mechanism by which the local character of
a Grothendieck topology will become an exactness statement.

For a cluster \(A\in F_X\) of any form with finite indexed meets, put
\begin{equation}\label{eq:support-sieve-early}
 \supp[F]{A}=\{f:Y\to X\mid f^*A=\top_Y\}.
\end{equation}
This is a sieve, called the \emph{support} of \(A\).  The assignments
\(A\mapsto\supp[F]{A}\) preserve top, finite meets, and inverse images,
and therefore define a morphism of the underlying indexed meet-posets
\[
                     \sigma_F:F\longrightarrow\Sieve.
\]

\begin{theorem}\label{thm:characterize-sieve-form}
Let \(F\) be a form over \(\C\) with finite indexed meets.
\begin{enumerate}[label=\textup{(\roman*)}]
\item The underlying form \(F\) is isomorphic to the form of sieves if
and only if clusters are determined by their supports and every sieve
is the support of a cluster; equivalently, \(\sigma_F\) is fibrewise
bijective.
\item A distributivity form \(F\) is isomorphic to the maximally
distributive form of sieves if and only if the conditions in \textup{(i)}
hold and every structured sink has an admissible final lift.
\end{enumerate}
When \(\C\) has pullbacks, the object in \textup{(ii)} is a strong
distributivity form.
\end{theorem}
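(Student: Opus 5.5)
The plan is to argue at the level of fibrewise posets, using that forms with finite indexed meets are indexed meet-semilattices and that \(\sigma_F\) is a morphism of these.

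For (i), suppose first that \(F\cong\Sieve\) as forms. In \(\Sieve\) the support of a sieve \(S\) is \(S\) itself, since \(f^*S=\top_Y\) exactly when \(1_Y\in f^*S\), that is, when \(f\in S\). So \(\sigma_{\Sieve}\) is the identity. The support is defined purely from top clusters and inverse images, both determined by the form structure, so it is transported along isomorphisms. Hence \(\sigma_F\) is fibrewise bijective.

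Conversely, suppose \(\sigma_F\) is fibrewise bijective. It preserves top, binary meets and inverse images, and in particular it is monotone. We must also show it reflects order, so that it is a fibrewise order isomorphism. If \(\supp[F]{A}\subseteq\supp[F]{B}\), then \(\supp[F]{A\wedge B}=\supp[F]{A}\cap\supp[F]{B}=\supp[F]{A}\). Injectivity then gives \(A\wedge B=A\), hence \(A\leqslant B\). Next we compare total arrows. An arrow \(A\to B\) over \(f\) exists iff \(A\leqslant f^*B\). By order-reflection this holds iff \(\supp{A}\subseteq\supp{f^*B}=f^*\supp{B}\), which is exactly the arrow condition in \(\Sieve\). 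So \(\sigma_F\) defines a functor over \(\C\) that is bijective on clusters and on arrows, hence an isomorphism of forms. This step, reflection of order, is the only non-formal point, and it is settled by the meet trick above.

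For (ii), suppose \(F\cong\Sieve\) with the maximal distributivity. Then (i) applies to the underlying form, and by Lemma~\ref{lem:sieve-distributivity} every structured sink has a final lift, which is selected. Conversely, assume the conditions of (i) and that every structured sink has an admissible final lift. By (i) we have an isomorphism of underlying forms with \(\Sieve\). Final lifts are defined purely in terms of the form, so the isomorphism transports final presentations to final presentations. The admissible class of \(F\) is thus carried into the class of all final presentations of \(\Sieve\), which is the maximal distributivity by Lemma~\ref{lem:sieve-distributivity}.

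For the reverse inclusion, note that a final lift is unique, being a least upper bound in a poset. A final presentation is the whole sink together with its apex. Every structured sink in \(F\) has some admissible final lift, and that lift is the unique final lift of the sink. Hence every final presentation of \(F\) is admissible, and the two distributivities correspond exactly.

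The final sentence, that the object in (ii) is strong when \(\C\) has pullbacks, follows from the last clause of Lemma~\ref{lem:sieve-distributivity} transported along the isomorphism.

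The main subtlety I expect in (ii) is this: the distributivity is a class of presentations rather than of joins, so we must check that admissibility is a property of the sink. Uniqueness of final lifts gives exactly this.
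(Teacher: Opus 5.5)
Your proposal is correct and takes the same route as the paper. You show that \(\sigma_F\) is an isomorphism of underlying forms exactly when it is fibrewise bijective, transport the distributivity along it, and use uniqueness of final lifts to see that admissibility of every structured sink forces the maximal class. You also fill in what the paper leaves implicit: order reflection via \(\supp[F]{A\wedge B}=\supp[F]{A}\cap\supp[F]{B}\) (the same meet trick used in Theorem~\ref{thm:limits-and-monomorphisms}), the comparison of total arrows, and the observation that admissibility is a property of the sink.
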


\begin{proof}
For sieves, \(f^*S=\top\) holds exactly when \(f\in S\); hence the
support map of \(\Sieve\) is the identity.  In general,
\[
 \supp[F]{A\wedge B}=\supp[F]{A}\cap\supp[F]{B},\qquad
 \supp[F]{g^*A}=g^*\supp[F]{A},
\]
and top has maximal support.  Thus \(\sigma_F\) is a morphism of the
underlying forms.  It is an isomorphism exactly under the two conditions
in \textup{(i)}.  An isomorphism of forms preserves and reflects final
lifts.  Consequently its distributivity is transported to the maximal
one exactly when every structured sink is admissible.  The last
assertion is Lemma~\ref{lem:sieve-distributivity}.
\end{proof}

\subsection{The two forms determined by a Grothendieck topology}

Recall that a Grothendieck topology \(J\) assigns to each \(X\) a
collection \(J(X)\) of covering sieves such that the maximal sieve
covers, inverse images of covering sieves cover, and the following
locality condition holds: if \(B\in J(X)\) and \(f^*R\) covers for
every \(f\in B\), then \(R\in J(X)\).  Covering sieves are upward
closed and closed under finite intersections.

The topology first determines the \emph{covering form} whose fibre over
\(X\) is \(J(X)\).  Its distributivity is induced from \(\Sieve\): a
presentation
\[
                         U=\bigcup_i f_iS_i
\]
is admissible when all the source sieves \(S_i\) and the apex \(U\)
are covering.

There is also a quotient-shaped form.  Following the covering-relation
formulation of Mac~Lane and Moerdijk
\cite[Chapter~III, Section~2]{MacLaneMoerdijk}, say that a sieve \(S\)
on \(X\) \emph{covers} an arrow \(f:Y\to X\) when \(f^*S\in J(Y)\).
The sieve of arrows covered by \(S\) is
\[
 j_{J,X}(S)=\{f:Y\to X\mid f^*S\in J(Y)\}.
\]
Two sieves are \emph{locally equivalent} when they cover the same
arrows, or equivalently when their images under \(j_J\) agree.  A
sieve fixed by \(j_J\) will be called \(J\)-\emph{saturated}.  Write
\(\Sat_J(X)\) for the poset of saturated sieves on \(X\).

\begin{theorem}\label{thm:topology-gives-two-forms}
Every Grothendieck topology \(J\) on \(\C\) determines two
distributivity forms over \(\C\):
\begin{enumerate}[label=\textup{(\roman*)}]
\item the covering form \(J\), with the distributivity induced from
the maximal distributivity of \(\Sieve\);
\item the saturation form \(\Sat_J\), whose admissible final lifts are
\begin{equation}\label{eq:saturation-final-lift}
 \nabla_i(T_i,f_i)
   =j_J\!\left(\bigcup_i f_iT_i\right),
 \qquad T_i\in\Sat_J(X_i).
\end{equation}
\end{enumerate}
Each local-equivalence class of sieves has the largest representative
\(j_J(S)\), and these representatives are precisely the clusters of
\(\Sat_J\).  If \(\C\) has pullbacks, both distributivity forms are
strong.
\end{theorem}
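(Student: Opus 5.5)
We first establish the standard properties of the operator \(j=j_J\), and only then verify the form axioms for \(J\) and for \(\Sat_J\).

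\textbf{Properties of \(j\).} We need the following facts.
\begin{itemize}
\item \(j\) is a closure operator: \(S\subseteq j(S)\), monotone, and idempotent. Here \(S\subseteq j(S)\) because \(f\in S\) gives \(f^*S\) maximal. Idempotence follows from locality: if \(f\in j(j(S))\), then \(f^*j(S)\) covers, and for each \(g\in f^*j(S)\) we have \(g^*f^*S\) covering, so \(f^*S\) covers.
\item \(j\) commutes with inverse images: \(j(h^*S)=h^*j(S)\), directly from the definition.
\item \(j\) preserves binary meets, since \(J(Y)\) is closed under finite intersections and upward closed.
\item \(S\) and \(j(S)\) are locally equivalent, because \(j(j(S))=j(S)\).
\end{itemize}
It follows that any sieve locally equivalent to \(S\) lies in \(j(S)\), so \(j(S)\) is the largest representative of its class. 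It also follows that the saturated sieves are exactly the images of \(j\). Consequently \(\Sat_J(X)\) contains the maximal sieve, is closed under \(\cap\) and under \(h^*\), and so has finite indexed meets computed as in \(\Sieve\).

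\textbf{Saturation form.} The sieve \(j(\bigcup_i f_iT_i)\) is the final lift in \(\Sat_J\). Indeed, for saturated \(T\) we have \(\bigcup_i f_iT_i\subseteq T\) if and only if \(j(\bigcup_i f_iT_i)\subseteq T\), and the former is the upper-bound condition. The axioms are then checked as follows.
\begin{itemize}
\item (D1): \(j(T)=T\) for saturated \(T\).
\item (D2): the identity \(j(\bigcup_i f_i\, j(\bigcup_j g_{ij}T_{ij}))=j(\bigcup_{i,j} f_ig_{ij}T_{ij})\). To prove it, note that \(f\,j(R)\subseteq j(fR)\): if \(g\in j(R)\), then \(g^*f^*(fR)\supseteq g^*R\) covers. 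Apply monotonicity and idempotence.
\item (D3): \(C\cap j(\bigcup f_iT_i)=j(C\cap\bigcup f_iT_i)=j(\bigcup f_i(T_i\cap f_i^*C))\). This uses meet preservation of \(j\), saturation of \(C\), and the set-theoretic identity from Lemma~\ref{lem:sieve-distributivity}.
\item (D4), under pullbacks: \(h^*j(\bigcup f_iT_i)=j(h^*\bigcup f_iT_i)=j(\bigcup f_i'h_i^*T_i)\), by commutation of \(j\) with pullback and the sieve computation in Lemma~\ref{lem:sieve-distributivity}.
\end{itemize}

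\textbf{Covering form.} The fibres \(J(X)\) contain the top, are closed under \(\cap\) and \(h^*\), and inherit the order from \(\Sieve\). Given covering \(S_i\) and a covering \(U=\bigcup_i f_iS_i\), \(U\) is the final lift in \(J(X)\): it is least among all sieve upper bounds, and it lies in \(J(X)\). The axioms are checked as follows.
\begin{itemize}
\item (D1) and (D2) are inherited from \(\Sieve\), since all sources and apices involved are covering by hypothesis.
\item (D3): the sources \(S_i\cap f_i^*C\) cover, and the apex \(U\cap C\) covers by closure of \(J\) under finite intersections.
\item (D4): \(h_i^*S_i\) and \(h^*U\) cover by pullback stability.
\end{itemize}

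The main obstacle is the bookkeeping in (D2) and (D3) for \(\Sat_J\). The inclusion \(f\,j(R)\subseteq j(fR)\) and the meet preservation of \(j\) must be proved carefully via locality; everything else is a direct transfer from Lemma~\ref{lem:sieve-distributivity}.
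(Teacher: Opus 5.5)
Your proposal is correct and follows essentially the same route as the paper. You establish extensivity, idempotence via locality, meet preservation and base change for \(j_J\), and use the inclusion \(f\,j_J(R)\subseteq j_J(fR)\) (the nontrivial half of the paper's push identity) for substitution. You then transfer (D3) and (D4) from the sieve calculus, while the covering form inherits everything from \(\Sieve\). Your direct unfolding of locality for idempotence is the same argument as the paper's ``\(j_J(S)\) covers iff \(S\) covers, applied after inverse images.''
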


\begin{proof}
The usual consequences of the Grothendieck topology axioms show that
the covering sieves contain top and are closed under inverse image and
finite intersection.  They therefore admit all finite initial lifts.
An admissible presentation induced from \(\Sieve\) remains a final lift
inside \(J\), since its apex is already the least upper bound among all
sieves.  Conditions (D1) and (D2) are inherited immediately.  For
(D3), if \(C\) and all \(S_i\) cover, then
\(S_i\cap f_i^*C\) covers; if the original apex \(U\) covers, then so
does \(U\cap C\).  Hence the distributive presentation from
Lemma~\ref{lem:sieve-distributivity} remains internal to \(J\).

For the second construction, the operators \(j_{J,X}\) are extensive
and monotone.  They preserve binary intersections because a finite
intersection of sieves covers exactly when both factors cover.  Directly
from the definition,
\begin{equation}\label{eq:saturation-base-change}
              f^*j_{J,Y}(S)=j_{J,X}(f^*S).
\end{equation}
Moreover, \(j_J(S)\) covers if and only if \(S\) covers.  One direction
uses extensivity and upward closure; the other is precisely locality,
applied to the covering sieve \(j_J(S)\) and the local covers which
define it.  Applying this observation after every inverse image proves
idempotence.  Thus the fixed points contain top and are closed under
finite intersections and inverse images, so they admit all finite
initial lifts.

For later use, observe the local push identity
\begin{equation}\label{eq:push-saturation}
                 j_J(fR)=j_J(f\,j_J(R)).
\end{equation}
Indeed, if \(g\in j_J(R)\), then \((fg)^*(fR)\) contains the covering
sieve \(g^*R\), and hence \(fg\in j_J(fR)\).  This gives the
nontrivial inclusion; the other follows from \(R\subseteq j_J(R)\).

The right-hand side of \eqref{eq:saturation-final-lift} is the least
saturated sieve containing every \(f_iT_i\), and is therefore the
stated final lift.  Unit presentations follow from idempotence.
Substitution follows by repeated use of \eqref{eq:push-saturation}.
Finally, if \(C\) is saturated, finite-meet preservation of \(j_J\),
\eqref{eq:saturation-base-change}, and the distributivity identity for
sieves give
\[
 \begin{aligned}
 j_J\!\left(\bigcup_i f_iT_i\right)\cap C
 &=j_J\!\left(C\cap\bigcup_i f_iT_i\right)\\
 &=j_J\!\left(\bigcup_i
       f_i(T_i\cap f_i^*C)\right).
 \end{aligned}
\]
This is (D3).  Thus \(\Sat_J\) is a distributivity form.

If \(\C\) has pullbacks, admissible presentations in the covering form
are stable under the Beck--Chevalley formula of Lemma
\ref{lem:sieve-distributivity}, since covering sieves are stable under
inverse image.  For saturated sieves, the same formula followed by
\eqref{eq:saturation-base-change} gives
\[
 h^*j_J\!\left(\bigcup_i f_iT_i\right)
  =j_J\!\left(\bigcup_i f_i'h_i^*T_i\right).
\]
Thus both distributivities satisfy (D4).

Extensivity gives \(S\subseteq j_J(S)\).  If \(T\) is saturated and
\(S\) is locally equivalent to \(T\), then
\(j_J(S)=j_J(T)=T\); hence \(j_J(S)\) is the largest member of the
class of \(S\), and every saturated sieve occurs in this way.
\end{proof}

The preceding construction admits intrinsic converses.  They are useful
because neither description mentions an ambient short exact sequence.

\begin{theorem}\label{thm:characterize-GT-forms}
Let \(F\) be a form over \(\C\) with finite indexed meets.
\begin{enumerate}[label=\textup{(\roman*)}]
\item The form \(F\) is isomorphic to the covering form of a unique
Grothendieck topology if and only if \(\sigma_F\) is fibrewise injective
and its image is upward closed in each sieve fibre and satisfies
locality: whenever \(B\) lies in the image over \(X\) and \(f^*R\) lies
in the image for every \(f\in B\), the sieve \(R\) lies in the image.
\item The form \(F\) is isomorphic to the saturation form of a unique
Grothendieck topology if and only if \(\sigma_F\) is fibrewise injective
and its image is a reflective sub-indexed-poset of \(\Sieve\), with
reflectors
\[
                 j_X:\Sieve_X\longrightarrow\sigma_F(F_X)
\]
which preserve finite meets and commute with inverse images.
\end{enumerate}
In \textup{(i)}, the corresponding distributivity-form
characterization is obtained by requiring the admissible final
presentations to be exactly those maximal sieve presentations whose
sources and apex belong to the image.  In \textup{(ii)}, it is obtained
by requiring every structured sink to be admissible and its indexed
join to have the form
\[
       \bigvee_i{}^{f_i}T_i
         =j_X\!\left(\bigcup_i f_iT_i\right).
\]
These are strong distributivity-form characterizations whenever \(\C\)
has pullbacks.
\end{theorem}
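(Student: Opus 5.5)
In both parts I would work through the support morphism \(\sigma_F:F\to\Sieve\). It preserves top, finite meets and inverse images (as shown in the proof of Theorem~\ref{thm:characterize-sieve-form}). So when it is fibrewise injective it is an isomorphism of indexed meet-posets onto its image. The image is a sub-indexed-meet-semilattice of \(\Sieve\): it contains the maximal sieves and is closed under intersection and inverse image. Each part therefore reduces to deciding when a sub-indexed-meet-semilattice \(I\subseteq\Sieve\) equals \(J(-)\), respectively \(\Sat_J(-)\), for some topology \(J\). Uniqueness of \(J\) is read off from \(I\) in both cases.

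For (i), necessity is immediate. The covering form of \(J\) is a subform of \(\Sieve\) with \(\sigma\) the inclusion: a covering sieve \(S\) has \(f^*S=\top\) iff \(f\in S\), so its support is itself. Its image \(J(-)\) is upward closed and local by the axioms. For sufficiency, set \(J(X)=\sigma_F(F_X)\). This contains the maximal sieve and is closed under inverse images. Locality is the hypothesis, and upward closure is also assumed. So \(J\) is a Grothendieck topology, and \(\sigma_F\) exhibits \(F\cong J\) as forms. Uniqueness holds because \(J(X)\) is forced to be the image.

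For (ii), necessity again uses the inclusion \(\Sat_J\subseteq\Sieve\). Saturated sieves contain top, hence are their own supports. The reflector is \(j_J\), and Theorem~\ref{thm:topology-gives-two-forms} gives the facts needed: \(j_J\) is extensive, idempotent, preserves meets, and satisfies \eqref{eq:saturation-base-change}. For sufficiency, given reflectors \(j_X\) with the stated properties, define
\[
J(X)=\{S\in\Sieve_X\mid j_X(S)=\top_X\}.
\]
This is the main step. Pullback stability follows from \(j\) commuting with \(f^*\). The maximal sieve covers because \(\top\) lies in the image. Upward closure follows from monotonicity.

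Locality needs care. Suppose \(B\in J(X)\) and \(f^*R\in J\) for all \(f\in B\). Then \(f^*j(R)=j(f^*R)=\top\) for each such \(f\), so \(f\in j(R)\) and hence \(B\subseteq j(R)\). This gives \(\top=j(B)\subseteq j(j(R))=j(R)\), so \(R\in J(X)\).

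Next I identify the image with \(\Sat_J\). We have \(f\in j(S)\) iff \(f^*j(S)=\top\) iff \(j(f^*S)=\top\) iff \(f^*S\in J\). Hence \(j=j_J\), and the image is exactly the fixed points of \(j_J\). Uniqueness holds because \(J\) is recovered as the sieves whose saturation is top, so \(J\) is determined by the reflective subposet. I expect this verification, showing that the abstract reflector coincides with \(j_J\), to be the only nonroutine point.

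Finally, the distributivity clauses. A form isomorphism preserves and reflects final lifts, so admissible classes transport along \(\sigma_F\). In (i) the prescribed class is exactly the induced distributivity of the covering form. In (ii), every sink admissible with join \(j_X(\bigcup_i f_iT_i)\) is exactly \eqref{eq:saturation-final-lift}. The strong clauses under pullbacks then follow from Theorem~\ref{thm:topology-gives-two-forms}.
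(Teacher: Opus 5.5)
Your proposal is correct and follows essentially the same route as the paper. In both, \(F\) is identified with the image of \(\sigma_F\) and part (i) is read off directly from the covering sieves. In (ii), both set \(J(X)=\{S\mid j_X(S)=\top_X\}\), use the test \(f\in T\iff f^*T=\top\) to get \(j=j_J\), transport the distributivities along \(\sigma_F\), and cite Theorem~\ref{thm:topology-gives-two-forms} for strongness. The only difference is that you spell out the locality check via \(B\subseteq j(R)\) and idempotence, which the paper compresses into ``the standard nucleus argument''.
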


\begin{proof}
The support map preserves top, finite meets, and inverse images.  Hence
the image in \textup{(i)} already contains maximal sieves and is stable
under finite intersection and pullback.  The two additional conditions
are exactly upward closure and Grothendieck locality, so the image is a
Grothendieck topology; injectivity identifies \(F\) with its covering
form.  Conversely, the support of a covering sieve is the sieve itself,
so every covering form has these properties and determines its topology
uniquely.

For \textup{(ii)}, the reflectors amount to an indexed closure operator
\(j\) which is extensive, idempotent, finite-meet preserving, and
commutes with inverse images.  Put
\[
                         J(X)=\{S\mid j_X(S)=\top_X\}.
\]
The standard nucleus argument gives the maximal-sieve, pullback, and
locality axioms.  More explicitly, naturality and the test
\(f\in T\iff f^*T=\top\), for a sieve \(T\), give
\[
 f\in j_X(S)
 \quad\Longleftrightarrow\quad
 j_Y(f^*S)=\top_Y
 \quad\Longleftrightarrow\quad f^*S\in J(Y).
\]
Thus \(j=j_J\), and its fixed points are exactly the saturation form.
The converse follows from Theorem
\ref{thm:topology-gives-two-forms}.  Finally, the two displayed choices
of admissible presentations are precisely the induced and reflected
distributivities constructed there; the strong assertions follow from
the last part of that theorem.
\end{proof}

\subsection{The terminal base}

When the base is the terminal category \(\mathbf1\), a form admitting
finite initial lifts is simply a meet-semilattice \(L\) with top.  A
Beck--Chevalley square is then an identity square, so every
distributivity is automatically strong.  A
structured sink is an ordinary family, and a final presentation is an
ordinary join presentation
\[
                         u=\bigvee_{i\in I}a_i.
\]
Such a presentation is \emph{exact}, or \emph{distributive}, when for
every \(c\in L\) the join on the right below exists and
\begin{equation}\label{eq:exact-join}
              c\wedge u=\bigvee_{i\in I}(c\wedge a_i).
\end{equation}
This is the admissible-join condition of Bruns and Lakser
\cite{BrunsLakser} and the exact-join condition used for semilattice
sites by Ball and Pultr \cite{BallPultr}.

\begin{lemma}\label{lem:maximal-exact-distributivity}
On every meet-semilattice with top, all exact join presentations form
the largest distributivity.
\end{lemma}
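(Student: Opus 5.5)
The plan is to verify directly that the class \(\mathcal E\) of all exact join presentations in \(L\) satisfies (D1)--(D3), and then to observe that every distributivity is contained in \(\mathcal E\). Over the terminal base every indexing arrow is \(1\) and \(f^*\) is the identity, so a final presentation is just \(u=\bigvee_i a_i\). The presentation required from it by (D3) with respect to \(c\) is \(c\wedge u=\bigvee_i(c\wedge a_i)\). In this notation, exactness (Definition~\ref{def:exact-presentation}) is precisely \eqref{eq:exact-join}.

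Maximality comes first, since it is immediate. If \(\mathcal P\) is any distributivity on \(L\) and \(u=\bigvee_i a_i\) lies in \(\mathcal P\), then (D3) asserts that \(c\wedge u=\bigvee_i(c\wedge a_i)\) is a selected, hence final, presentation for every \(c\). So the presentation is exact, and \(\mathcal P\subseteq\mathcal E\). This is just the terminal-base instance of the remark in the proof of Lemma~\ref{lem:generated-distributivity}. It therefore remains only to show that \(\mathcal E\) is itself a distributivity.

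(D1) is straightforward. A unit presentation \(a=\bigvee\{a\}\) is exact because \(c\wedge a=\bigvee\{c\wedge a\}\), and exactness clearly does not depend on the indexing. For (D3), suppose \(u=\bigvee_i a_i\) is exact and \(c\in L\). The derived presentation \(c\wedge u=\bigvee_i(c\wedge a_i)\) exists by exactness. To see that it is exact, take any \(d\in L\). Then \(d\wedge(c\wedge u)=(d\wedge c)\wedge u=\bigvee_i((d\wedge c)\wedge a_i)=\bigvee_i(d\wedge(c\wedge a_i))\), where the middle equality applies exactness of the original presentation to the element \(d\wedge c\).

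The main step is (D2). Suppose \(u=\bigvee_i a_i\) and each \(a_i=\bigvee_j b_{ij}\) are exact. We must show that \(u=\bigvee_{i,j}b_{ij}\) holds and is exact. The plan is to prove, for each \(c\), the identity \(c\wedge u=\bigvee_{i,j}(c\wedge b_{ij})\); taking \(c=\top\) then gives the substitution itself. First, \(c\wedge u\) is an upper bound of the elements \(c\wedge b_{ij}\). Now let \(v\) be any upper bound of all \(c\wedge b_{ij}\). For fixed \(i\), exactness of \(a_i\) gives \(c\wedge a_i=\bigvee_j(c\wedge b_{ij})\leqslant v\). Exactness of \(u\) then gives \(c\wedge u=\bigvee_i(c\wedge a_i)\leqslant v\). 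Hence \(c\wedge u\) is the least upper bound, which establishes both the substitution and its exactness.

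The only delicate point I anticipate is bookkeeping. Exactness must be stated as the existence of the joins \(\bigvee_i(c\wedge a_i)\) in \(L\) itself, because \(L\) need not be complete. The argument above uses nothing beyond these existing joins and least-upper-bound reasoning, so no completeness is required.
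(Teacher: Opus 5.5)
Your proof is correct and follows essentially the same route as the paper: unit joins are exact, (D3)-restrictions stay exact by meeting once more with \(d\), substitution is handled by the iterated-join identity \(c\wedge u=\bigvee_i(c\wedge a_i)=\bigvee_{i,j}(c\wedge b_{ij})\), and maximality is read off directly from (D3). Your explicit least-upper-bound argument for (D2), with \(c=\top\) recovering the substitution itself, spells out what the paper compresses into ``universal properties give''.
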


\begin{proof}
Unit joins are exact.  If \(u=\bigvee_i a_i\) is exact, then meeting
\eqref{eq:exact-join} once more with \(d\) shows that its restriction
along \(c\) is again exact.  Suppose also that
\(a_i=\bigvee_jb_{ij}\) is exact for each \(i\).  Universal properties
give \(u=\bigvee_{i,j}b_{ij}\), while
\[
 c\wedge u
 =\bigvee_i(c\wedge a_i)
 =\bigvee_{i,j}(c\wedge b_{ij}).
\]
Thus exact joins are closed under substitution and satisfy (D1)--(D3).
Conversely, (D3) says that every join in any distributivity is exact.
\end{proof}

\begin{example}\label{ex:terminal-examples}
Every meet-semilattice with top becomes a distributivity form by
selecting only unit presentations.  A bounded distributive lattice may
instead select all finite joins, including the empty join.  A frame may
select all joins, and a \(\kappa\)-frame may select all joins of
cardinality less than the regular cardinal \(\kappa\).  More generally,
partial frames provide meet-semilattices with specified distributive
joins; one takes the closure of those joins under (D1)--(D3).  See
\cite{Zenk,SchauerteFrith,FrithSchauerteMadden}.
\end{example}

\begin{example}\label{ex:open-set-form}
For a topological space \(X\), the frame \(\mathcal O(X)\), with
intersection as meet and all unions selected, is a distributivity form
over \(\mathbf1\).  A meet-closed basis containing \(X\) gives a
smaller example by selecting exactly those covering families whose
union again belongs to the basis.  In the thin category associated with
the semilattice, the selected families form a subcanonical coverage.
For the maximal exact distributivity this is Stubbe's canonical
topology on a meet-semilattice \cite{Stubbe}.
\end{example}

\section{Extensional presentations}
\label{sec:extensional-presentations}

\subsection{Morphisms, limits, and monomorphisms}

\begin{definition}\label{def:distributivity-morphism}
A \emph{morphism of distributivity forms}
\[
             u:(F,\mathcal P_F)\longrightarrow(G,\mathcal P_G)
\]
over \(\C\) is a natural transformation between the indexed posets
\eqref{eq:indexed-meet-form} whose components preserve finite meets and
which preserves every admissible indexed join:
\[
 u_X\!\left(\bigvee_i{}^{f_i}A_i\right)
       =\bigvee_i{}^{f_i}u_{X_i}(A_i),
\]
with the displayed final presentation admissible in \(G\).  We write
\(\DFrm(\C)\) for the resulting category, and \(\SDFrm(\C)\) for its
full subcategory of strong distributivity forms.
\end{definition}

In lift language, these are precisely the form morphisms preserving
finite initial lifts and the selected final lifts.  Naturality is the
inverse-image equation
\[
                     u_X(f^*B)=f^*u_Y(B).
\]
Because strongness is a property of the chosen presentations rather
than extra operations on arrows, the same definition gives the
morphisms of strong distributivity forms.

Limits are inherited from indexed meet-semilattices: admissibility in a
limit is tested in all coordinates.

\begin{theorem}\label{thm:limits-and-monomorphisms}
Subject to the usual size conventions, both \(\DFrm(\C)\) and
\(\SDFrm(\C)\) have all small limits.  These limits are created by the
forgetful functors to indexed meet-semilattices.  Moreover, a morphism
\(u:F\to G\) is a monomorphism if and only if every component
\[
                       u_X:F_X\longrightarrow G_X
\]
is injective.  In that case every \(u_X\) is an order embedding.
\end{theorem}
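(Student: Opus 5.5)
We construct limits pointwise in \(\Meet\). Given a small diagram \(D:\mathcal I\to\DFrm(\C)\), \(k\mapsto(F^k,\mathcal P_{F^k})\), put
\[
L_X=\lim_k F^k_X,
\]
computed in \(\Meet\), that is, as the set of compatible families with componentwise order, top and meets. The inverse images \(f^*\) act componentwise, so \(L\) is an indexed meet-semilattice, i.e.\ a form with finite indexed meets. Declare a final presentation \(\nabla_i(A_i,f_i)\) in \(L\) admissible when, for every \(k\), its projection \(\nabla_i(A_i^k,f_i)\) is admissible in \(F^k\).

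The first check is that this is a final presentation in \(L\) with apex \(U=(U^k)_k\), where \(U^k=\bigvee_i{}^{f_i}A_i^k\). The family \(U\) is compatible because each transition morphism preserves admissible joins. Finality follows since the order is componentwise: \(A_i\leqslant f_i^*V\) in \(L\) holds iff it holds in every coordinate, iff \(U^k\leqslant V^k\) for all \(k\). Conditions (D1)–(D3) then hold coordinatewise, because units, substitution and the Frobenius presentation are all computed componentwise. Likewise (D4) holds coordinatewise when each \(F^k\) is strong, since pullbacks in \(\C\) are common to all coordinates. The projections preserve meets and admissible joins by construction.

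For the universal property, take a cone \(v^k:G\to F^k\). The induced map \(G\to L\) is natural and meet-preserving. It sends an admissible join of \(G\) to a family whose coordinates are admissible joins \(v^k(\cdot)\), hence to an admissible presentation of \(L\), and uniqueness is clear. The admissibility in \(L\) was chosen as the least restrictive one for which the projections are morphisms, so the forgetful functor creates the limit. The full subcategory \(\SDFrm(\C)\) is closed under this construction, by the remark on (D4) above. Size is handled by the smallness of \(\C\) and of the diagram; the admissible class of \(L\) is a class of small sinks, so no issue arises.

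For monomorphisms, the backward direction is immediate: fibrewise injective maps are injective on underlying components, so \(uv=uw\) forces \(v=w\). The forward direction is the main obstacle, because we need a "free form on one generator" to detect elements. Suppose \(u_X(A)=u_X(B)\). We form the kernel pair of \(u\), a limit, which by the construction above exists in \(\DFrm(\C)\). Its underlying indexed semilattice is the fibrewise pullback \(K_Y=\{(a,b)\mid u_Y a=u_Y b\}\), with admissible presentations those whose two projections are admissible. The two projections \(\pi_1,\pi_2:K\to F\) satisfy \(u\pi_1=u\pi_2\), so the monomorphism gives \(\pi_1=\pi_2\). Since \((A,B)\in K_X\), this yields \(A=B\). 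The only point to verify carefully is that this kernel pair is indeed the limit constructed above, which is immediate from the construction.

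Finally, an injective meet-preserving map is an order embedding. Indeed, \(u(A)\leqslant u(B)\) gives \(u(A\wedge B)=u(A)\wedge u(B)=u(A)\), so \(A\wedge B=A\) by injectivity, that is, \(A\leqslant B\).
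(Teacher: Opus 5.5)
Your proposal is correct. The limit construction matches the paper's: you take the pointwise limit of indexed meet-semilattices and declare a presentation admissible when every projection of it is admissible. You also supply a detail the paper leaves implicit, namely that the coordinatewise apices form a compatible family because the transition morphisms preserve admissible joins.

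Where you differ is the forward direction of the monomorphism characterization. The paper observes that the forgetful functor $U:\DFrm(\C)\to[\C^{\mathrm{op}},\Meet]$ has a left adjoint, which equips an indexed meet-semilattice with its least (unit) distributivity. Hence $U$ sends monomorphisms to monomorphisms, and the paper then uses the fact that monomorphisms of $\Meet$-valued presheaves are componentwise injective. You instead take the kernel pair of $u$. By your limit construction it is the fibrewise set-theoretic pullback with the induced distributivity, so $u$ monic forces the two projections to agree, and evaluating at $(A,B)$ gives $A=B$.

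Your route is more self-contained. It uses only the limits just constructed and reaches injectivity directly, without passing through the characterization of monomorphisms in $[\C^{\mathrm{op}},\Meet]$ and in $\Meet$. Those facts are themselves normally proved by a kernel-pair or free-object argument, so you are doing that step once, one level up.

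The paper's route instead exhibits the free functor. Since the least distributivity is strong, that argument transfers to $\SDFrm(\C)$ immediately. In your version the strong case needs one further remark: the kernel pair of a morphism between strong objects is again strong. You already have this from the closure of $\SDFrm(\C)$ under your construction, but you should state it. The order-embedding step is identical to the paper's.
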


\begin{proof}
Let \((F_d)_{d\in D}\) be a small diagram, and first form its limit as
a diagram of indexed meet-semilattices:
\[
                         L_X=\lim_{d\in D}(F_d)_X.
\]
A final presentation in \(L\) is declared admissible precisely when
its image under every limit projection \(L\to F_d\) is admissible.
Both the order and inverse images in \(L\) are calculated
componentwise, so a displayed indexed join is final if and only if all
its projections are final.  Conditions \textup{(D1)--(D3)} consequently
hold componentwise, as does \textup{(D4)} when the objects \(F_d\) are
strong.  If a cone of distributivity morphisms has vertex \(H\), the
induced indexed meet morphism \(H\to L\) preserves every admissible
presentation because all its composites with the limit projections do.
This proves the universal property.  For the empty diagram, the
resulting terminal object has one cluster in each fibre and admits every
small final presentation.  In particular, products are formed
fibrewise with coordinatewise admissibility, and equalizers carry the
distributivity induced from their codomain.

Let
\[
 U:\DFrm(\C)\longrightarrow[\C^{\mathrm{op}},\Meet]
\]
be the forgetful functor.  It has a left adjoint which equips an indexed
meet-semilattice with the least distributivity, obtained by closing the
unit presentations under the required axioms.  Hence \(U\) reflects
monomorphisms.  Monomorphisms of indexed meet-semilattices are computed
componentwise, and a finite-meet morphism is monic precisely when it is
injective.  Conversely, a componentwise injective natural
transformation is plainly left cancellable.  The least distributivity
is strong, so the same argument applies to \(\SDFrm(\C)\).

Finally, if \(u_X\) is injective and \(u_X(A)\leqslant u_X(B)\), then
\[
 u_X(A)=u_X(A)\wedge u_X(B)=u_X(A\wedge B),
\]
whence \(A=A\wedge B\), and therefore \(A\leqslant B\).
\end{proof}

\subsection{The constant-top ideals}

A fibrewise constant-top assignment need not be a morphism.  Applied to
a presentation indexed by \((f_i:X_i\to X)_i\), preservation would
require
\[
                    \top_X=\nabla_i(\top_{X_i},f_i)
\]
to be admissible in the codomain.  For sieves, the unary final lift of
\(\top_Y\) along \(f:Y\to X\) is the principal sieve generated by
\(f\), and is not generally maximal.  We therefore retain only the
constant-top assignments which genuinely preserve the structure.

\begin{definition}\label{def:null-ideal}
For distributivity forms \(F,G\), let \(\Null(F,G)\) consist of the
morphisms \(n:F\to G\) such that
\[
                        n_X(A)=\top_X^G
\]
for every \(X\) and \(A\in F_X\).  Such arrows are called \emph{null}.
The same definition, restricted to strong objects, gives an ideal
\(\Null_{\mathrm s}\) in \(\SDFrm(\C)\).
\end{definition}

Each of these classes is a two-sided ideal: if \(n\) is null, then
\(un\) and \(nv\), whenever defined, are null because every morphism
preserves top.  It is important that \(\Null(F,G)\) may be empty; the
categories are not in general pointed.

Recall that an ideal is \emph{closed} if every null arrow factors
through a null object, that is, an object whose identity is null
\cite{GrandisRelative,GrandisBook}.

\begin{lemma}\label{lem:null-ideal-closed}
The ideals \(\Null\) and \(\Null_{\mathrm s}\) are closed.
\end{lemma}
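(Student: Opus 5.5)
The natural null object through which to factor is the terminal object \(\mathbf 1\) of \(\DFrm(\C)\) from Theorem~\ref{thm:limits-and-monomorphisms}. It has a single cluster \(\top_X\) in each fibre and admits every small final presentation. Its identity is null, since each fibre is the singleton \(\{\top_X\}\); and it is strong, because every Beck--Chevalley transform is again a presentation of the unique cluster and so is admissible. Thus \(\mathbf 1\) is a null object in both \(\DFrm(\C)\) and \(\SDFrm(\C)\).

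Given a null morphism \(n:F\to G\), I will factor it as
\[
F\xrightarrow{\ !\ }\mathbf 1\xrightarrow{\ t\ }G,
\qquad t_X(\top_X)=\top_X^G .
\]
The unique map \(!\) exists because \(\mathbf 1\) is terminal, and the composite is fibrewise constant at top, hence equal to \(n\). The substance of the proof is to show that \(t\) is a morphism of distributivity forms. Naturality and preservation of finite meets are clear, since \(f^*\top=\top\) and \(\top\wedge\top=\top\). The key condition is admissible-join preservation: for every small family \((f_i:X_i\to X)_i\), the presentation \(\top_X^G=\nabla_i(\top^G_{X_i},f_i)\) must be admissible in \(G\). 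This is exactly where the remark before Definition~\ref{def:null-ideal} bites, so the argument cannot work for arbitrary \(G\).

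To get around this I will use the existence of \(n\) itself. Fix a family \((f_i)_i\) and choose any clusters \(A_i\in F_{X_i}\), for instance \(A_i=\top^F_{X_i}\). It is not automatic that \(\nabla_i(\top^F_{X_i},f_i)\) is admissible in \(F\). So if needed I will replace \(\mathbf 1\) by a smaller null object: the form \(\mathbf 1_F\) with one cluster per fibre whose admissible presentations are the images of admissible presentations of \(F\) under \(!\).

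I will check that \(\mathbf 1_F\) satisfies (D1)--(D3), and (D4) when \(F\) is strong, by applying \(!\) to the corresponding closure properties of \(\mathcal P_F\). Since all clusters coincide, every indexed join in \(\mathbf 1_F\) is final and each axiom transports directly. With this choice, \(!:F\to\mathbf 1_F\) preserves admissible joins by construction. For \(t:\mathbf 1_F\to G\), each admissible presentation of \(\mathbf 1_F\) is the image of some admissible \(U=\nabla_i(A_i,f_i)\) in \(F\). Its image under \(t\) is \(\nabla_i(\top,f_i)=\top\), which equals \(n\) applied to that presentation. Because \(n\) preserves admissible joins, this presentation is admissible in \(G\).

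The identity of \(\mathbf 1_F\) is null, so \(\mathbf 1_F\) is a null object, and \(n=t\circ{!}\) gives the required factorization. I expect the main obstacle to be this choice of the intermediate null object: the naive terminal object fails because the constant-top map out of it need not be a morphism. The remaining checks are routine transport of (D1)--(D4).
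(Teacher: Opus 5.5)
Your strategy matches the paper's: factor \(n\) through a form with one cluster per fibre, whose distributivity is built from the shapes \((f_i)_i\) of the admissible presentations of \(F\). The gap is your claim that the bare class of these images already satisfies (D1)--(D3). That class is closed under (D1), (D3) and (D4), but in general it is not closed under substitution (D2).

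Every cluster of \(\mathbf 1_F\) over \(X_i\) is \(\top_{X_i}\). So (D2) in \(\mathbf 1_F\) asks you to substitute, into the image of \(U=\nabla_i(A_i,f_i)\), the images of \emph{arbitrary} admissible presentations \(A_i'=\nabla_j(B_{ij},g_{ij})\) of \(F\) over \(X_i\). But (D2) in \(F\) yields the shape \((f_ig_{ij})_{i,j}\) only when \(A_i'=A_i\).

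Here is a concrete failure. Let \(\C\) be the arrow \(0\xrightarrow{e}1\), take \(F_0=\{\bot<\top_0\}\) and \(F_1=\{\top_1\}\), and give \(F\) the distributivity generated by two presentations: the vertical join \(\bot=\bot\vee\bot\) over \(0\), and \(\top_1=\nabla(\top_0,e)\). Every generated presentation with apex \(\top_0\) is a unit. Up to reindexing, the only generated presentations over \(1\) are \(\nabla(\top_1,1_1)\) and \(\nabla(\top_0,e)\). Hence no admissible presentation of \(F\) has shape \((e,e)\). In \(\mathbf 1_F\), however, substituting the image \(\top_0=\top_0\vee\top_0\) of the first generator into the image \(\top_1=\nabla(\top_0,e)\) of the second forces that shape. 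So \(\mathbf 1_F\) is not a distributivity form.

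The repair, as in the paper, is to give the one-cluster form the \emph{least} distributivity containing these shapes (Lemma~\ref{lem:generated-distributivity}; every presentation there is exact), or its strong closure in the strong case. Your argument for \(t\) then no longer applies, because a generated presentation need not be the image of anything in \(F\).

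The missing step concerns the class \(\mathcal Q\) of shapes \((f_i:X_i\to X)_i\) for which \(\top^G_X=\nabla_i(\top^G_{X_i},f_i)\) is admissible in \(G\). You need that \(\mathcal Q\) is itself closed under (D1)--(D3), and under (D4) when \(G\) is strong. This holds because in \(G\) the sources are literally the top clusters:
apexes match under substitution, restriction along \(\top\) changes nothing, and Beck--Chevalley transforms of top-sourced presentations are again top-sourced.

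Since \(n\) preserves admissible presentations, \(\mathcal Q\) contains the generators, hence their closure, and so \(t\) is a morphism. Alternatively, you could equip the one-cluster form directly with \(\mathcal Q\).
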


\begin{proof}
Let \(n:F\to G\) be null.  Take the form \(Z_n\) having one cluster in
each fibre.  Select in \(Z_n\) the least distributivity containing the
indexing shapes of the images of all admissible presentations of \(F\).
Then the constant map \(F\to Z_n\) is a morphism.  Because \(n\) itself
preserves the admissible presentations of \(F\), the constant map
\(Z_n\to G\) preserves the generators and hence their closure.  Thus
\(n\) factors through \(Z_n\).  The identity of \(Z_n\) is constant top,
so \(Z_n\) is null.  In the strong case take the least strong closure of
the same shapes; the identical argument uses (D4) as one further closure
rule.
\end{proof}

For an ideal \(\Null\), a \emph{relative kernel} of \(u:F\to G\) is a
morphism \(k:K\to F\) such that \(uk\) is null and through which every
\(h:H\to F\) with \(uh\) null factors uniquely.  A relative cokernel is
defined dually.  We henceforth omit ``relative'' when the ideal is
clear.

\subsection{Kernels, cokernels, and congruences}

For a morphism \(u:F\to G\), put
\begin{equation}\label{eq:kernel-fibres}
              K_X=\{A\in F_X\mid u_X(A)=\top_X^G\}.
\end{equation}
Give \(K\) the induced distributivity: a presentation of \(F\) is
admitted in \(K\) when all its sources and its apex lie in \(K\).

\begin{theorem}\label{thm:kernels}
The fibres in \eqref{eq:kernel-fibres}, with their induced
distributivity, form the \(\Null\)-kernel of \(u\).  If \(F\) and \(G\)
are strong, the induced distributivity on \(K\) is strong and the same
inclusion is the \(\Null_{\mathrm s}\)-kernel.  Thus both categories have
all ideal-relative kernels.
\end{theorem}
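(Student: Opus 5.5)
Our plan is to verify three things in turn: that \(K\) is a distributivity form and the inclusion \(k:K\to F\) is a morphism with \(uk\) null; that \(K\) is strong when \(F\) and \(G\) are; and that \(k\) has the universal property.

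\textbf{Step 1: \(K\) is a distributivity form.} Each \(u_X\) preserves top and binary meets, and \(u\) is natural. Hence \(\top_X\in K_X\), \(K_X\) is closed under \(\wedge\), and \(u_X(f^*B)=f^*u_Y(B)=f^*\top=\top\). So \(K\) is a sub-indexed meet-semilattice of \(F\), and it has all finite initial lifts, computed as in \(F\).

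Let \(U=\nabla_i(A_i,f_i)\) be admissible in \(F\) with \(U\) and all \(A_i\) in \(K\). This is still a final lift inside \(K\): any \(B\in K\) is a cluster of \(F\), and the order and inverse images of \(K\) are those of \(F\).

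The axioms then follow.
\begin{enumerate}[label=\textup{(D\arabic*)}]
\item Unit presentations of clusters of \(K\) are admissible in \(F\) and have all data in \(K\).
\item For substitution, the sources \(B_{ij}\) and the apex \(U\) are already in \(K\).
\item For \(C\in K_X\), the new sources \(A_i\wedge f_i^*C\) and the new apex \(U\wedge C\) lie in \(K\) by meet and pullback closure, and the presentation is admissible in \(F\) by (D3) for \(F\).
\item In the strong case, \(h^*U\) and \(h_i^*A_i\) lie in \(K\) by pullback closure. The Beck--Chevalley presentation is admissible in \(F\) by (D4) for \(F\).
\end{enumerate}
Notice that strongness of \(G\) is not needed for (D4).

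The inclusion \(k\) preserves meets, inverse images and admissible presentations by construction, and \(uk\) is null by the definition of \(K\).

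\textbf{Step 2: universal property.} Let \(h:H\to F\) be a morphism with \(uh\) null. Then every \(h_X(A)\) lies in \(K_X\), so \(h\) factors through \(k\) as a natural meet-preserving \(h':H\to K\). Uniqueness holds because \(k\) is fibrewise injective, hence monic by Theorem~\ref{thm:limits-and-monomorphisms}.

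The main point to check is that \(h'\) preserves admissible presentations. Take an admissible presentation \(V=\nabla_i(A_i,f_i)\) in \(H\). Since \(h\) is a morphism, \(h(V)=\nabla_i(h(A_i),f_i)\) is admissible in \(F\). Its apex and all its sources lie in \(K\), so it is admitted in the induced distributivity of \(K\). This is exactly why the induced distributivity was chosen as "all sources and apex lie in \(K\)": it is the largest choice for which \(k\) is still a morphism, so every morphism landing in \(K\) automatically preserves admissibility.

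In the strong case, \(H\) and the factorization live in \(\SDFrm(\C)\), which is a full subcategory, and \(K\) is strong by Step 1. So the same inclusion is the \(\Null_{\mathrm s}\)-kernel.

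I expect no step to be a genuine obstacle. The only care needed is to confirm that final lifts computed in \(F\) remain final in \(K\), which they do because \(K\) is a full subposet with the same reindexing.
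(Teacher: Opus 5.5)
Your proposal is correct and follows essentially the same route as the paper's proof. You check that the top class is closed under top, finite meets and inverse images, that the induced distributivity inherits \textup{(D1)}--\textup{(D4)} by pullback stability of \(K\), and that any \(h\) with \(uh\) null lands in \(K\) and automatically preserves the induced admissible presentations. You simply write out in more detail what the paper states briefly, and your remark that strongness of \(G\) is not needed for \textup{(D4)} is accurate.
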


\begin{proof}
The fibres contain top and are closed under finite meets and inverse
images.  An admissible final lift of \(F\) whose data lie in \(K\)
remains final there, while (D1)--(D3) stay internal.  The same is true of
(D4), since the kernel is stable under inverse image.  The composite
\(K\to F\xrightarrow{u}G\) is null.  If \(h:H\to F\) has \(uh\) null,
every value of \(h\) lies in \(K\), and the induced definition makes the
unique set-theoretic factor \(H\to K\) a morphism.  This is the required
universal property in either category.
\end{proof}

Every kernel fibre is a filter in the ambient fibre.  What distinguishes
kernels from arbitrary pullback-stable fibrewise filters is their
compatibility with the admissible indexed joins.

An \emph{indexed meet congruence} \(\theta\) on \(F\) is a family of
meet congruences \(\theta_X\) stable under inverse image.

\begin{definition}\label{def:distributivity-congruence}
An indexed meet congruence \(\theta\) on a distributivity form is a
\emph{distributivity congruence} when, for admissible presentations with
the same indexing arrows,
\[
 \begin{gathered}
 U=\nabla_i(A_i,f_i),\qquad
 V=\nabla_i(B_i,f_i),\\
 A_i\mathrel\theta B_i\ \text{for every }i
 \quad\Longrightarrow\quad U\mathrel\theta V.
 \end{gathered}
\]
\end{definition}

If \(q:F\to F/\theta\) is the fibrewise quotient by such a congruence,
the image of every admissible presentation is again a genuine final
lift.  Indeed, if \([A_i]\leqslant f_i^*[C]\), then
\(A_i\mathrel\theta A_i\wedge f_i^*C\).  Compatibility with the original
presentation and its (D3)-restriction yields
\(U\mathrel\theta U\wedge C\), so \([U]\leqslant[C]\).  This is the
point at which distributivity is essential.

Intersections of distributivity congruences are distributivity
congruences.  For \(u:F\to G\), let \(\theta_u\) be the least such
congruence on \(G\) satisfying
\begin{equation}\label{eq:cokernel-generators}
                         u(A)\mathrel{\theta_u}\top
\end{equation}
for every cluster \(A\) of \(F\).  Equip \(G/\theta_u\) with the
distributivity generated by the images of the admissible presentations
of \(G\).  In the strong case take their strong closure under
(D1)--(D4).

\begin{theorem}\label{thm:cokernels}
The quotient map
\[
                 q_u:G\longrightarrow G/\theta_u
\]
is the \(\Null\)-cokernel of \(u\), and in the strong category it is the
\(\Null_{\mathrm s}\)-cokernel.  Consequently
\((\DFrm(\C),\Null)\) and \((\SDFrm(\C),\Null_{\mathrm s})\) are
semiexact categories in the sense of Grandis.
\end{theorem}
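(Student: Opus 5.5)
The plan is to verify, in order, that \(q_u\) is a morphism of distributivity forms, that \(q_u u\) is null, and that every morphism \(v:G\to H\) with \(vu\) null factors uniquely through \(q_u\). Semiexactness will then follow by combining this with \cref{thm:kernels} and \cref{lem:null-ideal-closed}.

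First, \(G/\theta_u\) is formed fibrewise. Because \(\theta_u\) is an indexed meet congruence, the quotient fibres are meet-semilattices with top, and inverse images descend to them, so \(q_u\) is a natural finite-meet-preserving map. By the remark following \cref{def:distributivity-congruence}, the image under \(q_u\) of every admissible presentation of \(G\) is a genuine final lift. The distributivity generated by these images is therefore well defined, since by \cref{lem:generated-distributivity} we only need them to be exact. To check exactness, note that the (D3)-restriction of an image presentation is the image of the (D3)-restriction in \(G\), hence again final. In the strong case the same argument applies to the Beck--Chevalley transforms: they exist in \(G\) and their images remain final. Thus \(q_u\) preserves admissible presentations by construction, and \(q_uu\) is null by \eqref{eq:cokernel-generators}.

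For the universal property, take \(v:G\to H\) with \(vu\) null. Define \(\ker v\) by \(A\mathrel{\theta}B\) iff \(v_X(A)=v_X(B)\). This is an indexed meet congruence, since \(v\) preserves meets and inverse images. It is a distributivity congruence because \(v\) preserves admissible joins: if \(U=\nabla_i(A_i,f_i)\) and \(V=\nabla_i(B_i,f_i)\) with \(v(A_i)=v(B_i)\), then \(v(U)\) and \(v(V)\) are final lifts of the same structured sink in \(H\), hence equal. It also contains the generators \(u(A)\mathrel\theta\top\). By minimality, \(\theta_u\subseteq\ker v\), so \(v\) factors uniquely and set-theoretically as \(\bar v q_u\). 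The factor \(\bar v\) is fibrewise meet-preserving and natural because \(q_u\) is fibrewise surjective. It remains to show that \(\bar v\) preserves the generated distributivity. It preserves the generators, namely the images of admissible presentations of \(G\), because \(v\) does. The class of presentations of \(G/\theta_u\) that \(\bar v\) sends to admissible presentations of \(H\) contains units, is closed under reindexing and substitution, and is closed under (D3) and, in the strong case, (D4), because \(H\) satisfies these axioms and \(\bar v\) commutes with meets and inverse images. Since the generated distributivity is the least class with these closure properties, \(\bar v\) preserves all of it. Uniqueness follows from the surjectivity of \(q_u\).

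The main obstacle is the closure step just described, showing that the class of presentations preserved by \(\bar v\) is closed under (D3) and (D4). For (D3) one must check that \(\bar v\) sends \(U\wedge C=\nabla_i(A_i\wedge f_i^*C,f_i)\) to the (D3)-restriction in \(H\) of the image presentation, and that this restriction is admissible there. For (D4) the same must hold for pullback transforms. This uses only meet-preservation and naturality of \(\bar v\), together with the axioms in \(H\).

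Finally, semiexactness in the sense of Grandis requires a closed ideal, which is \cref{lem:null-ideal-closed}, together with the existence of all relative kernels, given by \cref{thm:kernels}, and of all relative cokernels, which we have now established.
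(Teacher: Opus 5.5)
Your proposal is correct and follows the same route as the paper. Images of admissible presentations are final in $G/\theta_u$, and you close them under (D1)--(D3), adding (D4) in the strong case. Equality under any $v$ with $vu$ null is a distributivity congruence containing the generators, so $v$ factors through $q_u$ and the factor preserves the generated distributivity; semiexactness then follows from the closed ideal together with the existence of kernels and cokernels.

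You also supply details the paper leaves implicit: the exactness and universal exactness of the image presentations, which are needed to invoke the generated-distributivity lemma, and the closure argument showing that $\bar v$ preserves the whole generated class. One small point of care there: run the closure argument inside the generated distributivity, where the (D3)- and (D4)-transforms are guaranteed to exist as final presentations.
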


\begin{proof}
The preceding argument shows that images of admissible presentations
are genuine final lifts in the quotient.  Closing them under (D1)--(D3)
therefore defines a distributivity.  In the strong case, the image of a
Beck--Chevalley equality is again that equality because \(q_u\) commutes
with inverse image, so closing under (D4) defines a strong
distributivity.  The map \(q_u\) preserves all selected structure and
\(q_uu\) is null.

If \(r:G\to H\) has \(ru\) null, equality under \(r\) is a
distributivity congruence containing the generators
\eqref{eq:cokernel-generators}.  Thus \(r\) factors uniquely through
\(q_u\), and the factor preserves the generated distributivity.  The
same proof works in the strong closure.  Hence \(q_u\) has the stated
universal property.  Lemma~\ref{lem:null-ideal-closed}, together with
the existence of all relative kernels and cokernels, is precisely the
semiexactness assertion.
\end{proof}

For a pullback-stable fibrewise filter \(K\subseteq F\), let
\(\theta_K\) be the least distributivity congruence sending every member
of \(K\) to top, and put
\[
       \operatorname{ncl}_{\mathcal P}(K)
          =\{A\in F\mid A\mathrel{\theta_K}\top\}.
\]

\begin{lemma}\label{lem:kernel-criterion}
With the induced distributivity, \(K\hookrightarrow F\) is a relative
kernel if and only if
\[
              \operatorname{ncl}_{\mathcal P}(K)=K.
\]
In that case its canonical cokernel is \(F\to F/\theta_K\).  The same
criterion holds for strong distributivity forms.
\end{lemma}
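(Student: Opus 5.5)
Both directions will be reduced to comparing $K$ with the kernel of the quotient map $q=q_K\colon F\to F/\theta_K$. Here $q_K$ denotes the cokernel of the inclusion $\iota\colon K\hookrightarrow F$ from Theorem~\ref{thm:cokernels}. Its defining congruence is $\theta_\iota$, and since $\iota$ is an inclusion this is exactly $\theta_K$. First I check that $K$, with the induced distributivity, is an object of the category and $\iota$ a morphism. The argument is the first part of the proof of Theorem~\ref{thm:kernels}: $K$ is a pullback-stable fibrewise filter, so it contains top and is closed under finite meets and inverse images. Presentations of $F$ whose sources and apex lie in $K$ satisfy (D1)--(D3) internally, and also (D4) in the strong case, by pullback stability. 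Note that this part never used that $K$ was defined as a kernel.

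By Theorem~\ref{thm:kernels}, the $\Null$-kernel of $q$ is the fibrewise set $\{A\in F_X\mid q(A)=\top\}=\operatorname{ncl}_{\mathcal P}(K)$, with the induced distributivity. If $\operatorname{ncl}_{\mathcal P}(K)=K$, then $\iota$ coincides with the kernel of $q$, distributivity included, since both carry the induced one. So $\iota$ is a relative kernel, and its cokernel is $q$ by construction, which gives the second sentence.

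Conversely, suppose $\iota\colon K\hookrightarrow F$ is the relative kernel of some $u\colon F\to G$. Since $u\iota$ is null, $u$ sends every member of $K$ to top. The equality congruence of $u$ is a distributivity congruence; this is the observation made in the proof of Theorem~\ref{thm:cokernels}, that equality under a morphism preserving admissible joins is compatible with them. It contains the generators, so $\theta_K\subseteq\ker u$. Hence any $A\in\operatorname{ncl}_{\mathcal P}(K)$ satisfies $u(A)=u(\top)=\top$. By Theorem~\ref{thm:kernels} the kernel of $u$ is the fibrewise preimage of top under $u$, so $A$ lies in the image of $\iota$, which is $K$. The reverse inclusion $K\subseteq\operatorname{ncl}_{\mathcal P}(K)$ is immediate from the generators.

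The one point needing care is this converse. An abstract kernel of $u$ is only isomorphic to the concrete one, so I must use that isomorphisms of subobjects over $F$ preserve underlying sets. This holds because monomorphisms are fibrewise injective (Theorem~\ref{thm:limits-and-monomorphisms}), so comparing the two monomorphisms into $F$ identifies $K$ with the preimage of top. For the strong case, the same argument runs with Theorems~\ref{thm:kernels} and~\ref{thm:cokernels} in their strong forms. One must check that $\theta_K$ is the same congruence in both settings. It is, because being a distributivity congruence refers only to the admissible presentations of $F$, not to the closure chosen on the quotient.
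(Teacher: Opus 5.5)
Your proposal is correct and is essentially the paper's proof. The forward direction recognises $K$ as the kernel of $F\to F/\theta_K$, whose top class is $\operatorname{ncl}_{\mathcal P}(K)$ by definition; the converse uses that equality under $u$ is a distributivity congruence killing $K$, so minimality of $\theta_K$ gives $\operatorname{ncl}_{\mathcal P}(K)\subseteq K$. Your extra bookkeeping simply makes explicit what the paper leaves implicit: that $K$ with the induced distributivity is an object, that an abstract kernel has the same image as the concrete one, and that $\theta_K$ does not depend on the strong closure. The second of these points only needs the comparison isomorphism to be fibrewise bijective, which holds for any isomorphism, so the appeal to fibrewise injectivity of monomorphisms is not really needed.
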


\begin{proof}
If the equality holds, \(K\) is exactly the kernel of its cokernel by
Theorem~\ref{thm:cokernels}.  Conversely, if \(K=\ker u\), equality
under \(u\) is a distributivity congruence killing \(K\), so minimality
gives \(\operatorname{ncl}_{\mathcal P}(K)\subseteq K\); the reverse
inclusion is built into the generators.  The strong proof is identical.
\end{proof}

\subsection{Normal factorization and stability}

A morphism \(m:F\to G\) is \emph{top-reflecting} when, for every
\(X\in\C\) and \(A\in F_X\),
\[
       m_X(A)=\top_X^G\quad\Longrightarrow\quad A=\top_X^F.
\]
Equivalently, its relative kernel is a null object.  Let \(\mathcal E\)
be the class of relative cokernels and \(\mathcal M\) the class of
top-reflecting morphisms.

\begin{theorem}\label{thm:cokernel-factorization-system}
The pair
\[
             (\mathcal E,\mathcal M)
       =(\textup{relative cokernels},\textup{top-reflecting morphisms})
\]
is an orthogonal factorization system on \(\DFrm(\C)\).  The analogous
pair is an orthogonal factorization system on \(\SDFrm(\C)\).  Every
monomorphism is top-reflecting, but the converse need not hold.
\end{theorem}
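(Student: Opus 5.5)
The plan is to build the factorization from kernels and cokernels, then prove orthogonality directly, using that cokernels are fibrewise surjective quotient maps. Let \(u:F\to G\) be a morphism. Take its relative kernel \(k:K\hookrightarrow F\) from Theorem~\ref{thm:kernels}, and then the relative cokernel \(q=q_k:F\to F/\theta_K\) of \(k\) from Theorem~\ref{thm:cokernels}. Since \(uk\) is null, the universal property of \(q\) gives a unique morphism of distributivity forms \(m:F/\theta_K\to G\) with \(mq=u\). This \(m\) automatically preserves the generated distributivity, because the factor is produced inside \(\DFrm(\C)\) (or \(\SDFrm(\C)\)).

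To see that \(m\) is top-reflecting, suppose \(m[A]=\top\). Then \(u(A)=\top\), so \(A\in K\). Since \(\theta_K\) identifies every member of \(K\) with top, \([A]=[\top]\). Thus \(u=mq\) with \(q\in\mathcal E\) and \(m\in\mathcal M\). Both classes contain the isomorphisms and are closed under composition with them. For \(\mathcal M\) this is clear. For \(\mathcal E\), note that every \(F\) receives a null map from the form \(Z\) with one cluster per fibre and only unit presentations, since constant top preserves units. The cokernel of \(Z\to F\) is given by the least distributivity congruence containing \(\top\mathrel\theta\top\), which is equality. Hence identities, and so all isomorphisms, are cokernels.

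For orthogonality, consider \(q:F\to Q\), the cokernel of some \(v:H\to F\), and \(m:M\to N\) top-reflecting, with \(ma=bq\). Then \(mav=bqv\) is null, so \(m(av(C))=\top\) for all \(C\). Top-reflection gives \(av(C)=\top\), so \(av\) is null, and \(a\) factors uniquely as \(a=dq\). Next, \(mdq=ma=bq\). Relative cokernels are epimorphisms: the uniqueness clause in the universal property, or simply fibrewise surjectivity of \(G\to G/\theta_u\), shows this. Hence \(md=b\), and the diagonal is unique for the same reason. The identical argument runs in \(\SDFrm(\C)\) using \(\Null_{\mathrm s}\) and the strong versions of Theorems~\ref{thm:kernels} and~\ref{thm:cokernels}.

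For the final assertion, a monomorphism is fibrewise injective by Theorem~\ref{thm:limits-and-monomorphisms}. Since it preserves top, it reflects top. For a non-injective top-reflecting morphism, work over \(\mathbf1\) with only unit presentations; such forms are automatically strong. Take the map from the chain \(0<a<1\) to \(\{0<1\}\) sending \(0,a\mapsto0\) and \(1\mapsto1\). It is monotone and preserves top and binary meets. It also preserves unit presentations, and it reflects top, but it is not injective. The step needing the most care is confirming that the induced \(m\) and the diagonal \(d\) preserve the admissible presentations, including the generated (and in the strong case, (D4)-closed) distributivity on the quotient. I would handle it by always obtaining these maps from the cokernel universal property rather than constructing them by hand.
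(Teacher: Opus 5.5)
Your proof is correct, and its core is the paper's own argument. The factorization through the cokernel of the kernel, the top-reflection of the induced factor, the orthogonality argument, and the chain counterexample all match.

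Where you diverge is the final step that turns factorization plus unique lifting into an orthogonal factorization system.

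\begin{itemize}
\item You check that both classes contain the isomorphisms and are closed under composition with them. You then rely, implicitly, on the standard (Freyd--Kelly) fact that these conditions force $\mathcal E={}^\perp\mathcal M$ and $\mathcal M=\mathcal E^\perp$.
\item The paper proves these two equalities directly by retract arguments. For a morphism right orthogonal to all cokernels, a lift splits its cokernel factor, so that factor is invertible. For a morphism left orthogonal to $\mathcal M$, the paper verifies by hand that it is the cokernel of the same $k$.
\end{itemize}

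Your route is shorter, but it needs one extra fact the paper never uses: identities are relative cokernels. That is not automatic here, because the categories are not pointed and $\Null(F,G)$ may be empty, so you must exhibit a null map into each $F$. Your one-cluster form $Z$ with unit presentations does this. The kernel $\{\top\}\hookrightarrow F$ of $1_F$ does it too, and it is slightly cleaner in $\SDFrm(\C)$, where Theorem~\ref{thm:kernels} already guarantees strongness so you need not check that $Z$ is strong.

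The paper's route avoids that check and makes explicit that each class is exactly the orthogonal complement of the other.
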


\begin{proof}
Let \(u:F\to G\), put \(K=\ker u\), and factor \(u\) through the
cokernel of its kernel:
\[
 F\xrightarrow{q_K}F/\theta_K
       \xrightarrow{\overline u}G.
\]
Equality under \(u\) is a distributivity congruence killing \(K\), so
the second map exists.  If
\(\overline u([A])=\top\), then \(u(A)=\top\), hence \(A\in K\) and
\([A]=\top\).  Thus \(\overline u\) is top-reflecting, and every
morphism has an \(\mathcal E\)--\(\mathcal M\) factorization.

Let \(e:A\to B\) be the cokernel of \(k:K\to A\), and consider a
commutative square
\[
\begin{array}{ccc}
A&\xrightarrow{a}&X\\
{\scriptstyle e}\downarrow&&\downarrow{\scriptstyle m}\\
B&\xrightarrow{b}&Y
\end{array}
\]
with \(m\) top-reflecting.  Since \(ek\) is null,
\(mak=bek\) is null.  Top-reflection gives that \(ak\) is null, so the
cokernel property gives a unique \(d:B\to X\) with \(de=a\).  Every
relative cokernel is epic; hence \(mde=ma=be\) implies \(md=b\).
This proves \(\mathcal E\perp\mathcal M\).

Conversely, suppose that \(m:X\to Y\) is right orthogonal to every
relative cokernel, and factor it as
\[
             X\xrightarrow{q}Q\xrightarrow{\overline m}Y
\]
by the construction above.  Applying the lifting property to the square
with top map \(1_X\) and bottom map \(\overline m\) gives a map
\(d:Q\to X\) such that \(dq=1_X\).  Since \(q\) is epic, also
\(qd=1_Q\).  Thus \(q\) is an isomorphism, and \(m\) is
top-reflecting.  It follows that \(\mathcal M=\mathcal E^\perp\).

Now suppose that \(\ell:A\to B\) lies in
\({}^\perp\mathcal M\), and factor it as
\[
                    A\xrightarrow{e}Q\xrightarrow{m}B.
\]
The lifting property gives \(d:B\to Q\) with \(d\ell=e\) and
\(md=1_B\).  If \(e\) is the cokernel of \(k:K\to A\), then
\(\ell k\) is null.  Given \(t:A\to T\) with \(tk\) null, write
uniquely \(t=he\).  Then \(t=(hd)\ell\).  If also \(t=s\ell\), the
epimorphism \(e\) gives \(sm=h\), and hence \(s=smd=hd\).  Thus
\(\ell\) is itself the cokernel of \(k\), and
\(\mathcal E={}^\perp\mathcal M\).

Every monomorphism reflects top by Theorem
\ref{thm:limits-and-monomorphisms}, since it preserves top and is
fibrewise injective.  The inclusion is strict already over the terminal
base.  Give the three-element and two-element chains their least
distributivities.  The meet morphism
\[
 \{0<a<1\}\longrightarrow\{0<1\},\qquad
 0,a\longmapsto0,\quad 1\longmapsto1,
\]
reflects top but is not injective.
\end{proof}

The categories are semiexact but need not be homological.  The precise
composition behaviour is asymmetric.

\begin{theorem}\label{thm:composition-normal-maps}
Kernels need not be closed under composition in either
\((\DFrm(\C),\Null)\) or \((\SDFrm(\C),\Null_{\mathrm s})\).  Cokernels,
by contrast, are closed under composition in both categories.
\end{theorem}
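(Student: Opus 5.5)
For closure of cokernels under composition, I will use the factorization system of \cref{thm:cokernel-factorization-system}. There \(\mathcal E\) is the class of relative cokernels. The left class of any orthogonal factorization system is closed under composition: if \(e_1,e_2\in{}^\perp\mathcal M\), a square with \(e_2e_1\) on the left and \(m\in\mathcal M\) on the right is solved by first lifting against \(e_1\), then against \(e_2\). Uniqueness of the diagonal follows from uniqueness in each stage. Since \(\mathcal E={}^\perp\mathcal M\) was proved in both \(\DFrm(\C)\) and \(\SDFrm(\C)\), composites of cokernels are cokernels in both categories. As an optional check, I would also give the direct description: \(q_{\theta'}\circ q_\theta\) is the quotient by the least distributivity congruence killing the preimage of \(\ker q_{\theta'}\).

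For the failure of kernels to compose, I will build a counterexample over the terminal base \(\mathbf1\). This suffices for an arbitrary \(\C\) after pulling back along \(\C\to\mathbf 1\), that is, taking constant indexed forms with all arrows acting as identity; I would check only that constancy preserves the relevant kernels. Over \(\mathbf1\), a kernel is a filter \(K\) of a meet-semilattice \(L\) with selected joins satisfying \(\operatorname{ncl}_{\mathcal P}(K)=K\), by \cref{lem:kernel-criterion}. The induced distributivity on \(K\) admits only those selected joins whose sources and apex all lie in \(K\).

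The idea is that the induced distributivity on a kernel forgets joins whose sources leave the kernel. Inside \(K\) such a join is then no longer selected, so a sub-filter \(K'\subseteq K\) can be normal in \(K\) without being normal in \(L\). Concretely, I take \(L\) to be a finite distributive lattice with all finite joins selected. Let \(K=\uparrow\! b\) be a principal filter, which is a kernel: the congruence \(x\sim y\iff x\wedge b=y\wedge b\) is a distributivity congruence, being a lattice congruence. Inside \(K\), which carries only the joins of \(L\) whose summands lie above \(b\), I choose a filter \(K'\) and a selected join \(u=a_1\vee a_2\) in \(L\). This join should force \(u\in\operatorname{ncl}(K')\) in \(L\) while \(u\notin K'\), because one summand \(a_i\) lies outside \(K\) and so the join is not available inside \(K\).

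A first candidate is \(L=\mathbf 2\times\mathbf 2\times\mathbf2\) or a small Boolean lattice with \(b\) an atom, or with \(K\) chosen so that \(K\) is not a sublattice-complete piece. I then check two things. First, \(K'\) is a kernel of \(K\) by exhibiting a congruence on \(K\) with top class \(K'\), compatible only with the joins selected in \(K\). Second, \(\operatorname{ncl}_{\mathcal P}(K')\supsetneq K'\) in \(L\).

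The main obstacle is exactly this example. In a distributive lattice with all finite joins selected, the classes of top under lattice congruences are all filters. So I may instead need a meet-semilattice that is not a lattice, with a sparser selected distributivity, for instance a chain with an extra join generated as in \cref{lem:generated-distributivity}. The failure should be arranged so that the join \(u=a_1\vee a_2\) with \(a_1,a_2\notin K\) but \(u\in K\) glues \(K'\) upward in \(L\), namely \(a_1\sim a_1\wedge k\) for \(k\in K'\) pushing \(u\sim u\wedge k\), while \(K\) alone cannot see it. The strong case is automatic over \(\mathbf1\), since every distributivity there is strong.
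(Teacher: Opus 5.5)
Your cokernel argument is correct and takes a different route from the paper. \cref{thm:cokernel-factorization-system} proves $\mathcal E={}^\perp\mathcal M$ without using composition, so closure of cokernels under composition follows formally from the lifting argument you give. The paper instead argues directly: it compares $rq$ with the cokernel of $\ker(rq)$, using fibrewise surjectivity and epimorphic cancellation. Your route is cleaner.

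The genuine gap is in the kernel half. You never exhibit a counterexample, and the candidates you suggest provably cannot supply one.

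Over $\mathbf 1$, every principal filter ${\uparrow}b$ of any distributivity form is normal. The relation $x\wedge b=y\wedge b$ has top class exactly ${\uparrow}b$, and it is a distributivity congruence because (D3) gives $U\wedge b=\bigvee_i(A_i\wedge b)$ for every admissible $U=\bigvee_iA_i$. A kernel $K'$ of a kernel $K\subseteq L$ is a filter of $L$, and it carries the distributivity induced from $L$. So the composite is a kernel whenever $K'$ is principal. This rules out every finite $L$, including $\mathbf 2^3$ and small Boolean lattices.

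If all selected joins are finitely indexed, the common-witness relation ($x\wedge k=y\wedge k$ for some $k\in K'$) is also a distributivity congruence with top class $K'$. To see this, replace finitely many witnesses by their meet and apply (D3). So your mechanism with a binary join $u=a_1\vee a_2$ can never enlarge $\operatorname{ncl}_{\mathcal P}(K')$. Making the distributivity of $L$ sparser only makes more filters normal. Chains fail as well: there the common-witness relation identifies only equal elements or members of $K'$, and it respects every selected join.

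What is missing is a non-principal filter combined with an infinitely indexed selected join. Its infinitely many sources must each be collapsed by a different witness, with no common witness in $K'$. The paper takes $S=\{p\}\amalg\mathbb N$ and $M=\mathcal P(S)$. The distributivity is generated by two $(\{*\}\amalg\mathbb N)$-indexed presentations: $S=\{p\}\cup\bigcup_n\{n\}$ and $\{p\}=\{p\}\cup\bigcup_n\varnothing$.

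In that example $L=\{C\mid p\in C\}$ is the kernel of evaluation at $p$. Every nonunit generated presentation has a source not containing $p$, so $L$ inherits only units. Hence $K=\{C\in L\mid \mathbb N\setminus C\text{ finite}\}$ is a kernel in $L$. But any $w$ killing $K$ satisfies $w(\{n\})=w(\{n\})\wedge w(S\setminus\{n\})=w(\varnothing)$. The two selected presentations therefore have termwise equal images, which forces $w(\{p\})=w(S)=\top$, although $\{p\}\notin K$.

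Your heuristic is exactly the right one: the induced distributivity on $K$ forgets joins whose sources leave $K$. It only has force in this infinitary, non-principal setting. Separately, transporting the example to an arbitrary $\C$ via constant forms is unnecessary, since the statement says only that kernels \emph{need not} compose. It also fails for the empty category, where $\DFrm(\C)$ is trivial.
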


\begin{proof}
For the first assertion it suffices to work over the terminal base, where
strongness is automatic.  Put \(S=\{p\}\amalg\mathbb N\) and
\(M=\mathcal P(S)\), with intersection as meet.  For
\(I=\{*\}\amalg\mathbb N\), set
\[
 A_*=\{p\},\quad A_n=\{n\},\qquad
 B_*=\{p\},\quad B_n=\varnothing.
\]
Give \(M\) the distributivity generated by units and the two exact join
presentations
\[
                  S=\bigvee_{i\in I}A_i,\qquad
              \{p\}=\bigvee_{i\in I}B_i.
\]
Point evaluation \(u:M\to\mathbf2\), defined by
\(u(C)=1\) exactly when \(p\in C\), is a distributivity morphism.  Hence
\[
                         L=\ker u=\{C\subseteq S\mid p\in C\}
\]
is a kernel.  Every nonunit presentation generated in \(M\) has at
least one source not containing \(p\), a property preserved by
substitution and intersection.  The induced distributivity on \(L\)
therefore contains only units.

Let
\[
 K=\{C\in L\mid\mathbb N\setminus C\text{ is finite}\}.
\]
The characteristic map \(v:L\to\mathbf2\) preserves finite meets and
the unit distributivity, and \(K=\ker v\).  Thus
\(K\hookrightarrow L\) and \(L\hookrightarrow M\) are kernels.  Their
composite is not.  Indeed, if a morphism \(w:M\to H\) kills \(K\), then
for \(k_n=S\setminus\{n\}\in K\),
\[
 w(\{n\})=w(\{n\})\wedge w(k_n)=w(\varnothing).
\]
The two selected presentations consequently have componentwise equal
images, whence \(w(S)=w(\{p\})\).  Thus \(w(\{p\})=\top\), although
\(\{p\}\notin K\).  No morphism has precisely \(K\) as its kernel.

For cokernels, every cokernel is isomorphic to a fibrewise-surjective
canonical quotient and is the cokernel of its own kernel.  Indeed, if
its congruence is \(\theta\) and \(K\) is the top class, then the least
congruence killing \(K\) is \(\theta\): each inclusion follows from one
of the two defining minimalities.  Let \(q:F\to G\) and \(r:G\to H\)
be successive cokernels, put \(K=\ker(rq)\), and let \(p:F\to P\) be
the cokernel of \(K\).  Since \(\ker q\subseteq K\), write \(p=bq\).
Surjectivity of \(q\) shows that \(b\) kills \(\ker r\), so \(b=cr\).
Conversely, write \(rq=ap\).  Epimorphic cancellation gives
\(ca=1_P\) and \(ac=1_H\).  Hence \(rq\) is isomorphic to \(p\), and is
a cokernel.  The argument is unchanged in the strong category.
\end{proof}

The preceding factorization system does not imply stability of its left
class under pullback.  The obstruction can be seen without any
nontrivial reindexing in the base.

\begin{theorem}\label{thm:cokernels-not-pullback-stable}
Relative cokernels are not stable under pullback in either
\(\DFrm(\C)\) or \(\SDFrm(\C)\).  The failure occurs already when
\(\C=\mathbf1\), and even for pullback along a top-reflecting morphism.
\end{theorem}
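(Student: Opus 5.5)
The plan is to produce one explicit counterexample over $\C=\mathbf1$. There strongness is automatic, as noted in Section~1.4, so a single example covers both $\DFrm(\mathbf1)$ and $\SDFrm(\mathbf1)$. It will consist of a relative cokernel $q:F\to G$ and a top-reflecting morphism $m:H\to G$ whose pullback projection $\pi:P\to H$ is not a relative cokernel.

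By Theorem~\ref{thm:limits-and-monomorphisms}, the pullback $P$ is the sub-meet-semilattice $\{(h,x)\mid m(h)=q(x)\}$ of $H\times F$. A presentation in $P$ is admissible exactly when both of its coordinate projections are admissible. To show that $\pi$ is not a cokernel, I will use the description of cokernels from the proof of Theorem~\ref{thm:composition-normal-maps}: a cokernel is fibrewise surjective, its congruence is generated by its top class, and its codomain carries precisely the distributivity generated by the images of admissible presentations. Surjectivity of $\pi$ comes for free from surjectivity of $q$. So I will aim to break the \emph{distributivity} clause: $H$ should carry an admissible join that is not in the closure of the images under $\pi$ of admissible presentations of $P$.

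\textbf{Step 1 (choosing $G$, $F$, $q$).} Take for $G$ a small lattice, e.g.\ the four-element Boolean algebra $\{0,b,c,1\}$, with the two-source exact join $1=b\vee c$ among its generators. Choose $F$ and a kernel $K\subseteq F$ so that $q=q_K$ (Lemma~\ref{lem:kernel-criterion}). The key design requirement is that the join $1=b\vee c$ in $G$ arises \emph{only} as the image of an admissible $F$-join whose sources are collapsed by $q$ in an incompatible way. For instance, the $F$-join could have sources $x,y$ with $q(x)=b$ and $q(y)=c$, but with $x\wedge y$ not killed while its image $b\wedge c=0$ is forced.

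\textbf{Step 2 (choosing $H$, $m$).} Take $H$ to be a top-reflecting copy of the relevant part of $G$, e.g.\ $H=G$ with a different distributivity, or a chain mapping onto $\{0,b,1\}$. Give $H$ the admissible join $1=b\vee c$, so that $m$ preserves it. Top-reflection of $m$ is checked directly.

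\textbf{Step 3 (the obstruction).} Compute $P$ and its componentwise admissible presentations. Then show that every admissible presentation of $P$ lying over the index shape of $1=b\vee c$ fails to exist in $P$, because a matching $F$-coordinate would need sources that are not admissible in $F$. The same must hold for all presentations produced from these by substitution and (D3), which I will argue by an invariant, in the style of the proof of Theorem~\ref{thm:composition-normal-maps}: every admissible $P$-presentation has some property, e.g.\ one source has $F$-coordinate outside a fixed set, and this property is preserved by the closure rules. Consequently the distributivity generated on $H$ by images under $\pi$ omits $1=b\vee c$. Then $\pi$, although surjective, is not isomorphic to the canonical quotient $P\to P/\theta_{\ker\pi}$, and so it is not a cokernel.

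The main obstacle is Step~1: tuning $F$ so that $q$ genuinely is a cokernel onto $G$ with the required join in $G$'s generated distributivity, while the preimage structure in the pullback cannot supply that join. If the Boolean choice fails, I would first try a countable powerset example like that in Theorem~\ref{thm:composition-normal-maps}, where infinite exact joins give more room for admissibility to be lost under the componentwise rule.
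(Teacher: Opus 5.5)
Your criterion for showing that $\pi$ is not a cokernel is sound. A cokernel is surjective, its congruence is generated by its top class, and its codomain carries exactly the distributivity generated by the images of admissible presentations. The trouble is that the construction you sketch to violate the third clause cannot work, because Steps~1 and~3 contradict each other.

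Suppose, as in Step~1, that the join $1=b\vee c$ of $G$ is the $q$-image of an admissible join $z=x\vee y$ of $F$ with $q(x)=b$ and $q(y)=c$. Suppose also that $m$ sends $1,b,c\in H$ to $1,b,c$. Then $(1,z)$, $(b,x)$, $(c,y)$ all lie in $P$. Moreover $(1,z)=(b,x)\vee(c,y)$ is a final lift in $P$, since any upper bound in $P$ dominates both coordinatewise joins, and both of its projections are admissible. So it is admissible in $P$, and its $\pi$-image is exactly the join you wanted to exclude. The ``incompatible collapse'' of $x\wedge y$ plays no role, since $q(x\wedge y)=b\wedge c$ holds automatically.

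More generally, $q$ is surjective, so units and (D3)-restrictions of $q$-images are again $q$-images. Hence an $H$-join can escape the generated distributivity only if its $m$-image arises in $G$ solely through substitution. There is a further obstruction when $\ker q={\uparrow}k$ is principal, for instance whenever $F$ is finite:
\begin{itemize}
\item $\theta_{\ker q}$ is just the relation $a\wedge k=b\wedge k$ (restrict both presentations by $k$ and use uniqueness of final lifts);
\item $G\cong{\downarrow}k$, and the distributivity of $G$ consists of the admissible presentations of $F$ lying in ${\downarrow}k$;
\item consequently every pullback of $q$ is again a cokernel, since both its congruence and its distributivity come out right.
\end{itemize}
So any counterexample needs a non-principal kernel together with a genuinely substitution-only join, and your proposal constructs neither. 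Step~1, which you yourself flag as the main obstacle, is precisely where the proof is missing.

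The paper attacks the congruence clause instead of the distributivity clause. It takes $H=\{0<1\}$ with only unit presentations. Then $P$ has only unit presentations, and the least congruence killing $\ker\pi_H$ is the common-witness congruence. The middle form is $M=\mathcal P(\{r,p\}\amalg\mathbb N)$ with two selected joins, $T=\{p\}\cup\bigcup_n\{n\}$ and $\{p\}=\{p\}\cup\bigcup_n\varnothing$, and the kernel is the cofinite, hence non-principal, normal filter $D$.

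The cokernel $q$ identifies $T$ with $\{p\}$ only by propagating the identifications $\{n\}\sim\varnothing$ through these infinite joins. In $P$ that propagation is no longer available, and no single $d\in D$ satisfies $T\cap d=\{p\}\cap d$. So $\pi_H$ identifies $(T,0)$ and $(\{p\},0)$, while the cokernel of its kernel does not.

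Your Step~2 idea of giving $H$ a smaller distributivity than $G$ is the right lever. It should be used to starve $P$ of presentations, which breaks the congruence clause, not to equip $H$ with extra joins.
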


\begin{proof}
Put
\[
 S=\{r,p\}\amalg\mathbb N,\qquad
 T=\{p\}\amalg\mathbb N,\qquad
 I=\{*\}\amalg\mathbb N,
\]
and let \(M=\mathcal P(S)\), with intersection as meet.  Set
\[
 A_* = B_*=\{p\},\qquad A_n=\{n\},\qquad B_n=\varnothing .
\]
Equip \(M\) with the distributivity generated by the two exact
presentations
\begin{equation}\label{eq:pullback-counterexample-presentations}
                    T=\bigvee_{i\in I}A_i,
             \qquad \{p\}=\bigvee_{i\in I}B_i .
\end{equation}
Let
\[
 D=\{d\subseteq S\mid r,p\in d\text{ and }
                     \mathbb N\setminus d\text{ is finite}\}.
\]
This filter is normal.  Indeed, its characteristic meet morphism
\(\chi_D:M\to\mathbf2\), defined by \(\chi_D(C)=1\) precisely when
\(C\in D\), becomes a distributivity morphism if \(\mathbf2\) is
equipped with the distributivity generated by
\[
                          0=\bigvee_{i\in I}0.
\]
Both presentations in
\eqref{eq:pullback-counterexample-presentations} map to this
presentation.  Thus \(D=\ker\chi_D\).

Let
\[
                    q:M\longrightarrow Q=M/\theta_D
\]
be the cokernel of \(D\hookrightarrow M\).  For each \(n\), the member
\(d_n=S\setminus\{n\}\) lies in \(D\).  Since
\(d_n\mathrel{\theta_D}S\), meeting with \(\{n\}\) gives
\[
                     \{n\}\mathrel{\theta_D}\varnothing .
\]
Compatibility with the two selected presentations therefore yields
\[
                     q(T)=q(\{p\})=:a.
\]
Moreover \(a\neq\top\), because the top class of \(\theta_D\) is
exactly \(D\), whereas neither \(T\) nor \(\{p\}\) belongs to \(D\).

Give the two-element meet-semilattice \(H=\{0<1\}\) its least
distributivity and define
\[
                   j:H\longrightarrow Q,\qquad
                   j(0)=a,\quad j(1)=\top .
\]
This is a top-reflecting distributivity morphism.  Form the pullback
\[
\begin{array}{ccc}
P=M\times_QH&\xrightarrow{\pi_M}&M\\
{\scriptstyle\pi_H}\downarrow&&\downarrow{\scriptstyle q}\\
H&\xrightarrow{j}&Q .
\end{array}
\]
By Theorem~\ref{thm:limits-and-monomorphisms}, the distributivity of
\(P\) is detected coordinatewise.  Since \(H\) has only unit
presentations, so does \(P\).  The two elements
\[
                         x=(T,0),\qquad y=(\{p\},0)
\]
belong to \(P\) and satisfy \(\pi_H(x)=\pi_H(y)\).

The kernel of \(\pi_H\) is the filter
\[
                   \widetilde D=\{(d,1)\mid d\in D\}.
\]
For a meet-semilattice with only unit presentations, the least
congruence killing a filter is the common-witness congruence
\[
 z\equiv_{\widetilde D}z'
 \quad\Longleftrightarrow\quad
 z\wedge k=z'\wedge k
 \quad\text{for some }k\in\widetilde D .
\]
If this relation identified \(x\) and \(y\), some \(d\in D\) would
satisfy
\[
                         T\cap d=\{p\}\cap d=\{p\}.
\]
This is impossible, since \(d\cap\mathbb N\) is cofinite.  Hence the
cokernel of \(\ker\pi_H\) does not identify \(x\) and \(y\), whereas
\(\pi_H\) does.  Every cokernel is the cokernel of its own kernel, as
shown in Theorem~\ref{thm:composition-normal-maps}; therefore
\(\pi_H\) is not a cokernel.

Over the terminal base every distributivity is strong.  The same
example proves the assertion in both categories.
\end{proof}

\subsection{Extensional and strong extensional presentations}

A composable pair
\begin{equation}\label{eq:abstract-short-exact}
 K\xrightarrow{k}F\xrightarrow{q}Q
\end{equation}
is \emph{\(\Null\)-short exact} when \(k\) is a \(\Null\)-kernel of
\(q\) and \(q\) is a \(\Null\)-cokernel of \(k\).  The analogous
definition uses \(\Null_{\mathrm s}\) in the strong category.

\begin{definition}\label{def:extensional-presentation}
An \emph{extensional presentation} of a distributivity form \(F\) is an
isomorphism class, fixing \(F\), of \(\Null\)-short exact sequences
\eqref{eq:abstract-short-exact} having middle term \(F\).  A
\emph{strong extensional presentation} is defined in the same way in
\((\SDFrm(\C),\Null_{\mathrm s})\).
\end{definition}

The adjective ``extensional'' records that the right-hand term
identifies clusters exactly as forced by those declared indistinguishable
from top.  An extensional presentation is a short-exact datum and should
not be confused with an admissible final presentation, which is a
selected indexed join.

By Theorems~\ref{thm:kernels} and \ref{thm:cokernels}, an extensional
presentation may equivalently be specified by a pullback-stable
fibrewise filter \(K\subseteq F\) satisfying the normal-closure
condition of Lemma~\ref{lem:kernel-criterion}.  Its canonical
representative is
\[
 K\longrightarrow F\longrightarrow F/\theta_K.
\]

\begin{theorem}\label{thm:ordinary-strong-presentations}
For a distributivity form \(F\), the following are equivalent.
\begin{enumerate}[label=\textup{(\roman*)}]
\item The distributivity of \(F\) is strong.
\item Every extensional presentation of \(F\) is canonically a strong
extensional presentation.
\item The canonical extensional presentation
\[
       \{\top\}\longrightarrow F\xrightarrow{1_F}F
\]
is a strong extensional presentation.
\end{enumerate}
Consequently ordinary and strong extensional presentations coincide
precisely on strong distributivity forms.
\end{theorem}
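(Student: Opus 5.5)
The plan is to prove the cycle (i)\(\Rightarrow\)(ii)\(\Rightarrow\)(iii)\(\Rightarrow\)(i). Throughout I use the definition of a strong extensional presentation: a short exact sequence in \(\SDFrm(\C)\) relative to \(\Null_{\mathrm s}\). In particular all three terms must be strong distributivity forms.

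\emph{Step (iii)\(\Rightarrow\)(i).} If \(\{\top\}\to F\xrightarrow{1_F}F\) is a strong extensional presentation, then its middle term \(F\) is an object of \(\SDFrm(\C)\). Hence the distributivity of \(F\) is strong. This is immediate from the definitions.

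\emph{Step (ii)\(\Rightarrow\)(iii).} I first check that \(\{\top\}\to F\xrightarrow{1_F}F\) is an ordinary extensional presentation.
\begin{enumerate}[label=\textup{(\alph*)}]
\item By Theorem~\ref{thm:kernels}, the kernel of \(1_F\) is the subform of top clusters, carrying the induced distributivity.
\item Conversely, the least distributivity congruence killing \(\{\top\}\) is equality, so \(\operatorname{ncl}_{\mathcal P}(\{\top\})=\{\top\}\).
\item Lemma~\ref{lem:kernel-criterion} then shows that \(1_F\) is the cokernel, up to the identification \(F/\Delta\cong F\).
\end{enumerate}
Applying (ii) to this particular presentation gives (iii).

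\emph{Step (i)\(\Rightarrow\)(ii).} Let \(K\to F\to F/\theta_K\) be the canonical representative of an extensional presentation, with \(F\) strong. I verify four things.
\begin{enumerate}[label=\textup{(\alph*)}]
\item \(K\) with the induced distributivity is strong. This is the stability argument in Theorem~\ref{thm:kernels}: \(K\) is closed under inverse images, so Beck--Chevalley transforms of presentations lying in \(K\) stay in \(K\).
\item The quotient \(F/\theta_K\) is strong. Its distributivity in \(\DFrm(\C)\) is generated by the images of the admissible presentations of \(F\), and that class is already closed under (D4). Indeed, the image of a Beck--Chevalley transform is the Beck--Chevalley transform of the image, because \(q\) commutes with inverse images. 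By (D4) in \(F\), that transform is again an image of an admissible presentation.
\item The (D1)--(D3) closure of a class stable under (D4) remains stable under (D4). I would check this by induction along the closure.
\begin{itemize}
\item Units and bijective reindexing commute with pullback.
\item For substitution, pullbacks paste.
\item For (D3), pulling back \(U\wedge C\) along \(h\) gives \(h^*U\wedge h^*C\). This is the (D3)-restriction of the pulled-back presentation, using \(h_i^*f_i^*=f_i'^*h^*\).
\end{itemize}
Consequently the ordinary generated distributivity coincides with the strong closure used in Theorem~\ref{thm:cokernels}. Hence the ordinary cokernel \(q\) is literally the \(\Null_{\mathrm s}\)-cokernel.
\item Since \(\SDFrm(\C)\) is full in \(\DFrm(\C)\) and the kernel constructions agree, the kernel property transfers as well.
\end{enumerate}
Together these show that the sequence is \(\Null_{\mathrm s}\)-short exact.

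I also need the converse passage, that a strong extensional presentation is an ordinary one. This should follow because kernels and cokernels in \(\SDFrm(\C)\) are computed by the same constructions, again by the agreement of the two closures when the generators are (D4)-stable. That agreement yields the "consequently" clause.

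I expect step (c) to be the main obstacle. The substitution case needs the pullback of a composite indexing family to be identified with composites of pullbacks, up to bijective reindexing. Transfinite closure stages also need the induction to pass through unions. If the induction proves awkward, I would instead argue minimally: the class of admissible presentations of \(F/\theta_K\) whose Beck--Chevalley transforms are all admissible is itself a distributivity containing the generators. By minimality it is then everything.
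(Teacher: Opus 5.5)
Your proposal is correct and follows essentially the same route as the paper. You prove the cycle (i)$\Rightarrow$(ii)$\Rightarrow$(iii)$\Rightarrow$(i), and (i)$\Rightarrow$(ii) rests on the same two facts the paper uses: induced kernel subforms inherit (D4), and the quotient distributivity generated by images of admissible presentations is already (D4)-stable, because base change sends image generators to image generators and commutes with units, substitution and Frobenius restriction. Your minimality argument for this last point, and your explicit check that $\{\top\}\to F\xrightarrow{1_F}F$ is an ordinary extensional presentation, spell out details the paper treats as immediate.
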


\begin{proof}
The inclusion \(\SDFrm(\C)\hookrightarrow\DFrm(\C)\) creates the
relative kernels and cokernels of Theorems~\ref{thm:kernels} and
\ref{thm:cokernels}: induced subforms inherit (D4), while the ordinary
distributivity generated on a canonical quotient is already stable
under (D4).  Indeed, base change carries an image generator to an image
generator, and it commutes with units, substitution, and Frobenius
restriction.  Thus an
extensional presentation of a strong middle term is isomorphic to its
canonical representative, all three terms of which are strong.  This
proves \textup{(i)}\(\Rightarrow\)\textup{(ii)}; the next implication is
immediate.  Condition \textup{(iii)} can hold in the strong category
only if its middle object \(F\) is strong, giving the converse.
\end{proof}

\subsection{Closure operators and Hausdorff objects}

Many familiar extensional presentations are more naturally displayed
by a closure operator.  Call a right adjoint \(s\) to \(q:F\to Q\)
\emph{indexed} when every \(q_X\) has right adjoint \(s_X\) and
\[
                         f^*s_Y=s_Xf^*
\]
for all \(f:X\to Y\).

\begin{theorem}\label{thm:adjoint-closure}
Suppose that the quotient \(q:F\to Q\) in an extensional presentation
of \(F\) has an indexed right adjoint \(s\).  Then
\[
                          c_X=s_Xq_X
\]
is an extensive, monotone, idempotent, finite-meet-preserving closure
operator on every fibre, and it commutes with inverse image.  Moreover,
\[
 K_X=\{A\in F_X\mid c_X(A)=\top_X\},
\]
and the extensional presentation is isomorphic to
\begin{equation}\label{eq:closure-extensional-presentation}
 K\longrightarrow F
  \xrightarrow{\,A\mapsto c(A)\,}\operatorname{Fix}(c),
\end{equation}
where the fixed-point form carries the distributivity transported from
\(Q\).
\end{theorem}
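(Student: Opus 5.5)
I will derive everything from the adjunction $q_X\dashv s_X$ together with the fact that $q$ is a relative cokernel. By the proof of Theorem~\ref{thm:composition-normal-maps}, such a cokernel is fibrewise surjective.

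\textbf{Properties of $c$.} Fibrewise surjectivity of $q_X$ makes the counit $q_Xs_X=1$ an identity, so $s_X$ is an order embedding. Extensivity $A\leqslant s_Xq_X(A)$ is the unit. Idempotence follows from $c c=s(qs)q=sq$. Monotonicity is clear from composing monotone maps. The map $q_X$ preserves finite meets because it is a morphism of distributivity forms. The right adjoint $s_X$ preserves all existing meets, in particular top and binary meets. Hence $c_X$ preserves finite meets. Commutation with inverse image combines naturality of $q$, namely $q_Xf^*=f^*q_Y$, with the indexed condition $f^*s_Y=s_Xf^*$, giving $f^*c_Y=c_Xf^*$.

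\textbf{The kernel.} By exactness, $K$ is the kernel of $q$, so by Theorem~\ref{thm:kernels} $K_X=\{A\mid q_X(A)=\top\}$. Since $s_X$ preserves top and is injective, $q_X(A)=\top$ holds iff $s_Xq_X(A)=s_X(\top)=\top$, that is, iff $c_X(A)=\top$.

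\textbf{The isomorphism with the fixed-point presentation.} The fixed points of $c_X$ are exactly the image of $s_X$, because $c s=s q s=s$. So $s_X$ restricts to an order isomorphism $Q_X\to\operatorname{Fix}(c_X)$ with inverse $q_X$. These isomorphisms commute with inverse images by the indexed condition, so $\operatorname{Fix}(c)$ is closed under $f^*$ and under finite meets. Thus $s$ gives an isomorphism of indexed meet-semilattices $Q\cong\operatorname{Fix}(c)$.

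I then transport the distributivity of $Q$ along this isomorphism. I define a final presentation in $\operatorname{Fix}(c)$ to be admissible exactly when its image under $q$ is admissible in $Q$. Since isomorphisms of forms preserve and reflect final lifts, this makes $s\colon Q\to\operatorname{Fix}(c)$ an isomorphism in $\DFrm(\C)$, and in $\SDFrm(\C)$ when $Q$ is strong. Under this isomorphism, $q$ corresponds to $A\mapsto s q(A)=c(A)$. Hence the sequence $K\to F\to Q$ is isomorphic, fixing $F$ and $K$, to \eqref{eq:closure-extensional-presentation}.

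\textbf{Main obstacle.} The step needing the most care is the first one: that $q$ is fibrewise surjective, so that $qs=1$. Without this, $s$ need not be injective, and the kernel description and the isomorphism both fail. To establish it, I will invoke the observation in the proof of Theorem~\ref{thm:composition-normal-maps} that every relative cokernel is isomorphic to a canonical fibrewise-surjective quotient $F\to F/\theta$. I will then check that composing with that isomorphism preserves the existence of an indexed right adjoint, which holds because isomorphisms have themselves as indexed adjoints.

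The rest is formal. The only remaining subtlety is that the transported admissible joins in $\operatorname{Fix}(c)$ are computed as $c$ applied to joins in $F$, not as joins in $F$ itself. Defining the distributivity by transport avoids having to verify this directly.
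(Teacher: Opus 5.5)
Your proposal is correct and follows essentially the same route as the paper. Both arguments reduce to the fibrewise-surjective canonical quotient, so that $q_Xs_X=1$. Both then derive the closure properties and the kernel description from the adjunction and the indexed condition. Finally, both transport the distributivity of $Q$ along the order isomorphisms $s_X\colon Q_X\to\operatorname{Fix}(c_X)$.

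Your final paragraph handles a harmless detour. A relative cokernel differs from the canonical quotient only by an isomorphism of $\DFrm(\C)$, and such an isomorphism is fibrewise bijective. So the given $q$ is already fibrewise surjective, and no transfer of the right adjoint is needed.
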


\begin{proof}
A cokernel is isomorphic to the fibrewise surjective quotient in
Theorem~\ref{thm:cokernels}; we may therefore assume that every \(q_X\)
is surjective.  For an adjunction of posets \(q_X\dashv s_X\),
surjectivity gives \(q_Xs_X=1\).  Consequently \(s_Xq_X\) is extensive
and idempotent.  It is monotone, and it preserves finite meets because
\(q_X\) does and the right adjoint \(s_X\) preserves all existing meets.
Naturality of \(q\) and the indexed condition on \(s\) give
\[
 c_Xf^*=s_Xq_Xf^*=s_Xf^*q_Y=f^*s_Yq_Y=f^*c_Y.
\]
Since \(s_X\) preserves top and \(q_Xs_X=1\), one has
\(q_X(A)=\top\) if and only if \(c_X(A)=\top\).  This identifies the
kernel.  Finally, \(s_X\) identifies \(Q_X\) order-isomorphically with
the fixed points of \(c_X\), and these isomorphisms commute with inverse
image.  Transporting the distributivity gives
\eqref{eq:closure-extensional-presentation}.
\end{proof}

Conversely, an indexed productive closure operator \(c\) gives an
extensional presentation whenever \(F\to\operatorname{Fix}(c)\)
preserves the distributivity and is the cokernel of its top class.  We
call such a closure operator \emph{exact for the distributivity}.  The
qualifier is necessary: over the terminal base an arbitrary frame
nucleus need not be determined by its top class.  Theorem
\ref{thm:frame-topologies} identifies the nuclei which are.

Let \(\mathbb E\) be an extensional presentation of \(F\), with
canonical representative
\[
                  K\longrightarrow F\longrightarrow F/\theta_K.
\]
The kernel is the part of the datum which plays the role of a system of
neighbourhoods.

\begin{definition}\label{def:hausdorff-object}
Suppose that the fibre \(F_X\) has a least cluster \(\bot_X\).  The
object \(X\) is \emph{Hausdorff for \(\mathbb E\)} when \(\bot_X\) is
the greatest lower bound of \(K_X\).  Equivalently,
\[
 C\leqslant A\text{ for every }A\in K_X
 \quad\Longrightarrow\quad C=\bot_X.
\]
When the meet exists, put
\[
                  \rho_{\mathbb E}(X)=\bigwedge_{A\in K_X}A
\]
and call it the \emph{separation radical}.  The extensional presentation
is \emph{Hausdorff} when every object of its base is Hausdorff.
\end{definition}

The first formulation does not assume arbitrary meets in the fibres and
does not refer to a chosen representative of the quotient.

\begin{lemma}\label{lem:hausdorff-intrinsic}
Hausdorffness depends only on the extensional presentation, not on its
representative sequence.  If \(K_X\) is the principal filter generated
by \(R_X\), then \(\rho_{\mathbb E}(X)=R_X\); hence \(X\) is Hausdorff
if and only if \(R_X=\bot_X\).
\end{lemma}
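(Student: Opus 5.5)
The proof splits into two parts: independence of the representative, and the computation of the separation radical when \(K_X\) is principal.

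\emph{Independence of the representative.} An extensional presentation is an isomorphism class of \(\Null\)-short exact sequences \(K\xrightarrow{k}F\xrightarrow{q}Q\) with the middle term \(F\) held fixed. Let \(K\to F\to Q\) and \(K'\to F\to Q'\) be two representatives, linked by an isomorphism of sequences that is the identity on \(F\). Then \(k'=k\circ\varphi\) for an isomorphism \(\varphi:K'\to K\), so \(k\) and \(k'\) have the same image in each fibre \(F_X\). In canonical form that image is the subset
\[
K_X=\{A\in F_X\mid q_X(A)=\top_X\},
\]
by Theorem~\ref{thm:kernels}. Hence the subset \(K_X\subseteq F_X\) depends only on \(\mathbb E\); equivalently, it is the top class of \(q_X\) for any representative quotient. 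The Hausdorff condition refers only to this subset and to the order of \(F_X\), namely
\[
C\leqslant A\text{ for all }A\in K_X \;\Longrightarrow\; C=\bot_X .
\]
It therefore depends only on \(\mathbb E\). The existence and value of \(\bigwedge K_X\) are likewise intrinsic.

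\emph{Computing the radical.} Suppose \(K_X={\uparrow}R_X\), the principal filter generated by \(R_X\). Then \(R_X\in K_X\), and \(R_X\leqslant A\) for every \(A\in K_X\). So \(R_X\) is a lower bound of \(K_X\) that belongs to \(K_X\), which makes it the greatest lower bound. Hence \(\rho_{\mathbb E}(X)=R_X\).

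\emph{The Hausdorff criterion.} The first formulation in Definition~\ref{def:hausdorff-object} says exactly that \(\bot_X\) is the only lower bound of \(K_X\). If that holds, then \(R_X\), being a lower bound, equals \(\bot_X\). Conversely, if \(R_X=\bot_X\), then any lower bound \(C\) of \(K_X\) satisfies \(C\leqslant R_X=\bot_X\), so \(C=\bot_X\).

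The only delicate step is the first: one must confirm that isomorphism of presentations fixing \(F\) really preserves the image subsets, not just \(K\) up to isomorphism. This follows from composing with the comparison isomorphism. Everything else is elementary order theory.
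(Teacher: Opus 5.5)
Your proposal is correct and follows essentially the same route as the paper: the comparison isomorphism fixing \(F\) forces the kernel terms to have the same image subset \(K_X\subseteq F_X\), and hence the same lower bounds. Then \(R_X\) is the greatest lower bound of \(K_X\) because it lies in \(K_X\) and is below all its members; you also spell out the final equivalence (\(X\) Hausdorff if and only if \(R_X=\bot_X\)), which the paper leaves implicit.
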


\begin{proof}
An isomorphism of representatives fixes \(F\), so its kernel terms
determine the same subform and have the same lower bounds.  If
\(K_X=\{A\mid R_X\leqslant A\}\), then \(R_X\) belongs to \(K_X\) and
is below all its members, and is therefore their greatest lower bound.
\end{proof}

In a non-Archimedean topological group this radical is the intersection
of the open subgroups, and in a linearly topologized ring it is the
intersection of the open ideals.  Thus the definition recovers ordinary
Hausdorffness in the algebraic applications below.  It is a relative
separation notion and should not be confused with separation for a
Lawvere--Tierney topology, which is expressed by closedness of a
diagonal.  Without additional idempotence assumptions, the radical need
not belong to the kernel filter and quotienting by it need not produce a
Hausdorff reflection.

\section{Applications}
\label{sec:applications}

We now apply extensional presentations to the principal geometric,
order-theoretic, and algebraic forms.  The first theorem is the
motivation for the definition.

\subsection{Grothendieck topologies and the sieve form}

We now prove the central characterization.  The form \(\Sieve\) is
always understood to carry the maximal distributivity of
Lemma~\ref{lem:sieve-distributivity}.

\begin{theorem}\label{thm:main}
For every small category \(\C\), Grothendieck topologies on \(\C\) are
naturally in bijection with extensional presentations of the maximally
distributive form of sieves.  The representative associated with
\(J\) is
\begin{equation}\label{eq:main-short-exact}
 J\longrightarrow\Sieve
 \xrightarrow{\,q_J\,}\Sat_J,
 \qquad q_J(S)=j_J(S).
\end{equation}
Thus the kernel consists of the covering sieves, while the cokernel
identifies two sieves precisely when they cover the same arrows.  If
\(\C\) has pullbacks, the same correspondence is a bijection with strong
extensional presentations of the strong sieve form.
\end{theorem}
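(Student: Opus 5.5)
By the discussion after Definition~\ref{def:extensional-presentation}, an extensional presentation of \(\Sieve\) is the same thing as a pullback-stable fibrewise filter \(K\subseteq\Sieve\) with \(\operatorname{ncl}_{\mathcal P}(K)=K\) (Lemma~\ref{lem:kernel-criterion}). The plan is therefore to show that, for the maximal distributivity of \(\Sieve\), these normally closed filters are exactly the Grothendieck topologies. Afterwards we identify the canonical cokernel with \(q_J\).

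\emph{Topologies are kernels.} Given \(J\), I will show that \(q_J:\Sieve\to\Sat_J\), \(S\mapsto j_J(S)\), is a morphism in \(\DFrm(\C)\).

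\begin{itemize}
\item It preserves top and binary meets, and it commutes with inverse images by \eqref{eq:saturation-base-change}.
\item It preserves every admissible join. Using \eqref{eq:push-saturation} termwise, \(j_J(\bigcup_i f_iS_i)=j_J(\bigcup_i f_i j_J(S_i))\), which is the admissible join \eqref{eq:saturation-final-lift} in \(\Sat_J\).
\end{itemize}

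Since \(j_J(S)\) covers iff \(S\) covers, and a saturated sieve is maximal iff it covers, we get \(\ker q_J=J\). So \(J\) is a kernel by Theorem~\ref{thm:kernels}.

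It remains to see that \(q_J\) is the cokernel of \(J\hookrightarrow\Sieve\). Consider \(r:\Sieve\to H\) killing \(J\). By minimality, \(\theta_J\) is contained in the congruence of \(q_J\). For the reverse inclusion I show \(S\mathrel{\theta_J}j_J(S)\) for every sieve \(S\). Write \(j_J(S)=\bigcup_{f\in j_J(S)} f\cdot\top\), an admissible join of principal pieces, and compare it with the join \(\bigcup_{f} f\,(f^*S)\), which equals \(S\) since \(f(f^*S)\subseteq S\) and \(f\in S\) contributes \(f\). For \(f\in j_J(S)\) we have \(f^*S\in J\), hence \(f^*S\mathrel{\theta_J}\top\). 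Compatibility (Definition~\ref{def:distributivity-congruence}) then gives \(S\mathrel{\theta_J}j_J(S)\).

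Hence \(\Sieve/\theta_J\cong\Sat_J\) fibrewise. The distributivity generated on the quotient by images of all sinks is the one where every sink is admissible with join \eqref{eq:saturation-final-lift}, by Theorem~\ref{thm:topology-gives-two-forms}. This yields the sequence \eqref{eq:main-short-exact}.

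\emph{Normally closed filters are topologies.} Let \(K\) be a pullback-stable filter with \(\operatorname{ncl}_{\mathcal P}(K)=K\). Top membership, pullback stability and upward closure are immediate. For locality, take \(B\in K_X\) and a sieve \(R\) with \(f^*R\in K\) for all \(f\in B\). Since \(f(f^*R)\subseteq R\cap fB\)-type pieces, we can write
\[
 R\cap B=\bigcup_{f\in B} f\,(f^*R)
 \quad\text{and}\quad
 B=\bigcup_{f\in B} f\,\top .
\]
Both are admissible presentations with the same indexing. Termwise \(f^*R\mathrel{\theta_K}\top\), so \(R\cap B\mathrel{\theta_K}B\mathrel{\theta_K}\top\). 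Therefore \(R\cap B\in\operatorname{ncl}_{\mathcal P}(K)=K\), and upward closure gives \(R\in K\). So \(K\) is a Grothendieck topology.

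The two assignments are mutually inverse: the kernel of the sequence for \(J\) is \(J\), and the normal closure uniquely determines the presentation. Naturality is by construction. The main obstacle is the cokernel identification \(\theta_J=\ker q_J\), in particular keeping track of which sinks are admissible in the quotient. This is handled by the two "compare presentations with equal indexing" tricks above, which is exactly where maximality of the distributivity of \(\Sieve\) is used.

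For the strong case with pullbacks, \(\Sieve\) is strong by Lemma~\ref{lem:sieve-distributivity}. Theorem~\ref{thm:ordinary-strong-presentations} then makes ordinary and strong extensional presentations coincide, so the bijection transfers.
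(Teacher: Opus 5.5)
Your proposal is correct and follows essentially the same route as the paper. The push identity shows that \(q_J\) is a distributivity morphism with kernel \(J\). The same pair of equally indexed presentations \(B\cap R=\nabla_{f\in B}(f^*R,f)\) and \(B=\nabla_{f\in B}(\top,f)\) (taking \(B=j_J(S)\), \(R=S\) for the cokernel step) gives both locality and \(S\mathrel{\theta_J}j_J(S)\). The strong case is then handled by Theorem~\ref{thm:ordinary-strong-presentations}. The only difference is cosmetic: you run these arguments through the least congruence \(\theta_K\) and the normal-closure criterion, whereas the paper applies the quotient map \(q\) and an arbitrary test morphism \(r\) directly.
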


\begin{proof}
Start with an extensional presentation of \(\Sieve\), represented by
\[
 K\longrightarrow\Sieve\xrightarrow{q}Q.
\]
Using Theorem~\ref{thm:kernels}, we may identify \(K\) with the top
class of \(q\).  It contains every maximal sieve, is stable under
inverse image, and is a fibrewise filter, so it is upward closed and
closed under finite intersections.  It remains to prove Grothendieck
locality.

If \(q\) is null, then \(K=\Sieve\), which is the chaotic
Grothendieck topology.  We may therefore suppose that \(q\) is a
non-null morphism of distributivity forms.

Let \(B\in K_X\), let \(R\) be a sieve on \(X\), and suppose that
\(f^*R\in K_{\operatorname{dom}f}\) for every \(f\in B\).  In the
maximally distributive form of sieves there are admissible final
presentations
\[
 \begin{aligned}
 S:=B\cap R
   &=\nabla_{f\in B}(f^*R,f),\\
 B&=\nabla_{f\in B}(\top_{\operatorname{dom}f},f).
 \end{aligned}
\]
Indeed, both equalities follow directly from union formula
\eqref{eq:sieve-final-lift}.  The corresponding source clusters have
the same image under \(q\), since each \(f^*R\) lies in the kernel.
Preservation of the two presentations therefore gives
\[
                         q(B\cap R)=q(B)=\top.
\]
Hence \(B\cap R\in K_X\), and upward closure gives \(R\in K_X\).
This is the transitivity axiom, so the fibres of \(K\) are precisely the
covering sieves of a Grothendieck topology.

Conversely, let \(J\) be a Grothendieck topology.  Section
\ref{sec:distributivity-forms} constructed the covering form \(J\), the
saturation form \(\Sat_J\), and the operators \(j_J\).  The map
\(q_J(S)=j_J(S)\) preserves top, finite meets, and inverse images.  It
also preserves every admissible final lift: by
\eqref{eq:push-saturation},
\[
 \begin{aligned}
 q_J\!\left(\nabla_i(S_i,f_i)\right)
 &=j_J\!\left(\bigcup_i f_iS_i\right)\\
 &=j_J\!\left(\bigcup_i f_i j_J(S_i)\right)
  =\nabla_i(q_J(S_i),f_i),
 \end{aligned}
\]
and the final presentation on the right is admissible in \(\Sat_J\).
Thus \(q_J\) is a morphism of distributivity forms.  Moreover,
\[
 j_J(S)=\top_X
 \quad\Longleftrightarrow\quad
 1_X\in j_J(S)
 \quad\Longleftrightarrow\quad
 S\in J(X),
\]
so its kernel is the covering form \(J\), with the induced
distributivity.

We prove directly that \(q_J\) is the cokernel of this kernel.  Let
\(r:\Sieve\to H\) be a morphism whose restriction to the covering form
is null.  Fix a sieve \(S\) on \(X\), and
put \(B=j_J(S)\).  For every \(f\in B\), the sieve \(f^*S\) covers by
definition.  Since \(S\subseteq B\), the two admissible presentations
\[
 S=B\cap S=\nabla_{f\in B}(f^*S,f),
 \qquad
 B=\nabla_{f\in B}(\top,f)
\]
have source clusters with equal images under \(r\).  Hence
\[
                              r(S)=r(j_J(S)).
\]
If \(j_J(S)=j_J(T)\), it follows that \(r(S)=r(T)\).  Thus \(r\)
factors uniquely through the surjection \(q_J\).  The factor preserves
finite initial lifts, and it preserves the quotient distributivity
because the latter is generated by the images of the presentations of
\(\Sieve\).  This proves the relative cokernel universal property, and
\eqref{eq:main-short-exact} is \(\Null\)-short exact.

The two assignments are inverse.  Beginning with \(J\), the kernel of
\(q_J\) recovers exactly \(J\).  Beginning with an extensional
presentation of \(\Sieve\), its kernel gives the Grothendieck topology
just constructed;
both its original quotient and \(q_J\) are cokernels of the same
inclusion \(J\to\Sieve\), and are therefore uniquely isomorphic under
\(\Sieve\).  This proves naturality.  When \(\C\) has pullbacks, the
maximal sieve distributivity and the two forms associated with \(J\)
are strong by Lemma~\ref{lem:sieve-distributivity} and Theorem
\ref{thm:topology-gives-two-forms}; Theorem
\ref{thm:ordinary-strong-presentations} gives the final assertion.
\end{proof}

The quotient in \eqref{eq:main-short-exact} has the indexed right
adjoint which includes saturated sieves into all sieves.  Hence
Theorem~\ref{thm:adjoint-closure} recovers the usual nucleus \(j_J\)
from the extensional presentation.  The proof above also shows directly
that local equivalence is the least distributivity congruence forcing
every covering sieve to be top.

\subsection{Variations and singleton-generated topologies}

Assume in this subsection that \(\C\) has pullbacks.  For an arrow
\(f:Y\to X\), write
\[
                  \prin f=\{fg\mid \operatorname{cod}g=Y\}
\]
for its principal sieve.  Two arrows generate the same principal sieve
precisely when each factors through the other.  Pullbacks ensure that
inverse images and finite intersections of principal sieves are
principal.  They therefore form a subform
\(\Var\hookrightarrow\Sieve\), called the \emph{form of variations}.
Give it the strong unary distributivity generated by
\[
                  \prin{fg}=\nabla(\prin g,f)
\]
for composable arrows \(g,f\), together with (D1)--(D4).  In
particular, \(\prin f=\nabla(\top,f)\).

A Grothendieck topology is \emph{singleton-generated} when every
covering sieve contains a covering principal sieve.  It is useful to
encode such a topology by its covering arrows.  Call a class \(W\) of
arrows a \emph{saturated singleton coverage} when:
\begin{enumerate}[label=\textup{(S\arabic*)}]
\item identities belong to \(W\);
\item \(W\) is stable under pullback;
\item \(W\) is closed under composition;
\item \(fg\in W\) implies \(f\in W\).
\end{enumerate}
Its associated family of sieves is
\[
 J_W(X)=\{S\in\Sieve_X\mid
   \prin w\subseteq S\text{ for some }w\in W
   \text{ with codomain }X\}.
\]

\begin{theorem}\label{thm:singleton-topologies}
For a category \(\C\) with pullbacks, singleton-generated
Grothendieck topologies on \(\C\) are naturally in bijection with
extensional presentations, equivalently strong extensional
presentations, of the unary distributivity form \(\Var\).  If \(W\) is
the corresponding saturated singleton coverage, a representative is
\begin{equation}\label{eq:variation-short-exact}
 E_W\longrightarrow\Var
 \xrightarrow{\,q_W\,}Q_W,
\end{equation}
where
\[
 E_W=\{\prin w\mid w\in W\},
 \qquad
 q_W(\prin f)=j_{J_W}(\prin f),
\]
and \(Q_W\) consists of the saturated principal sieves
\(j_{J_W}(\prin f)\), with the quotient unary distributivity.
\end{theorem}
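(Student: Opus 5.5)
The plan is to mirror the proof of Theorem~\ref{thm:main}, with saturated singleton coverages as intermediaries, and to establish two bijections. The first is between singleton-generated topologies and saturated singleton coverages. The second is between saturated singleton coverages and extensional presentations of \(\Var\).

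For the first bijection, send a topology \(J\) to \(W_J=\{w\mid \prin w\in J(\operatorname{cod}w)\}\). Axiom (S1) holds because \(\prin{1}\) is maximal. (S2) holds because \(h^*\prin w=\prin{w'}\) for the pullback \(w'\). (S4) holds by upward closure, since \(\prin{fg}\subseteq\prin f\). For (S3), apply locality to \(\prin f\): for each \(fh\in\prin f\), the sieve \((fh)^*\prin{fg}\) contains \(h^*g^*\prin{fg}\supseteq h^*\prin{1}\) after pulling back along \(g\). More precisely, \(f^*\prin{fg}\supseteq\prin g\) covers, so \(\prin{fg}\) covers by locality. Conversely, given \(W\), check that \(J_W\) is a Grothendieck topology. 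Upward closure and stability are immediate from (S2). Locality follows from (S3): if \(\prin w\subseteq B\) and \(w^*R\supseteq\prin v\) with \(v\in W\), then \(\prin{wv}\subseteq R\). The two constructions are mutually inverse, and (S4) is exactly what makes \(W_{J_W}=W\).

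For the second bijection, start with an extensional presentation \(K\to\Var\to Q\). Set \(W=\{w\mid \prin w\in K\}\). Since \(K\) is a pullback-stable fibrewise filter containing top, this gives (S1), (S2) and (S4). The key step is (S3), the analogue of the locality argument in Theorem~\ref{thm:main}, using the two admissible unary presentations
\[
\prin{fg}=\nabla(\prin g,f),\qquad \prin f=\nabla(\top,f).
\]
If \(g\in W\), the sources \(\prin g\) and \(\top\) have equal images under \(q\). Preservation then gives \(q(\prin{fg})=q(\prin f)=\top\) when \(f\in W\), so \(fg\in W\).

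Conversely, given \(W\), show that \(q_W\) is a morphism preserving the generating presentations. This uses \(j(\prin{fg})=j(f\,j(\prin g))\) from \eqref{eq:push-saturation}. One must also check that \(j_{J_W}(\prin f)\) is again principal, so that \(Q_W\) is a subform of \(\Var\); this needs verifying. The kernel of \(q_W\) is \(E_W\), by the equivalence \(j(S)=\top\iff S\in J_W\) together with (S4). For the cokernel property, take \(r\) killing \(E_W\) and show \(r(\prin f)=r(\prin{f'})\) whenever \(j(\prin f)=j(\prin{f'})\).

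I expect this cokernel property to be the main obstacle, because only unary presentations are available. The union trick of Theorem~\ref{thm:main} must be replaced by a single covering arrow. The approach I would try first is the following. If \(j(\prin f)\ni f'\), then \(f'^*\prin f\supseteq\prin w\) for some \(w\in W\), so \(f'w\) factors through \(f\). Then \(r(\prin{f'w})=r(\nabla(\prin w,f'))=r(\nabla(\top,f'))=r(\prin{f'})\), and by symmetry, meets and monotonicity the identification follows. The step \(\prin{f'w}\subseteq\prin f\subseteq\prin{f'}\) with \(r\) monotone sandwiches \(r(\prin f)\).

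Finally, the equivalence with strong presentations follows from Theorem~\ref{thm:ordinary-strong-presentations}, because \(\Var\) carries a strong distributivity. Naturality and the fact that the assignments are inverse are argued exactly as at the end of the proof of Theorem~\ref{thm:main}, via uniqueness of cokernels.
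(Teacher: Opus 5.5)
Your route is essentially the paper's. You pass through the coverage \(W\), prove (S3) by applying \(q\) to \(\prin f=\nabla(\top,f)\) and \(\prin{fg}=\nabla(\prin g,f)\), and use \eqref{eq:push-saturation} to make \(q_W\) a morphism. The cokernel argument is also the same: a covering arrow \(w\) with \(f'w\) factoring through \(f\). Your explicit check that \(W_J\) satisfies (S1)--(S4) for a singleton-generated \(J\), with (S3) from locality, fills in a point the paper leaves implicit. Two corrections are needed, however.

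\textbf{The principality step is false.} The step you flag as ``needs verifying'' should be dropped: \(j_{J_W}(\prin f)\) need not be principal, so \(Q_W\) is not in general a subform of \(\Var\). For a counterexample, take the thin category on the four-element lattice with \(a<y_1<x\), \(a<y_2<x\) and \(y_1\wedge y_2=a\); its pullbacks are meets. Let \(W\) consist of the identities together with the arrows \(a\to y_1\) and \(a\to y_2\).
\begin{itemize}
\item One checks directly that \(W\) satisfies (S1)--(S4).
\item \(J_W(x)\) contains only the maximal sieve.
\item \(j_{J_W}(\prin{a\to x})\) is the sieve generated by \(y_1\to x\) and \(y_2\to x\), which is not principal.
\end{itemize}
Nothing in the proof depends on principality. \(Q_W\) only has to be a form with finite indexed meets, and the saturations of principal sieves are closed under top, meets and inverse images. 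This holds because \(j_{J_W}\) preserves meets and commutes with inverse images, and because pullbacks make \(\prin f\cap\prin g\) and \(h^*\prin f\) principal. This is exactly how the paper obtains \(Q_W\), as a family of saturated sieves inside \(\Sat_{J_W}\), not inside \(\Var\).

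\textbf{The sandwich presupposes an inclusion you do not have.} You write \(\prin{f'w}\subseteq\prin f\subseteq\prin{f'}\), but \(f'\in j_{J_W}(\prin f)\) does not give \(\prin f\subseteq\prin{f'}\). What your computation actually proves is that \(r(\prin{f'})=r(\prin{f'w})\leqslant r(\prin f)\) whenever \(f'\in j_{J_W}(\prin f)\). If the two saturations are equal, this applies in both directions, since \(f\in j_{J_W}(\prin f)\) and \(f'\in j_{J_W}(\prin{f'})\). That symmetric inequality is the paper's argument. Alternatively, first replace the pair by its meet, which is principal and has the same saturation; after that the sandwich is legitimate.
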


\begin{proof}
First let \(W\) be a saturated singleton coverage.  The families
\(J_W(X)\) form a Grothendieck topology.  Maximal sieves cover by (S1),
and stability follows from (S2).  For transitivity, suppose that a
covering sieve \(B\) contains \(\prin w\), where \(w\in W\), and that
every pullback of \(R\) along an arrow of \(B\) covers.  In particular,
\(w^*R\) contains \(\prin u\) for some \(u\in W\).  Then \(wu\in R\)
and \(wu\in W\) by (S3), so \(R\) covers.  Moreover,
\begin{equation}\label{eq:principal-cover-test}
                  \prin f\in J_W(X)\quad\Longleftrightarrow\quad f\in W.
\end{equation}
Indeed, if \(\prin w\subseteq\prin f\), then \(w=fv\) for some \(v\),
and (S4) gives \(f\in W\); the converse is immediate.

The clusters \(E_W\) contain top and are stable under inverse image.
They are upward closed among principal sieves by (S4).  They are closed
under meets: if \(f,g\in W\) have a common codomain and \(p\) is the
pullback of \(g\) along \(f\), then \(p\in W\), \(fp\in W\), and
\(\prin f\cap\prin g=\prin{fp}\).  Hence \(E_W\) is a
pullback-stable fibrewise filter in \(\Var\).

The nucleus of \(J_W\) sends principal sieves to a family closed under
top, finite meets, and inverse image; call the resulting form \(Q_W\).
The map \(q_W\) preserves these finite initial lifts.  The push identity
\eqref{eq:push-saturation} gives the quotient unary presentations
\[
 j_{J_W}(\prin{fg})
   =\nabla\bigl(j_{J_W}(\prin g),f\bigr),
\]
so \(q_W\) is a morphism of distributivity forms.  By
\eqref{eq:principal-cover-test}, its kernel is \(E_W\).

To prove the cokernel property, let \(r:\Var\to H\) kill \(E_W\).
If \(f\in j_{J_W}(\prin g)\), then \(f^*\prin g\) contains a covering
principal sieve.  Thus there are arrows \(u\in W\) and \(v\) with
\(fu=gv\).  Since \(r(\prin u)=\top\), preservation of the unary
presentations along \(f\) gives
\[
 r(\prin f)=r(\prin{fu})=r(\prin{gv})\leqslant r(\prin g).
\]
If two principal sieves have the same saturation, this argument in both
directions makes their images under \(r\) equal.  Hence \(r\) factors
uniquely through \(q_W\), and the factor preserves the quotient
distributivity.  Thus \eqref{eq:variation-short-exact} is an
extensional presentation of \(\Var\).

Conversely, let an extensional presentation
\[
 E\longrightarrow\Var\xrightarrow{q}Q
\]
be given, and set \(W=\{f\mid\prin f\in E\}\).  The kernel
properties give (S1), (S2), and (S4).  If \(f,g\in W\) are composable,
the conclusion is automatic when \(q\) is null, since then \(E=\Var\)
and \(W\) contains every arrow.  Otherwise, apply \(q\) to the two
admissible presentations
\[
 \prin f=\nabla(\top,f),
 \qquad
 \prin{fg}=\nabla(\prin g,f).
\]
Their inputs have the same image, so
\(q(\prin{fg})=q(\prin f)=\top\), proving (S3).  Therefore \(W\)
defines the singleton-generated topology \(J_W\), and the cokernel
uniqueness argument identifies the original extensional presentation with
\eqref{eq:variation-short-exact}.  Finally, a singleton-generated
topology \(J\) is recovered from
\(W_J=\{f\mid\prin f\in J\}\), while
\eqref{eq:principal-cover-test} recovers \(W\) from \(J_W\).  The two
constructions are inverse.  Since the unary distributivity is strong,
Theorem~\ref{thm:ordinary-strong-presentations} identifies the ordinary
and strong versions.
\end{proof}

\subsection{The form of subobjects}
\label{sec:subobject-form}

Let \(\mathscr E\) be a locally small, well-powered
\emph{geometric category}: a regular category in which all small joins
of subobjects exist and are stable under pullback.  We work in suitable
universes; compare \cite[A1.4]{JohnstoneElephant}.  The clusters over
\(X\) are the subobjects of \(X\), and
the form map sends a monomorphism to its codomain.  Pullback supplies
all finite initial lifts.  If \(a_i:A_i\rightarrowtail X_i\) represents
a subobject and \(f_i:X_i\to X\), take as selected final lift
\begin{equation}\label{eq:subobject-final-lift}
 \nabla_i(A_i,f_i)=\bigvee_i\operatorname{im}(f_i a_i).
\end{equation}
Regular images satisfy Frobenius and are stable under pullback, while
unions of subobjects are stable under pullback.  These presentations
therefore define a strong distributivity, called the
\emph{geometric distributivity} of the subobject form
\(\operatorname{Sub}(\mathscr E)\).

Recall that a \emph{universal productive closure operator} is a family
\[
 c_X:\operatorname{Sub}(X)\longrightarrow\operatorname{Sub}(X)
\]
of extensive, monotone, idempotent, finite-meet-preserving maps which
commute with pullback.  We call such an operator a
\emph{Lawvere--Tierney topology in closure form}.  When \(\mathscr E\)
has a subobject classifier, this is equivalent to the usual
Lawvere--Tierney endomorphism of that classifier; in particular this
recovers the standard notion for every Grothendieck topos
\cite{MacLaneMoerdijk,JohnstoneElephant}.  For closure and density in
the broader categorical setting, see
\cite{DikranjanTholen,DuckertsAntoineGranJanelidze}.
Thus, outside the classifier setting, the established name for the
structure used here is \emph{universal productive closure operator};
the qualification ``in closure form'' records exactly this distinction.

\begin{theorem}\label{thm:subobject-topologies}
For a locally small, well-powered geometric category \(\mathscr E\),
Lawvere--Tierney topologies in closure form are naturally in bijection
with extensional presentations, equivalently strong extensional
presentations, of the geometric distributivity form
\(\operatorname{Sub}(\mathscr E)\).  For the closure operator \(c\), a
representative is
\begin{equation}\label{eq:subobject-short-exact}
 \operatorname{Dense}_c
 \longrightarrow\operatorname{Sub}(\mathscr E)
 \xrightarrow{\,q_c\,}\operatorname{Cl}_c(\mathscr E),
\end{equation}
where
\[
 \operatorname{Dense}_c(X)=\{A\mid c_X(A)=X\},
 \qquad q_c(A)=c_X(A),
\]
and \(\operatorname{Cl}_c(\mathscr E)_X\) is the poset of
\(c\)-closed subobjects.  The quotient identifies two subobjects
exactly when their closures agree.
\end{theorem}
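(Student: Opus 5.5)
The proof follows the pattern of Theorem~\ref{thm:main}, with the union formula \eqref{eq:subobject-final-lift} in place of \eqref{eq:sieve-final-lift}. There are two directions and a check that they are inverse.

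\emph{From a closure operator to a presentation.} Given \(c\), the fixed points \(\operatorname{Cl}_c\) contain top and are closed under finite meets and pullback, so they form a form with finite indexed meets. Since \(q_c\) is surjective with the inclusion as indexed right adjoint, this matches the setting of Theorem~\ref{thm:adjoint-closure}. The map \(q_c\) preserves top, finite meets and inverse images. To see that it preserves admissible joins, I would establish the analogue of the push identity \eqref{eq:push-saturation}, namely
\[
c\bigl(\textstyle\bigvee_i\operatorname{im}(f_ia_i)\bigr)=c\bigl(\textstyle\bigvee_i\operatorname{im}(f_i\,c(a_i))\bigr).
\]
The key input is continuity: \(\operatorname{im}(f\,c(A))\leqslant c(\operatorname{im}(fA))\). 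This holds because \(c(A)\leqslant c(f^*\operatorname{im}(fA))=f^*c(\operatorname{im}(fA))\), using commutation with pullback and \(A\leqslant f^*\operatorname{im} fA\).

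The quotient \(\operatorname{Cl}_c\) then receives the distributivity generated by the images, which are the sinks with apex \(c(\bigvee\operatorname{im})\). The kernel is \(\operatorname{Dense}_c\).

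For the cokernel property, let \(r\) kill the dense subobjects. It suffices to show \(r(A)=r(c(A))\). I would use the Frobenius restriction (D3) of the unit presentation \(c(A)=\nabla(c(A),1)\). More simply, \(A\) is dense in \(c(A)\) after pulling back along the mono \(m:c(A)\rightarrowtail X\): the subobject \(m^*A\) of \(c(A)\) is dense, since \(c(m^*A)=m^*c(A)=\top\). So \(r(m^*A)=\top\) in the fibre over \(c(A)\). Pushing along \(m\) via the admissible unary presentation \(\operatorname{im}(m\cdot -)\), preservation gives
\[
r(A)=\nabla\bigl(r(m^*A),m\bigr)=\nabla(\top,m)=r(c(A)).
\]
This replaces the sieve argument of Theorem~\ref{thm:main}. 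The factor through \(q_c\) is then unique by surjectivity, and it preserves the generated distributivity.

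\emph{From a presentation to a closure operator.} Given an extensional presentation \(K\to\operatorname{Sub}\to Q\), I take \(K\) as the top class, which is a pullback-stable filter. I then define
\[
c_X(A)=\bigvee\{\,B\leqslant X \mid A\wedge B\in K(B)\text{ after pullback to }B\,\}.
\]
This is the largest subobject in which \(A\) is dense, and it exists by well-poweredness and small joins. I must show that \(c\) is a universal productive closure operator and that \(\operatorname{Dense}_c=K\). Pullback stability of joins and of \(K\) gives commutation with pullback. Extensivity and meet preservation follow from \(K\) being a filter.

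\emph{Main obstacle.} The main obstacle is proving that \(\operatorname{Dense}_c=K\) and that \(c\) is idempotent. Both need a locality property: if \(B=\bigvee_i B_i\) and \(A\) is \(K\)-dense in each \(B_i\), then \(A\) is \(K\)-dense in \(B\). I would prove this as in the locality step of Theorem~\ref{thm:main}. Compare the two admissible presentations
\[
B=\nabla_i(\top_{B_i},b_i)\quad\text{and}\quad A\wedge B=\nabla_i(A\wedge B_i,b_i)
\]
inside the fibre over \(B\). Their sources have equal images under \(q\), so \(q(A\wedge B)=\top\). Transitivity (density of \(A\) in \(C\) and of \(C\) in \(B\)) is handled by the same kind of argument, via the composite presentation and (D2) together with the unary push along monos.

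\emph{Inverse correspondence and strong version.} With locality in hand, \(q\) identifies \(A\) with \(c(A)\). Hence the given quotient and \(q_c\) are cokernels of the same kernel, and they are isomorphic under \(\operatorname{Sub}\). The two constructions are therefore mutually inverse. Finally, the strong version follows from Theorem~\ref{thm:ordinary-strong-presentations}, since the geometric distributivity is strong.
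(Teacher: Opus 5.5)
Your first direction is the paper's argument. The continuity inequality gives preservation of the closed unions, and the cokernel property comes from comparing \(A=\nabla(m^*A,m)\) with \(c_X(A)=\nabla(\top,m)\) along \(m:c_X(A)\rightarrowtail X\). For the converse you take a different route. The paper notes that \(q_X\) preserves all vertical unions, so it has a right adjoint \(s_X\), and \(c_X=s_Xq_X\) is then at once extensive, idempotent and meet-preserving. You instead define \(c_X(A)\) directly as the join of all \(B\) in which \(A\) is \(K\)-dense. Since \(q\) preserves \(B=\nabla(\top_B,b)\) and \(q_B(b^*A)=b^*q_X(A)\), density of \(A\) in \(B\) is equivalent to \(q_X(B)\leqslant q_X(A)\). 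So your operator is exactly \(s_Xq_X\). The route is sound in principle, but it rebuilds by locality and transitivity what the adjunction gives for free.

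The gap is your claim that pullback stability of joins and of \(K\) gives commutation with pullback. That only yields \(f^*c_X(A)\leqslant c_Y(f^*A)\). The reverse inequality needs density to descend along images: if \(f^*A\) is dense in \(b':B'\rightarrowtail Y\), then \(A\) is dense in \(I=\exists_fB'\). Neither pullback stability nor your vertical locality gives this. In \(\mathbb Z_2\)-sets, let \(X^{\mathrm{fix}}\) be the set of fixed points and put \(K(X)=\{A\mid X^{\mathrm{fix}}\subseteq A\}\). This is a pullback-stable filter satisfying your locality and transitivity, though of course it is not normal for the geometric distributivity. Your formula gives \(c_X(A)=A\cup(X\setminus X^{\mathrm{fix}})\). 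For \(e:\mathbb Z_2\to 1\) one then gets \(c_{\mathbb Z_2}(e^*\varnothing)=\mathbb Z_2\neq\varnothing=e^*c_1(\varnothing)\).

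The missing input is that \(q\) preserves unary presentations along non-monic arrows. By Frobenius, \(I=\nabla(\top_{B'},fb')\) and \(A\wedge I=\nabla(b'^*f^*A,fb')\) are admissible presentations whose sources have equal \(q\)-images. Hence \(q_X(A\wedge I)=q_X(I)\), and pulling back along \(I\rightarrowtail X\) shows that \(A\) is dense in \(I\). This is exactly the step the paper isolates and proves with \(\exists_f\). It is the real obstacle in your second direction, not locality. Without it you do not yet have a universal productive closure operator, so you cannot invoke your first direction to identify \(q\) with \(q_c\).
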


\begin{proof}
Let \(c\) be a universal productive closure operator.  Its fixed
points are closed under finite meets and pullback.  Their structured
final lift is obtained by closing the right-hand side of
\eqref{eq:subobject-final-lift}.  We claim that \(q_c\) preserves these
presentations.  Put
\[
                  U=\bigvee_i\operatorname{im}(f_i a_i).
\]
For each \(i\), let \(C=c_X(U)\).  Since
\(A_i\leqslant f_i^*C\), universality and productivity give
\[
 c_{X_i}(A_i)\leqslant c_{X_i}(f_i^*C)
   =f_i^*c_X(C)=f_i^*C.
\]
By the image--pullback adjunction,
\(\operatorname{im}(f_i c_{X_i}(A_i))\leqslant C\).  Taking the union
and then closing proves
\[
 c_X\!\left(\bigvee_i\operatorname{im}(f_i c_{X_i}(A_i))\right)
 \leqslant c_X(U),
\]
and the reverse inequality follows from extensivity.  This is exactly
preservation of the geometric final lift.  Thus \(q_c\) is a
distributivity morphism, and its kernel is
\(\operatorname{Dense}_c\).

It is also the cokernel of that kernel.  Regard
\(A\rightarrowtail c_X(A)\) as a subobject of \(c_X(A)\); it is dense
there, because pullback stability gives
\[
 c_{c_X(A)}(A)=c_X(A).
\]
If a distributivity morphism \(r\) sends all dense subobjects to top,
apply it to the two unary geometric presentations along the inclusion
\(m:c_X(A)\rightarrowtail X\):
\[
 A=\nabla(A\rightarrowtail c_X(A),m),
 \qquad
 c_X(A)=\nabla(\top_{c_X(A)},m).
\]
Their source clusters have the same image, so
\(r(A)=r(c_X(A))\).  Hence \(r\) factors uniquely through \(q_c\), and
the factor preserves the quotient distributivity.  This proves that
\eqref{eq:subobject-short-exact} is an extensional presentation.

Conversely, let the extensional presentation
\[
 D\longrightarrow\operatorname{Sub}(\mathscr E)
 \xrightarrow{q}Q
\]
be given.  Since the geometric distributivity contains arbitrary
vertical unions, every \(q_X\) preserves all joins and therefore has a
right adjoint \(s_X\).  Surjectivity gives \(q_Xs_X=1\), so
\(c_X=s_Xq_X\) is an extensive, idempotent, productive closure
operator.  It remains to verify pullback stability; this also shows that
the fibrewise right adjoints are indexed.

Let \(f:X\to Y\), let \(A\) be a subobject of \(Y\), and put
\(B=c_X(f^*A)\).  Naturality of \(q\) and the adjunction give
\[
                         f^*c_Y(A)\leqslant c_X(f^*A)=B.
\]
For the reverse inequality, preservation of the unary final lift
\(\exists_f\) gives
\[
 q_Y(\exists_fB)=\exists_fq_X(B)
   =\exists_f f^*q_Y(A)\leqslant q_Y(A).
\]
The adjunction \(q_Y\dashv s_Y\), followed by
\(\exists_f\dashv f^*\), yields
\(B\leqslant f^*c_Y(A)\).  Hence
\(c_X(f^*A)=f^*c_Y(A)\).  Finally,
\[
 D_X=\{A\mid q_X(A)=\top\}
    =\{A\mid c_X(A)=X\}.
\]
We have recovered a universal productive closure operator and hence a
Lawvere--Tierney topology in closure form.  The constructions are
inverse by Theorem~\ref{thm:adjoint-closure} and uniqueness of
cokernels.  Strongness follows from Theorem
\ref{thm:ordinary-strong-presentations}.
\end{proof}

Thus the same construction has two classical realizations.  On the form
of sieves it gives Grothendieck topologies on the base; on a geometric
subobject form it gives Lawvere--Tierney topologies in closure form,
expressed through their dense subobjects.

\subsection{Extensional presentations over the terminal base}

Let \((L,\mathcal P)\) be a distributivity form over \(\mathbf1\), so
that \(L\) is a meet-semilattice with top and \(\mathcal P\) is a
chosen substitution-closed class of exact joins.  A pullback-stable
fibrewise filter is now simply a filter \(D\subseteq L\), where the
improper filter is allowed.  Let \(\theta_D\) be the least
distributivity congruence forcing every member of \(D\) to be top.

\begin{theorem}\label{thm:terminal-filter-topologies}
For a filter \(D\subseteq L\), the following conditions are equivalent:
\begin{enumerate}[label=\textup{(\roman*)}]
\item \(D\) is the kernel term of an extensional presentation of
\((L,\mathcal P)\);
\item \(D\) is the top class of a distributivity congruence on \(L\);
\item the top class of \(\theta_D\) is exactly \(D\).
\end{enumerate}
When they hold, the canonical extensional presentation is
\begin{equation}\label{eq:terminal-filter-sequence}
 D\longrightarrow L\longrightarrow L/\theta_D.
\end{equation}
\end{theorem}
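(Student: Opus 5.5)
The plan is to specialize the general machinery of Section~\ref{sec:extensional-presentations} to the terminal base. There every distributivity is strong, and a pullback-stable fibrewise filter is just a filter. The cycle of implications is (i)\(\Rightarrow\)(ii)\(\Rightarrow\)(iii)\(\Rightarrow\)(i).

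For (i)\(\Rightarrow\)(ii), suppose \(D\to L\xrightarrow{q}Q\) is \(\Null\)-short exact. By Theorem~\ref{thm:kernels} we may identify \(D\) with the top class \(\{a\mid q(a)=\top\}\). Equality under \(q\) is a distributivity congruence: it is a meet congruence because \(q\) preserves finite meets. It is compatible with admissible presentations sharing an index set because \(q\) preserves each admissible join, so equal images of the sources force equal images of the apex. Its top class is \(D\).

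For (ii)\(\Rightarrow\)(iii), let \(\theta\) be a distributivity congruence with top class \(D\). Then \(\theta\) forces every member of \(D\) to top, so minimality gives \(\theta_D\subseteq\theta\). Hence the top class of \(\theta_D\) lies inside that of \(\theta\), which is \(D\). The reverse inclusion holds because \(\theta_D\) is generated by identifying members of \(D\) with top.

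For (iii)\(\Rightarrow\)(i), the aim is to invoke Lemma~\ref{lem:kernel-criterion}. Condition (iii) says exactly that \(\operatorname{ncl}_{\mathcal P}(D)=D\). The lemma then gives that \(D\hookrightarrow L\), with its induced distributivity, is a relative kernel, and that its canonical cokernel is \(L\to L/\theta_D\). To see that this is genuinely a kernel of that quotient, I would use Theorem~\ref{thm:cokernels}. That theorem shows \(q_D\colon L\to L/\theta_D\) is the \(\Null\)-cokernel of \(D\hookrightarrow L\), with the generated distributivity on the quotient. Its kernel, computed by Theorem~\ref{thm:kernels}, is the top class of \(\theta_D\), which by (iii) is \(D\). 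So \eqref{eq:terminal-filter-sequence} is short exact, giving (i) and the final assertion at once.

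The step needing most care is the bookkeeping in (iii)\(\Rightarrow\)(i): the kernel of \(q_D\) must carry exactly the induced distributivity from \(L\), so that the sequence is short exact on the nose and not merely up to fibres. Theorem~\ref{thm:kernels} defines kernels with the induced distributivity, so this is automatic once the fibres agree.
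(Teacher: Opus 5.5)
Your proof is correct and follows essentially the same route as the paper, which also reduces the statement to Lemma~\ref{lem:kernel-criterion} in a single fibre. There, equality under a morphism gives (i)\(\Rightarrow\)(ii), minimality of \(\theta_D\) links (ii) with (iii), and Theorem~\ref{thm:cokernels} supplies the canonical sequence \eqref{eq:terminal-filter-sequence}; your only deviation is closing the cycle through (iii) rather than arguing (ii)\(\Rightarrow\)(i) directly from the quotient by an arbitrary distributivity congruence.
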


\begin{proof}
This is Lemma~\ref{lem:kernel-criterion} in one fibre.  Explicitly, the
kernel of a morphism is its top class, and equality under that morphism
is a distributivity congruence.  Conversely, the top class of a
distributivity congruence is the kernel of the corresponding quotient.
Minimality of \(\theta_D\) identifies the normal-closure condition with
(iii), and Theorem~\ref{thm:cokernels} supplies
\eqref{eq:terminal-filter-sequence}.
\end{proof}

The theorem gives quick computations in familiar algebraic cases.

\begin{theorem}\label{thm:terminal-computations}
The following are extensional presentations of distributivity forms over
\(\mathbf1\); they are automatically strong.
\begin{enumerate}[label=\textup{(\roman*)}]
\item With only unit joins admissible, every filter \(D\) of a
meet-semilattice gives an extensional presentation; the quotient
relation is
\begin{equation}\label{eq:filter-localization}
 x\equiv_Dy
 \quad\Longleftrightarrow\quad
 x\wedge d=y\wedge d\text{ for some }d\in D.
\end{equation}
\item On a bounded distributive lattice with all finite joins
admissible, every lattice filter gives an extensional presentation, and
\eqref{eq:filter-localization} is the usual bounded-distributive-lattice
quotient forcing \(D\) to top.
\item If \(B\) is a \(\kappa\)-complete Boolean algebra, for
\(\kappa\) infinite and regular, with all joins of cardinality less
than \(\kappa\) admissible, its extensional presentations are precisely
the less-than-\(\kappa\)-complete filters.
\item On the powerset frame \(\mathcal P(X)\), with all unions
admissible, the extensional presentations are precisely the principal
filters.  For
\(A\subseteq X\), the corresponding sequence is
\begin{equation}\label{eq:powerset-sequence}
 \{U\subseteq X\mid A\subseteq U\}
 \longrightarrow\mathcal P(X)
 \xrightarrow{\,U\mapsto U\cap A\,}\mathcal P(A)
 .
\end{equation}
\end{enumerate}
\end{theorem}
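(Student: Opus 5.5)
The plan is to reduce every item to Theorem~\ref{thm:terminal-filter-topologies}. Over \(\mathbf1\) every distributivity is strong, so Theorem~\ref{thm:ordinary-strong-presentations} already gives the word ``automatically''. For each item I would compute the least distributivity congruence \(\theta_D\) killing a filter \(D\), and then check whether its top class is exactly \(D\).

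The key computation is a candidate formula. I take the relation \(\equiv_D\) of \eqref{eq:filter-localization} and show three things.

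\begin{enumerate}[label=\textup{(\alph*)}]
\item It is a meet congruence, with transitivity obtained by meeting two witnesses.
\item It is contained in every congruence killing \(D\): if \(x\wedge d=y\wedge d\) and \(d\mathrel\theta\top\), then \(x\mathrel\theta x\wedge d=y\wedge d\mathrel\theta y\).
\item Its top class is exactly \(D\): \(x\wedge d=d\) means \(d\leqslant x\), and \(D\) is upward closed.
\end{enumerate}

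It then only remains to check, in each item, that \(\equiv_D\) is compatible with the admissible joins. In case (i) only units are admissible, so compatibility is vacuous; hence \(\theta_D={\equiv_D}\) and (iii) of Theorem~\ref{thm:terminal-filter-topologies} holds. In case (ii), for finite joins, I would take a common witness \(d=\bigwedge_i d_i\) for \(a_i\equiv_D b_i\) and apply distributivity to get \((\bigvee a_i)\wedge d=\bigvee(a_i\wedge d)=(\bigvee b_i)\wedge d\). This is the usual lattice quotient by a filter.

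Case (iii) runs the same way for joins of cardinality \(<\kappa\), provided the filter is \(<\kappa\)-complete. For such a filter the common witness exists, so \(D\) is a kernel. For the converse I would show that a kernel \(D\) must be \(<\kappa\)-complete. Given \(d_i\in D\) for \(i<\lambda<\kappa\), set \(e_i=\neg d_i\). Each \(e_i\equiv 0\), since \(e_i\wedge d_i=0\). The admissible join \(\bigvee e_i\) is then congruent to the join of zeros, so \(\neg\bigwedge d_i=\bigvee e_i\mathrel\theta 0\), and hence \(\bigwedge d_i\mathrel\theta\top\). Thus \(\bigwedge d_i\in D\) by (iii).

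Case (iv) uses the same argument with arbitrary unions, so kernels are the complete filters of \(\mathcal P(X)\); a complete filter is principal, generated by \(A=\bigcap D\). Conversely, a principal filter \({\uparrow}A\) is complete, and \(U\equiv V\) iff \(U\cap A=V\cap A\). This identifies the quotient with \(U\mapsto U\cap A\) onto \(\mathcal P(A)\), which preserves unions and therefore gives \eqref{eq:powerset-sequence}.

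The main obstacle is the necessity direction in (iii) and (iv). There the minimal congruence \(\theta_D\) is not given explicitly, so I must argue only from the closure properties of a distributivity congruence. The complement trick above uses only admissible joins of elements already identified with \(0\), together with compatibility. I would also check that the quotient distributivity generated by the images is the full \(<\kappa\) (respectively arbitrary) join class on the quotient algebra. Strictly this is not needed, because exactness is decided through criterion (iii) of Theorem~\ref{thm:terminal-filter-topologies}.
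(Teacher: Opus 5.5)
Your proposal is correct and follows essentially the paper's route: you reduce to Theorem~\ref{thm:terminal-filter-topologies} via the common-witness congruence, using a meet of finitely many (or fewer than $\kappa$) witnesses plus distributivity for sufficiency. For necessity you use complementation, where your $e_i=\neg d_i$ trick is the explicit form of the paper's ``filters correspond to ideals,'' and completeness then forces principality in $\mathcal P(X)$. The one step you leave implicit is passing from $\neg\bigwedge_i d_i\mathrel\theta 0$ to $\bigwedge_i d_i\mathrel\theta\top$; this needs compatibility with the admissible binary presentations $\top=x\vee\neg x$ and $x=x\vee 0$, which is available because finite joins are admissible.
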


\begin{proof}
For the unit distributivity, no additional compatibility is imposed on
the meet congruence \eqref{eq:filter-localization}, whose top class is
exactly \(D\).  This proves (i).  In a distributive lattice, finitely
many witnesses in \(D\) may be replaced by their meet, and finite
distributivity makes \eqref{eq:filter-localization} compatible with
finite joins, proving (ii).

For (iii), complementation identifies filters with ideals.  A quotient
preserves the selected joins exactly when the corresponding ideal is
closed under joins of cardinality less than \(\kappa\), equivalently
when the filter is closed under the corresponding meets.  Regularity of
\(\kappa\) supplies the substitution closure.  Finally, an extensional
presentation of \(\mathcal P(X)\) has a complete filter by (iii) with no cardinal bound,
and every complete filter of a powerset is principal, generated by the
intersection of all its members.  Meeting with that generator is the
cokernel in \eqref{eq:powerset-sequence}.
\end{proof}

For a frame with all joins admissible there is a sharper localic
description.  A meet \(\bigwedge_i d_i\) in a frame is called
\emph{strongly exact} when every frame homomorphism preserves it, and a
filter is strongly exact when it is closed under all strongly exact
meets of its members.  For \(d\in L\), let
\(o_d(x)=d\Rightarrow x\) be the open nucleus, and for a filter \(D\)
put
\begin{equation}\label{eq:fitted-nucleus}
                         j_D=\bigvee_{d\in D}o_d
\end{equation}
in the frame of nuclei.

\begin{theorem}\label{thm:frame-topologies}
For a frame \(L\) with all join presentations admissible, extensional
presentations of \(L\) correspond naturally to strongly exact filters of
\(L\), and
hence to fitted sublocales of the locale represented by \(L\).  The
filter \(D\) gives the representative
\[
 D\longrightarrow L
 \xrightarrow{\,j_D\,}\operatorname{Fix}(j_D)
 .
\]
\end{theorem}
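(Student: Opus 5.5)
By Theorem~\ref{thm:terminal-filter-topologies}, the task is to identify which filters \(D\subseteq L\) are top classes of distributivity congruences. I would then identify the resulting canonical quotient with \(\operatorname{Fix}(j_D)\). When every join presentation is admissible, a distributivity congruence is exactly a frame congruence. So the morphisms out of \(L\) that preserve the admissible joins, followed into their codomain, are frame homomorphisms onto their images. A filter is therefore a kernel term precisely when \(D=h^{-1}(1)\) for some frame homomorphism \(h\). This reduces the theorem to two frame-theoretic claims:
\begin{enumerate}[label=\textup{(\alph*)}]
\item the filters of the form \(h^{-1}(1)\) are exactly the strongly exact filters;
\item for such \(D\), the cokernel of \(D\hookrightarrow L\) is \(x\mapsto j_D(x)\).
\end{enumerate}

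\textbf{Cokernel property.} Let \(h:L\to M\) be a frame homomorphism with \(h(D)=\{1\}\), and fix \(d\in D\). From \((d\Rightarrow x)\wedge d\leqslant x\) we get \(h(d\Rightarrow x)\leqslant h(x)\), because \(h(d)=1\). With monotonicity this gives \(h\circ o_d=h\).

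The class of nuclei \(k\) with \(hk=h\) is closed under composition and under pointwise directed joins. The join of nuclei is obtained by transfinitely iterating pointwise joins of composites, so this class is closed under arbitrary joins. Hence \(h\circ j_D=h\), and \(h\) factors uniquely through the frame surjection \(L\to\operatorname{Fix}(j_D)\); the factor preserves joins because it is the restriction of \(h\). Since \(L\to\operatorname{Fix}(j_D)\) is itself a frame homomorphism, the normal closure of \(D\) is
\[
\operatorname{ncl}_{\mathcal P}(D)=\{x\mid j_D(x)=1\}.
\]
This step is routine.

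\textbf{Kernel characterization (the main obstacle).} One inclusion is immediate. If \(D=h^{-1}(1)\) and \(\bigwedge_i d_i\) is a strongly exact meet of members of \(D\), then \(h\) preserves it, so \(h(\bigwedge_i d_i)=1\) and \(D\) is strongly exact.

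The converse is where I expect the real work. One must show that \(j_D(x)=1\) forces \(x\in D\) whenever \(D\) is strongly exact. The approach I would try first is that of Moshier, Pultr and Suarez. Express \(j_D(x)=1\) through the transfinite construction of \(\bigvee_d o_d\). Then show by induction that the set \(\{x\mid j_D(x)=1\}\) is the closure of \(D\) under meets that every frame homomorphism preserves. Strong exactness of \(D\) then absorbs each stage. If that induction becomes unwieldy, I would instead cite their theorem that strongly exact filters are precisely the filters \(\{x\mid j(x)=1\}\) for fitted nuclei \(j\).

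\textbf{Conclusion.} Given (a) and (b), Lemma~\ref{lem:kernel-criterion} gives the bijection between extensional presentations and strongly exact filters, with the displayed representative; Theorem~\ref{thm:adjoint-closure}, applied to the inclusion \(\operatorname{Fix}(j_D)\hookrightarrow L\), recovers \(j_D\) from any representative. Finally, \(\operatorname{Fix}(j_D)=\bigcap_{d\in D}\operatorname{Fix}(o_d)\) is an intersection of open sublocales, that is, a fitted sublocale. Every fitted sublocale arises this way from the filter of open sublocales containing it, which is strongly exact by (a). This gives the correspondence with fitted sublocales.
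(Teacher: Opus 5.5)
Your proposal is correct and follows essentially the paper's route. You reduce via Theorem~\ref{thm:terminal-filter-topologies} to frame congruences, identify \(\theta_D\) with the congruence of \(j_D=\bigvee_{d\in D}o_d\) (the paper phrases this as the join of the open congruences \(\Delta_d\)), and rely on Moshier--Pultr--Suarez for the hard direction: strongly exact filters are exactly the top classes of such congruences. You also check \(h\circ j_D=h\) and the easy half of that correspondence by hand, where the paper simply cites; in that closure argument, speak of inflationary monotone maps (prenuclei) rather than nuclei, since composites of nuclei need not be nuclei.
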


\begin{proof}
For the maximal frame distributivity, distributivity congruences are
exactly frame congruences.  The congruence generated by \(d\equiv1\)
is the open congruence
\[
             \Delta_d=\{(x,y)\mid x\wedge d=y\wedge d\}.
\]
Thus \(\theta_D\) is the join of the open congruences \(\Delta_d\), and
on the localic side it represents the intersection of the corresponding
open sublocales, hence a fitted sublocale.  Moshier, Pultr, and Suarez
prove that fitted sublocales correspond to strongly exact filters
\cite{MoshierPultrSuarez}; see also \cite{JaklSuarez}.  Under this
correspondence, \(D\) is strongly exact exactly when it is the top class
of the least frame congruence forcing its members to top.  Theorem
\ref{thm:terminal-filter-topologies} now gives the result, and
\eqref{eq:fitted-nucleus} is the nucleus representing the same quotient
\cite{EscardoNuclei}.
\end{proof}

The restriction to fitted sublocales is essential.  An arbitrary
nucleus need not be determined by its top class.  For example, on the
three-element frame \(0<a<1\), the identity nucleus and the closed
nucleus \(x\mapsto a\vee x\) both have top class \(\{1\}\).  The
cokernel generated by that class is the identity, not the closed
nucleus.  This is precisely the distinction expressed by exactness for
the distributivity after Theorem~\ref{thm:adjoint-closure}.

The preceding applications are geometric and order-theoretic.  We now
turn to two algebraic forms whose clusters are concrete substructures.
With the minimal distributivity their extensional presentations are
determined by pullback-stable filters of substructures.  These filters
act as neighbourhood filters, while pullback stability expresses
continuity of every homomorphism.

\subsection{The form of subgroups of groups}

Let \(\mathbf{Grp}\) be the category of groups and homomorphisms.  The
\emph{form of subgroups}
\[
                       \Sub\longrightarrow\mathbf{Grp}
\]
has the subgroups \(A\leqslant G\) as its clusters over \(G\), and an
arrow \(A\to B\) above \(f:G\to H\) when \(f(A)\leqslant B\).
Consequently,
\[
 \top_G=G,\qquad A\wedge B=A\cap B,\qquad
 f^*B=f^{-1}(B).
\]
Every structured sink has a final lift,
\[
       \nabla_i(A_i,f_i)=
       \left\langle\bigcup_i f_i(A_i)\right\rangle,
\]
but these final lifts need not distribute over intersections.  We equip
\(\Sub\) with the \emph{minimal distributivity}, in which only unit
presentations are admissible.

A topological group is \emph{non-Archimedean} when it has a
neighbourhood basis at the identity consisting of open subgroups; see
Megrelishvili and Shlossberg
\cite{MegrelishviliShlossberg}.  We use their basis condition but,
unlike the standing convention in that reference, do not include
Hausdorffness in the term.  Call an assignment
\(G\mapsto\tau_G\) of such a topology
\emph{functorial} when every group homomorphism is continuous.  This is
the standard use of functorial topology; for the systematic theory in
the abelian case, including the profinite topology, see Dikranjan and
Giordano Bruno \cite{DikranjanGiordanoBruno}.

\begin{theorem}\label{thm:subgroup-nonarchimedean}
Extensional presentations of the minimally distributive subgroup form
are naturally in bijection with functorial non-Archimedean group
topologies.  They are equivalently its strong extensional
presentations.  More
explicitly, both are determined by assignments
\(G\mapsto\mathcal N(G)\) such that:
\begin{enumerate}[label=\textup{(\roman*)}]
\item \(\mathcal N(G)\) is a filter in the lattice of subgroups of
\(G\), where the improper filter is allowed;
\item for every \(f:G\to H\) and \(U\in\mathcal N(H)\), one has
\begin{equation}\label{eq:subgroup-pullback-filter}
                         f^{-1}(U)\in\mathcal N(G).
\end{equation}
\end{enumerate}
Under the equivalence, \(\mathcal N(G)\) is the filter of open
subgroups.  Moreover, \(G\) is Hausdorff in the sense of Definition
\ref{def:hausdorff-object} if and only if the corresponding
topological group is Hausdorff.
\end{theorem}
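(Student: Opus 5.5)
The proof has three steps: reduce extensional presentations to filter assignments, identify those assignments with functorial non-Archimedean topologies, and compare the two Hausdorff notions.

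\emph{Step 1: presentations are filter assignments.} Since only unit presentations are admissible in the minimal distributivity, a distributivity congruence is just an indexed meet congruence; Definition~\ref{def:distributivity-congruence} imposes nothing further. Take a pullback-stable fibrewise filter \(K\) and let \(\theta_K\) be the fibrewise common-witness relation
\[
A\equiv B \iff A\cap U=B\cap U \text{ for some } U\in K_G .
\]
This is a meet congruence in each fibre. It is stable under inverse image, because \(f^{-1}\) preserves intersections and \(K\) is pullback stable. Its top class is exactly \(K_G\): if \(A\cap U=U\), then \(U\leqslant A\), so \(A\in K_G\) by upward closure. By minimality, the least congruence killing \(K\) has top class \(K\). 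Lemma~\ref{lem:kernel-criterion} then shows that every pullback-stable fibrewise filter is the kernel term of an extensional presentation, with canonical representative \(K\to\Sub\to\Sub/\theta_K\). Conversely, kernel terms are always pullback-stable fibrewise filters (Theorem~\ref{thm:kernels}). So extensional presentations are exactly the assignments \(\mathcal N\) satisfying (i) and (ii). The unit distributivity is strong, so Theorem~\ref{thm:ordinary-strong-presentations} identifies these with the strong extensional presentations.

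\emph{Step 2: filter assignments are functorial non-Archimedean topologies.} Given a functorial non-Archimedean topology \(\tau\), let \(\mathcal N(G)\) be its open subgroups. This is a filter: a subgroup containing an open subgroup is open, being a union of cosets of it, and finite intersections of open subgroups are open. Continuity of every homomorphism gives \eqref{eq:subgroup-pullback-filter}. For the converse, I will check that each filter \(\mathcal N(G)\) of subgroups is a neighbourhood basis at \(e\) for a group topology. The main obstacle is invariance under conjugation, but pullback stability settles it: inner automorphisms are homomorphisms, so \(gUg^{-1}\in\mathcal N(G)\) for every \(U\in\mathcal N(G)\). The other basis axioms are immediate for subgroups, since \(UU^{-1}=U\). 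The open sets are then the unions of cosets \(gU\) with \(U\in\mathcal N(G)\). Pullback stability makes each \(f:G\to H\) continuous, because \(f^{-1}(hU)\) is either empty or a coset of \(f^{-1}(U)\).

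\emph{The two constructions are inverse.} Starting from a filter, the open subgroups of the associated topology are exactly the members of \(\mathcal N(G)\). An open subgroup \(V\) contains a basic \(U\in\mathcal N(G)\), so \(V\in\mathcal N(G)\) by upward closure. Starting from a topology, its open subgroups form a neighbourhood basis by the non-Archimedean hypothesis, so the topology is recovered. Naturality is clear.

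\emph{Step 3: Hausdorffness.} The subgroup fibre has least element \(\bot_G=\{e\}\). By Definition~\ref{def:hausdorff-object}, \(G\) is Hausdorff when any subgroup contained in every member of \(\mathcal N(G)\) is trivial. Since the intersection \(\bigcap\mathcal N(G)\) is itself a subgroup lying below every member, this condition says exactly that \(\bigcap\mathcal N(G)=\{e\}\). For a topological group, Hausdorff is equivalent to \(\{e\}\) being closed. The closure of \(\{e\}\) is the intersection of all identity neighbourhoods, which here is the intersection of the open subgroups, i.e.\ \(\bigcap\mathcal N(G)\). The two notions therefore agree.
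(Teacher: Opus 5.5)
Your proposal is correct and follows essentially the paper's proof: the common-witness congruence plus Lemma~\ref{lem:kernel-criterion}, inner automorphisms for conjugation invariance, and the separation radical identified with the closure of the identity. The only minor variation is that you get the top class of the least congruence by noting that it is contained in the common-witness congruence, whereas the paper shows the common-witness congruence is itself the least. Either suffices for the kernel criterion.
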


\begin{proof}
If an extensional presentation of \(\Sub\) has kernel \(K\), then
\(K_G\) is a subgroup filter and the subform condition gives
\eqref{eq:subgroup-pullback-filter}.  Conversely, let
\(\mathcal N\) have properties (i) and (ii).  In the fibre over \(G\)
define
\begin{equation}\label{eq:subgroup-local-relation}
 A\equiv_{\mathcal N,G}B
 \quad\Longleftrightarrow\quad
 A\cap U=B\cap U\text{ for some }U\in\mathcal N(G).
\end{equation}
Intersections of witnesses prove transitivity, and intersection with a
further subgroup proves compatibility with meets.  Taking inverse
images and using (ii) proves that these congruences are indexed.  Since
the distributivity is minimal, they are automatically distributivity
congruences.

Their top classes are exactly the filters \(\mathcal N(G)\).  Indeed,
\(A\equiv_{\mathcal N,G}G\) precisely when \(A\) contains some member
of \(\mathcal N(G)\).  They are also the least indexed meet
congruences with these top classes: if a meet congruence sends \(U\) to
top and \(A\cap U=B\cap U\), then
\[
 [A]=[A]\wedge[U]=[A\cap U]=[B\cap U]
      =[B]\wedge[U]=[B].
\]
Lemma~\ref{lem:kernel-criterion} therefore makes \(\mathcal N\) the
kernel of an extensional presentation of \(\Sub\), and every
extensional presentation arises in this way.

Given \(\mathcal N\), declare \(O\subseteq G\) open when each
\(x\in O\) has \(xU\subseteq O\) for some
\(U\in\mathcal N(G)\).  The filter axioms give a neighbourhood basis.
Condition (ii), applied to inner automorphisms, gives conjugation
invariance of the filter, and hence the group operations are
continuous.  Its open subgroups are exactly the members of
\(\mathcal N(G)\).  Condition (ii) is precisely continuity of every
homomorphism.  Conversely, the open subgroups of a functorial
non-Archimedean topology satisfy (i) and (ii), so the constructions are
inverse.

Finally,
\[
             \rho_{\mathbb E}(G)
                =\bigcap_{U\in\mathcal N(G)}U
\]
is the closure of the identity.  It is trivial exactly when the group
topology is Hausdorff.
The minimal distributivity is strong, so Theorem
\ref{thm:ordinary-strong-presentations} gives the strong assertion.
\end{proof}

Theorem~\ref{thm:subgroup-nonarchimedean} is the main point of the
example: the established notion of a functorial non-Archimedean
topology is precisely an extensional presentation of the subgroup form.

A large family of classical examples is obtained from finite
quotients.  Let \(\mathcal K\) be a nonempty pseudovariety of finite
groups, that is, a class closed under finite products, subgroups, and
quotients, and put
\[
 \mathcal N_{\mathcal K}(G)=
 \{U\leqslant G\mid
   N\leqslant U\text{ for some }N\mathrel{\trianglelefteq}G
   \text{ with }G/N\in\mathcal K\}.
\]

\begin{theorem}\label{thm:pro-K-topology}
The filter \(\mathcal N_{\mathcal K}\) is the kernel of an extensional
presentation of the subgroup form.  Its associated non-Archimedean group topology is
the pro-\(\mathcal K\) topology, and its Hausdorff objects are exactly
the residually \(\mathcal K\) groups.  In particular, the choices of
all finite groups, all finite \(p\)-groups, all finite solvable groups,
and all finite nilpotent groups give respectively the profinite,
pro-\(p\), prosolvable, and pronilpotent topologies with their familiar
residual classes.
\end{theorem}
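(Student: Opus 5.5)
The plan is to reduce everything to Theorem~\ref{thm:subgroup-nonarchimedean}: it suffices to verify conditions (i) and (ii) there for \(\mathcal N_{\mathcal K}\), then identify the topology and its radical.

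For (i), \(\mathcal N_{\mathcal K}(G)\) is upward closed by definition. It contains \(G\), because \(\mathcal K\) is nonempty and closed under quotients, so it contains the trivial group and \(N=G\) is admissible. For binary meets, suppose \(N_1\leqslant U_1\) and \(N_2\leqslant U_2\) with \(G/N_i\in\mathcal K\). Then \(G/(N_1\cap N_2)\) embeds in \(G/N_1\times G/N_2\), so it lies in \(\mathcal K\) by closure under finite products and subgroups. Since \(N_1\cap N_2\leqslant U_1\cap U_2\), the meet lies in the filter.

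For (ii), let \(f:G\to H\) and \(N\trianglelefteq H\) with \(H/N\in\mathcal K\) and \(N\leqslant U\). Then \(f^{-1}(N)\) is normal in \(G\) and contained in \(f^{-1}(U)\). Moreover \(G/f^{-1}(N)\) embeds in \(H/N\), hence lies in \(\mathcal K\). Theorem~\ref{thm:subgroup-nonarchimedean} then makes \(\mathcal N_{\mathcal K}\) the kernel of an extensional presentation.

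By the same theorem, the associated topology has the members of \(\mathcal N_{\mathcal K}(G)\) as its open subgroups and their cosets as a basis. The normal subgroups \(N\) with \(G/N\in\mathcal K\) already form a neighbourhood basis of the identity, since every member of \(\mathcal N_{\mathcal K}(G)\) contains one. This is the standard definition of the pro-\(\mathcal K\) topology.

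For the Hausdorff statement, the same theorem gives \(\rho_{\mathbb E}(G)=\bigcap_{U\in\mathcal N_{\mathcal K}(G)}U\), which equals the intersection of those normal \(N\) with \(G/N\in\mathcal K\). So \(G\) is Hausdorff if and only if this intersection is trivial, which is the definition of being residually \(\mathcal K\).

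The particular cases only require that finite groups, finite \(p\)-groups, finite solvable groups and finite nilpotent groups are each closed under subgroups, quotients and finite products. These are standard facts; nilpotency under finite direct products is the only one not immediate.

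The main obstacle, mild here, is the meet-closure step. It relies on the subdirect embedding \(G/(N_1\cap N_2)\hookrightarrow G/N_1\times G/N_2\), and this is exactly where closure of \(\mathcal K\) under products and subgroups is used.
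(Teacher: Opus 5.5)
Your proposal is correct and follows essentially the paper's proof. Both verify the filter and pullback-stability conditions of Theorem~\ref{thm:subgroup-nonarchimedean} via the embeddings $G/(N_1\cap N_2)\hookrightarrow G/N_1\times G/N_2$ and $G/f^{-1}(N)\hookrightarrow H/N$, and then read off the separation radical as the intersection of the normal subgroups $N$ with $G/N\in\mathcal K$. Beyond the paper's proof, you also check that the filter contains $G$, identify the topology with the pro-$\mathcal K$ topology using the fact that these normal subgroups are cofinal in the filter, and confirm that the four named classes are pseudovarieties.
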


\begin{proof}
Finite-intersection closure follows because
\[
 G/(N_1\cap N_2)\longrightarrow G/N_1\times G/N_2
\]
is injective.  If \(f:G\to H\) and \(H/N\in\mathcal K\), then
\(G/f^{-1}(N)\) is isomorphic to a subgroup of \(H/N\); hence
\eqref{eq:subgroup-pullback-filter} holds.  Theorem
\ref{thm:subgroup-nonarchimedean} now gives the extensional
presentation and its associated group topology.  Its
separation radical is the intersection of the kernels of all
homomorphisms from \(G\) to members of \(\mathcal K\), which is trivial
exactly under the usual residual condition.
\end{proof}

The profinite and pro-\(p\) topologies and their completions are
standard; see Ribes and Zalesskii \cite{RibesZalesskii}.  In the
profinite case the open subgroups are precisely the finite-index
subgroups, since every such subgroup contains its finite-index normal
core.  In the pro-\(p\) case the correct functorial basis consists of
subgroups containing a normal subgroup with finite \(p\)-group
quotient; a subgroup of \(p\)-power index need not itself have this
property.  On restricting the base to free groups, both the profinite
and the pro-\(p\) examples are Hausdorff on every object.

There is a complementary class-detecting construction.  We use
\emph{preradical} for an assignment
\(R(G)\leqslant G\) satisfying
\begin{equation}\label{eq:functorial-subgroup}
                         f(R(G))\leqslant R(H)
\end{equation}
for every \(f:G\to H\).  Such subgroups are fully invariant and hence
normal.  The normal-subfunctor formulation and the basic commutator
example already appear in Eilenberg and Mac~Lane
\cite[Section~16]{EilenbergMacLane}.

\begin{theorem}\label{thm:preradical-topology}
Every group preradical \(R\) determines an extensional presentation of
the subgroup form whose kernel is the principal filter
\[
             \mathcal N_R(G)=
             \{U\leqslant G\mid R(G)\leqslant U\}.
\]
The induced group topology is the inverse image of the discrete
topology on \(G/R(G)\), and \(G\) is Hausdorff if and only if
\(R(G)=1\).
\end{theorem}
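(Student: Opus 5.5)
The plan is to reduce everything to Theorem~\ref{thm:subgroup-nonarchimedean}. By that theorem it suffices to check that \(G\mapsto\mathcal N_R(G)\) satisfies conditions (i) and (ii). We then identify the resulting topology and compute its separation radical with Lemma~\ref{lem:hausdorff-intrinsic}.

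Condition (i) is immediate. \(\mathcal N_R(G)\) is the principal filter of subgroups above \(R(G)\): it contains \(G\), it is upward closed, and it is closed under intersection, since \(R(G)\leqslant U,V\) implies \(R(G)\leqslant U\cap V\). For condition (ii), let \(f:G\to H\) and \(U\in\mathcal N_R(H)\). The preradical inequality \eqref{eq:functorial-subgroup} gives \(f(R(G))\leqslant R(H)\leqslant U\), hence \(R(G)\leqslant f^{-1}(U)\), which is \eqref{eq:subgroup-pullback-filter}. Theorem~\ref{thm:subgroup-nonarchimedean} then supplies the extensional presentation, strong as well, with kernel \(\mathcal N_R\). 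Explicitly, its quotient identifies \(A,B\) when \(A\cap R(G)=B\cap R(G)\), since \(R(G)\) is the least witness.

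For the topology, Theorem~\ref{thm:subgroup-nonarchimedean} declares \(O\) open when each \(x\in O\) has some \(xU\subseteq O\) with \(U\in\mathcal N_R(G)\). Because every such \(U\) contains \(R(G)\), this is equivalent to \(xR(G)\subseteq O\) for all \(x\in O\), that is, to \(O\) being a union of cosets of \(R(G)\). Here we use that \(R(G)\) is normal, so left and right cosets agree. These unions are exactly the preimages under \(G\to G/R(G)\) of arbitrary subsets, so the topology is the inverse image of the discrete topology.

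Finally, since \(\mathcal N_R(G)\) is principal with generator \(R(G)\), Lemma~\ref{lem:hausdorff-intrinsic} gives \(\rho_{\mathbb E}(G)=R(G)\). Hence \(G\) is Hausdorff exactly when \(R(G)=1\), the least subgroup. This agrees with the topological statement, because the inverse image of the discrete topology is Hausdorff iff \(R(G)\) is trivial.

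I expect no real obstacle. The only step requiring care is the topology identification, which uses the normality of \(R(G)\) noted just before the statement.
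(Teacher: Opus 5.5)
Your proposal is correct and follows essentially the same route as the paper's proof: you derive the pullback-filter condition \eqref{eq:subgroup-pullback-filter} from the preradical inequality \eqref{eq:functorial-subgroup}, invoke Theorem~\ref{thm:subgroup-nonarchimedean}, identify the open sets as unions of cosets of \(R(G)\), and read off the Hausdorff criterion from Lemma~\ref{lem:hausdorff-intrinsic}. The only difference is that you spell out details the paper leaves implicit, namely the filter axioms, the explicit quotient relation \(A\cap R(G)=B\cap R(G)\), and the coset computation.
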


\begin{proof}
Equation \eqref{eq:functorial-subgroup} implies
\eqref{eq:subgroup-pullback-filter}, so Theorem
\ref{thm:subgroup-nonarchimedean} applies.  The open subsets are exactly
the unions of cosets of \(R(G)\), proving the quotient description.
Lemma~\ref{lem:hausdorff-intrinsic} gives the Hausdorff assertion.
\end{proof}

For \(R(G)=[G,G]\), the Hausdorff objects are the abelian groups.  The
choices \(R(G)=\gamma_{c+1}(G)\) and \(R(G)=G^{(d)}\) detect,
respectively, the groups nilpotent of class at most \(c\) and the groups
of derived length at most \(d\).  Verbal subgroups similarly detect
arbitrary varieties of groups.  A downward directed family of
preradicals \((R_i)\) gives the filter of subgroups containing some
\(R_i(G)\); its Hausdorff objects satisfy
\[
                           \bigcap_iR_i(G)=1.
\]
The lower central and derived series therefore recover the usual
residually nilpotent and residually solvable groups.  See Robinson
\cite{Robinson} for the group-theoretic background.

The minimal distributivity makes the classification in Theorem
\ref{thm:subgroup-nonarchimedean} exhaustive.  One may instead select
vertical exact joins in subgroup lattices; directed unions are basic
examples.  If the common-witness relation
\eqref{eq:subgroup-local-relation} respects those joins, then its
minimal-distributivity extensional presentation remains one for the
richer distributivity.  In particular, all principal preradical
examples do so, because intersection with \(R(G)\) preserves every
exact join by definition of exactness.

\subsection{The form of ideals of commutative rings}

Let \(\CRing\) denote the category of commutative rings with identity,
including the zero ring, and identity-preserving homomorphisms.  Its
\emph{form of ideals}
\[
                         \Idl\longrightarrow\CRing
\]
has the ideals \(I\trianglelefteq R\) as clusters over \(R\), with an
arrow \(I\to J\) above \(f:R\to S\) when \(f(I)\subseteq J\).
Thus
\[
 \top_R=R,\qquad \bot_R=0,\qquad I\wedge J=I\cap J,\qquad
 f^*J=f^{-1}(J).
\]
A structured sink has as its final lift the ideal generated by all the
images:
\[
       \nabla_i(I_i,f_i)=
       \left\langle\bigcup_i f_i(I_i)\right\rangle_R.
\]
As with subgroups, arbitrary such final lifts do not distribute over
intersections.  We first give \(\Idl\) the minimal distributivity.

Recall that a topological ring is \emph{linearly topologized} when zero
has a neighbourhood basis consisting of ideals
\cite[Tag~07E8]{StacksProject}.  A \emph{functorial linear ring
topology} on \(\CRing\) will mean a choice of such a topology on every
ring for which every unital homomorphism is continuous.

\begin{theorem}\label{thm:ideal-linear-topologies}
Extensional presentations of the minimally distributive ideal form are
naturally in bijection with functorial linear ring topologies.  They are
equivalently its strong extensional presentations.  Explicitly, they are
assignments \(R\mapsto\mathcal N(R)\) such that
\(\mathcal N(R)\) is a filter of ideals (the improper filter being
allowed) and
\begin{equation}\label{eq:ideal-pullback-filter}
 J\in\mathcal N(S),\quad f:R\longrightarrow S
 \quad\Longrightarrow\quad f^{-1}(J)\in\mathcal N(R).
\end{equation}
The kernel fibre \(\mathcal N(R)\) is the filter of open ideals.  A
ring \(R\) is Hausdorff in the sense of Definition
\ref{def:hausdorff-object} if and only if its associated linear
topology is separated, or equivalently Hausdorff.
\end{theorem}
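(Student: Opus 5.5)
The argument will follow the proof of Theorem~\ref{thm:subgroup-nonarchimedean}, replacing subgroups by ideals throughout. First, given an extensional presentation of the minimally distributive ideal form with kernel \(K\), Theorem~\ref{thm:kernels} shows that each \(K_R\) is a filter of ideals. The subform is stable under inverse image, which is exactly \eqref{eq:ideal-pullback-filter}.

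Conversely, starting from an assignment \(\mathcal N\) with these properties, I define the common-witness relation
\[
 I\equiv_{\mathcal N,R}J\iff I\cap U=J\cap U\text{ for some }U\in\mathcal N(R).
\]
This is a meet congruence: transitivity uses meets of witnesses. It is indexed by \eqref{eq:ideal-pullback-filter}, since \(f^{-1}\) commutes with intersections. Because only unit presentations are admissible, it is automatically a distributivity congruence. Its top class is \(\mathcal N(R)\), since the filter is upward closed. It is also the least meet congruence sending \(\mathcal N(R)\) to top, by the computation \([I]=[I\cap U]=[J\cap U]=[J]\). Lemma~\ref{lem:kernel-criterion} then makes \(\mathcal N\) the kernel of an extensional presentation. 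Such presentations are determined by their kernels, so this gives a bijection between extensional presentations and the filter assignments.

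Next I connect filter assignments with topologies. Given \(\mathcal N\), I declare \(O\subseteq R\) open when every \(x\in O\) has some \(U\in\mathcal N(R)\) with \(x+U\subseteq O\). The case \(\mathcal N(R)\) empty does not occur, because filters contain \(R\). Since ideals are additive subgroups closed under multiplication by arbitrary ring elements, this is a topological ring structure: addition is continuous via \(U+U\subseteq U\), and multiplication via
\[
(x+U)(y+U)\subseteq xy+U,
\]
with no conjugation issue because the rings are commutative. The open ideals are exactly the members of \(\mathcal N(R)\). Indeed, an open ideal contains some \(U\) and so lies in the filter by upward closure; conversely, each \(U\) is open because it is a union of cosets of itself. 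Continuity of a unital homomorphism \(f\) at \(0\) amounts to \(f^{-1}(J)\) containing a member of \(\mathcal N(R)\) for each \(J\in\mathcal N(S)\), which by upward closure is equivalent to \eqref{eq:ideal-pullback-filter}. Translation then gives continuity everywhere. In the other direction, a functorial linear topology yields its filter of open ideals, and this filter satisfies the conditions. The two constructions are inverse because a linear topology is determined by its basis of open ideals at \(0\).

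For the Hausdorff clause, \(\bot_R=0\), so Definition~\ref{def:hausdorff-object} asks that \(0\) be the greatest lower bound of \(\mathcal N(R)\) in the ideal lattice. That greatest lower bound is \(\bigcap\mathcal N(R)\), which is an ideal and is the closure of \(\{0\}\). For topological groups, the closure of \(\{0\}\) is trivial exactly when the topology is \(T_1\), equivalently Hausdorff, equivalently separated. The strong assertion follows from Theorem~\ref{thm:ordinary-strong-presentations}, since the minimal distributivity is strong.

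The main obstacle is mild: one must check that upward closure lets every open ideal count as a member of \(\mathcal N(R)\), and that the zero ring, where \(\top=\bot\), causes no trouble. It does not, since its only filter is \(\{0\}\).
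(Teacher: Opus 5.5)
Your proposal is correct and follows essentially the same route as the paper. The steps match: the kernel gives a pullback-stable filter, the common-witness congruence together with Lemma~\ref{lem:kernel-criterion} gives the converse, the cosets of members of \(\mathcal N(R)\) define the linear topology whose open ideals are exactly \(\mathcal N(R)\), the separation radical is the closure of zero, and strongness comes from the minimal distributivity. Beyond that, you only spell out a few checks the paper leaves implicit: continuity of multiplication, the identification of open ideals with \(\mathcal N(R)\), and the zero ring.
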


\begin{proof}
The kernel fibres of an extensional presentation are filters and their inverse-image
stability is exactly \eqref{eq:ideal-pullback-filter}.  Conversely, for
such a family define
\begin{equation}\label{eq:ideal-local-relation}
 I\equiv_{\mathcal N,R}J
 \quad\Longleftrightarrow\quad
 I\cap U=J\cap U\text{ for some }U\in\mathcal N(R).
\end{equation}
The same meet-congruence argument as in the proof of Theorem
\ref{thm:subgroup-nonarchimedean} shows that these are indexed
distributivity congruences, that their top classes are precisely
\(\mathcal N(R)\), and that they are the least congruences with those
top classes.  Lemma~\ref{lem:kernel-criterion} therefore gives a
unique extensional presentation of \(\Idl\), and every extensional
presentation is obtained in this way.

Starting from \(\mathcal N(R)\), take the cosets of its members as a
neighbourhood basis on the additive group of \(R\).  Because the basis
consists of ideals, addition and multiplication are continuous, so this
is a linear ring topology.  Formula
\eqref{eq:ideal-pullback-filter} says exactly that every ring
homomorphism is continuous.  Conversely, the open ideals of a
functorial linear topology form a pullback-stable filter.  These
constructions are inverse.  Finally,
\[
                 \rho_{\mathbb E}(R)
                    =\bigcap_{U\in\mathcal N(R)}U
\]
is the closure of zero, which vanishes exactly when the topology is
Hausdorff.
The minimal distributivity is strong, so Theorem
\ref{thm:ordinary-strong-presentations} gives the strong assertion.
\end{proof}

For completeness, the canonical quotients in the two algebraic
examples identify substructures which agree inside an open
neighbourhood, as in \eqref{eq:subgroup-local-relation} and
\eqref{eq:ideal-local-relation}.  They serve to realize exactness here;
no standard group- or ring-theoretic construction is being asserted.

The finite-quotient examples have a precise analogue.  Let
\(\mathcal K\) be a nonempty pseudovariety of finite commutative rings,
closed under finite products, unital subrings, and quotients.  With our
convention that the zero ring belongs to \(\CRing\), quotient closure
forces the zero ring to belong to \(\mathcal K\).  Define
\begin{equation}\label{eq:ring-pro-K-filter}
 \mathcal N_{\mathcal K}(R)=
 \{U\trianglelefteq R\mid
   I\subseteq U\text{ for some }I\trianglelefteq R
   \text{ with }R/I\in\mathcal K\}.
\end{equation}

\begin{theorem}\label{thm:ring-pro-K-topology}
The filters \(\mathcal N_{\mathcal K}\) determine an extensional
presentation of the ideal form.  The associated linear topology is the
pro-\(\mathcal K\) topology, and its Hausdorff objects are exactly the
residually \(\mathcal K\) rings.  Taking all finite commutative rings
gives the profinite topology.  Every commutative ring finitely
generated as a unital ring, equivalently every commutative
\(\mathbb Z\)-algebra of finite type, is Hausdorff for this topology.
\end{theorem}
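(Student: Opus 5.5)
The plan is to verify the two hypotheses of Theorem~\ref{thm:ideal-linear-topologies} for \(\mathcal N_{\mathcal K}\): it is a filter of ideals, and it is stable under inverse images. That theorem then supplies the extensional presentation and its linear topology, and Lemma~\ref{lem:hausdorff-intrinsic} handles the separation statements. The argument is the ring analogue of Theorem~\ref{thm:pro-K-topology}.

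\emph{Filter and pullback stability.} Upward closure is built into \eqref{eq:ring-pro-K-filter}. The filter is nonempty because the zero ring lies in \(\mathcal K\), so \(I=R\) witnesses \(R\in\mathcal N_{\mathcal K}(R)\). For binary meets, take witnesses \(I_1,I_2\). The ring \(R/(I_1\cap I_2)\) embeds as a unital subring of \(R/I_1\times R/I_2\), so closure under finite products and unital subrings puts it in \(\mathcal K\). For stability, let \(f:R\to S\) and \(S/I\in\mathcal K\). The induced unital map \(R/f^{-1}(I)\to S/I\) is injective, so \(R/f^{-1}(I)\) is isomorphic to a unital subring of a member of \(\mathcal K\). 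Moreover, if \(I\subseteq U\) then \(f^{-1}(I)\subseteq f^{-1}(U)\), which gives \eqref{eq:ideal-pullback-filter}. Theorem~\ref{thm:ideal-linear-topologies} now produces the extensional presentation. Its associated topology has as a basis at zero the ideals with quotient in \(\mathcal K\), which is by definition the pro-\(\mathcal K\) topology. Taking \(\mathcal K\) to be all finite commutative rings gives the profinite topology.

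\emph{Hausdorff objects.} By Definition~\ref{def:hausdorff-object} and Theorem~\ref{thm:ideal-linear-topologies}, \(R\) is Hausdorff exactly when \(\bigcap\mathcal N_{\mathcal K}(R)=0\). Since every member of the filter contains a witness \(I\), this intersection equals the intersection of all ideals \(I\) with \(R/I\in\mathcal K\). That intersection is zero precisely when \(R\) is residually \(\mathcal K\).

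\emph{Finitely generated rings.} This is the main obstacle, since it needs genuine commutative algebra rather than formal manipulation. The plan is to show that every finitely generated commutative ring \(R\) is residually finite. Take \(0\neq r\in R\). The annihilator \(\operatorname{Ann}(r)\) is a proper ideal, so choose a maximal ideal \(\mathfrak m\supseteq\operatorname{Ann}(r)\). Then \(r\neq0\) in the localization \(R_{\mathfrak m}\). Krull's intersection theorem in the Noetherian local ring \(R_{\mathfrak m}\) gives \(r\notin\mathfrak m^nR_{\mathfrak m}\) for some \(n\). Because \(\mathfrak m^n\) is \(\mathfrak m\)-primary, \(\mathfrak m^nR_{\mathfrak m}\cap R=\mathfrak m^n\), so \(r\notin\mathfrak m^n\). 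Finally, \(R/\mathfrak m\) is a finite field, since a finitely generated ring that is a field is finite; this is the arithmetic Nullstellensatz. Each \(\mathfrak m^i/\mathfrak m^{i+1}\) is then a finitely generated vector space over \(R/\mathfrak m\), so \(R/\mathfrak m^n\) is finite. Thus the finite quotients of \(R\) separate points, and \(R\) is Hausdorff for the profinite topology. The two inputs I expect to cite rather than reprove are Krull's intersection theorem and the finiteness of residue fields of finitely generated \(\mathbb Z\)-algebras.
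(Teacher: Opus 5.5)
Your proposal is correct. For the filter, pullback-stability, topology and residual-\(\mathcal K\) parts it follows the paper's proof essentially verbatim. The paper uses the same injections \(R/(I_1\cap I_2)\hookrightarrow R/I_1\times R/I_2\) and \(R/f^{-1}(J)\hookrightarrow S/J\), then invokes Theorem~\ref{thm:ideal-linear-topologies} and reads off the separation radical.

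The genuine difference is the final assertion. The paper does not prove residual finiteness of finitely generated commutative rings; it cites it as the classical theorem of Orzech and Ribes \cite{OrzechRibes} (see also \cite[Lemma~2.1]{BellDanchev}). You instead derive it directly: localize at a maximal ideal \(\mathfrak m\supseteq\operatorname{Ann}(r)\), apply Krull's intersection theorem in \(R_{\mathfrak m}\), and use finiteness of residue fields of finitely generated rings to make \(R/\mathfrak m^n\) finite. Your route makes the theorem self-contained modulo two textbook facts. It also shows that the separating finite quotients can be chosen local, of the form \(R/\mathfrak m^n\). The paper's citation is simply shorter.

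Two small points to tighten. First, state explicitly that \(R\) is Noetherian by the Hilbert basis theorem. You use this both for Krull's theorem in \(R_{\mathfrak m}\) and for finite generation of \(\mathfrak m^i\), which makes \(\mathfrak m^i/\mathfrak m^{i+1}\) finite. Second, the primary-contraction identity \(\mathfrak m^nR_{\mathfrak m}\cap R=\mathfrak m^n\) is more than you need. The trivial inclusion suffices: if \(r\in\mathfrak m^n\), then \(r/1\in\mathfrak m^nR_{\mathfrak m}\).
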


\begin{proof}
If \(R/I_1,R/I_2\in\mathcal K\), then
\[
 R/(I_1\cap I_2)\longrightarrow R/I_1\times R/I_2
\]
is injective, so \eqref{eq:ring-pro-K-filter} is closed under finite
intersections.  If \(f:R\to S\) and \(S/J\in\mathcal K\), then
\(R/f^{-1}(J)\) is isomorphic to a unital subring of \(S/J\).
This proves \eqref{eq:ideal-pullback-filter}, and Theorem
\ref{thm:ideal-linear-topologies} applies.  The separation radical is
the intersection of the kernels of all homomorphisms from \(R\) into
members of \(\mathcal K\), giving the residual characterization.  The
last assertion is the classical residual-finiteness theorem for
commutative rings finitely generated as unital rings
\cite{OrzechRibes}; see also the explicit formulation in
\cite[Lemma~2.1]{BellDanchev}.
\end{proof}

Consequently, the profinite example restricts to a functorial topology
on finitely generated commutative rings for which every object is
Hausdorff, while remaining non-discrete on objects such as
\(\mathbb Z\).  For the broader universal-algebraic setting of
profinite topologies, see Almeida and Costa \cite{AlmeidaCosta}.

Adic topologies arise from functorial powers.  To state the construction
once, let \(U:\mathcal A\to\CRing\) be any functor and suppose that each
object \(A\) is equipped with an ideal
\(\mathfrak a_A\trianglelefteq U(A)\) such that
\[
             U(f)(\mathfrak a_A)\subseteq\mathfrak a_B
\]
for every \(f:A\to B\).  Pull back the ideal form along \(U\).

\begin{theorem}\label{thm:adic-form-topology}
The ideals containing some power \(\mathfrak a_A^n\) form the kernel of
an extensional presentation of the pulled-back ideal form.  The
corresponding classical
topology on \(U(A)\) is the \(\mathfrak a_A\)-adic linear topology, and
\(U(A)\) is Hausdorff exactly when
\begin{equation}\label{eq:adic-separated}
                       \bigcap_{n\geqslant0}\mathfrak a_A^n=0.
\end{equation}
In particular:
\begin{enumerate}[label=\textup{(\roman*)}]
\item for a fixed prime \(p\), on \(\CRing\) the choice
\(\mathfrak a_R=pR\) gives the usual
\(p\)-adic topology;
\item on the category of pairs \((R,I)\), with morphisms
\(f:(R,I)\to(S,J)\) satisfying \(f(I)\subseteq J\), the distinguished
ideal gives the usual \(I\)-adic topology;
\item on the category of Noetherian local rings and local
homomorphisms, the maximal-ideal-adic topology is Hausdorff on every
object.
\end{enumerate}
\end{theorem}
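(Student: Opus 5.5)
The plan is to reduce everything to the ideal-form classification of Theorem~\ref{thm:ideal-linear-topologies}, transported along \(U\).  The pulled-back ideal form over \(\mathcal A\) has as fibre over \(A\) the ideals of \(U(A)\), with inverse image along \(f\) given by \(U(f)^{-1}\), and we give it the minimal distributivity.  The proof of Theorem~\ref{thm:ideal-linear-topologies} never used the base category \(\CRing\) itself.  It used only three ingredients: the fibres are ideal lattices, the inverse images are preimages, and the minimal distributivity imposes no join compatibility.  So the common-witness relation \eqref{eq:ideal-local-relation}, combined with Lemma~\ref{lem:kernel-criterion}, shows that any assignment \(A\mapsto\mathcal N(A)\) of ideal filters stable under \(U(f)^{-1}\) is the kernel of an extensional presentation.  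I would state this transport explicitly rather than re-prove it.

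Set \(\mathcal N(A)=\{V\trianglelefteq U(A)\mid \mathfrak a_A^n\subseteq V\text{ for some }n\}\).  It is upward closed and contains \(U(A)=\mathfrak a_A^0\).  It is closed under finite intersections because \(\mathfrak a_A^{\max(m,n)}\subseteq\mathfrak a_A^m\cap\mathfrak a_A^n\).

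The key step is pullback stability.  From \(U(f)(\mathfrak a_A)\subseteq\mathfrak a_B\) and multiplicativity of ring homomorphisms we get \(U(f)(\mathfrak a_A^n)\subseteq\mathfrak a_B^n\).  Hence if \(\mathfrak a_B^n\subseteq V\), then \(\mathfrak a_A^n\subseteq U(f)^{-1}(V)\).  Here one should note that \(U(f)(\mathfrak a_A^n)\) is contained in the ideal generated by the image, which lies in \(\mathfrak a_B^n\), since \(\mathfrak a_A^n\) is generated additively by products.

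The associated topology takes the cosets of the members of \(\mathcal N(A)\) as a neighbourhood basis.  These cosets are generated by the cosets of the powers \(\mathfrak a_A^n\), so this is by definition the \(\mathfrak a_A\)-adic topology.  For the Hausdorff claim, the filter is not principal in general, so Lemma~\ref{lem:hausdorff-intrinsic} does not apply directly.  Instead, the greatest lower bound of \(\mathcal N(A)\) in the ideal lattice is \(\bigcap_n\mathfrak a_A^n\), because the powers are cofinal (coinitial) in the filter.  So \(A\) is Hausdorff iff this intersection equals \(\bot=0\), which is \eqref{eq:adic-separated}.

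For the examples, it only remains to check the functoriality hypothesis \(U(f)(\mathfrak a_A)\subseteq\mathfrak a_B\) in each case:
\begin{enumerate}[label=\textup{(\roman*)}]
\item For \(\mathfrak a_R=pR\), we have \(f(pR)=p\,f(R)\subseteq pS\).
\item For pairs \((R,I)\), the condition holds by the definition of morphisms, with \(U\) the forgetful functor.
\item For Noetherian local rings, local homomorphisms satisfy \(f(\mathfrak m_R)\subseteq\mathfrak m_S\), and separatedness is the Krull intersection theorem.
\end{enumerate}

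I expect the only real obstacle to be the transport step.  One must make sure that the pulled-back form over \(\mathcal A\), which need not be amnestic or faithful on the nose when \(U\) is not injective on objects, is treated as an indexed meet-semilattice over \(\mathcal A\).  In that setting the earlier results (Lemma~\ref{lem:kernel-criterion} and the common-witness argument) apply verbatim.  The remaining steps are routine.
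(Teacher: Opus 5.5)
Your proposal is correct and follows the paper's proof essentially step for step. You transport the ideal-form classification of Theorem~\ref{thm:ideal-linear-topologies} verbatim to the base \(\mathcal A\), get the filter and its pullback stability from directedness of the powers and \(U(f)(\mathfrak a_A^n)\subseteq\mathfrak a_B^n\), read off Hausdorffness from the greatest lower bound \(\bigcap_n\mathfrak a_A^n\), and for the examples check \(U(f)(\mathfrak a_A)\subseteq\mathfrak a_B\) together with Krull's intersection theorem.

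The obstacle you anticipate in the transport step is not real. A strict pullback of a faithful amnestic functor along any functor is again faithful and amnestic, so the pulled-back ideal form is already a genuine form over \(\mathcal A\) and no extra care is needed there.
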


\begin{proof}
The powers are downward directed, and functoriality gives
\[
 U(f)(\mathfrak a_A^n)\subseteq\mathfrak a_B^n.
\]
The resulting filters therefore satisfy
\eqref{eq:ideal-pullback-filter}.  The proof of Theorem
\ref{thm:ideal-linear-topologies} applies verbatim over \(\mathcal A\):
it identifies the induced topology on \(U(A)\) with the ordinary adic
topology, while Definition~\ref{def:hausdorff-object} gives
\eqref{eq:adic-separated}.  The first two cases are immediate.
In the third, local homomorphisms preserve maximal ideals and Krull's
intersection theorem gives
\(\bigcap_n\mathfrak m_R^n=0\) for every Noetherian local ring \(R\);
see \cite[Tag~00IP]{StacksProject}.
\end{proof}

There are also principal examples analogous to group preradicals.  A
\emph{functorial ideal assignment} is a choice
\(\mathfrak r(R)\trianglelefteq R\) such that
\[
                         f(\mathfrak r(R))
                              \subseteq\mathfrak r(S)
\]
for every \(f:R\to S\).

\begin{theorem}\label{thm:functorial-ideal-topology}
Every functorial ideal assignment \(\mathfrak r\) determines an
extensional presentation of the ideal form with kernel
\[
 \mathcal N_{\mathfrak r}(R)
   =\{U\trianglelefteq R\mid\mathfrak r(R)\subseteq U\}.
\]
The associated ring topology is pulled back from the discrete topology
on \(R/\mathfrak r(R)\), and its Hausdorff objects are precisely the
rings for which \(\mathfrak r(R)=0\).
\end{theorem}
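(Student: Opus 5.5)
The plan mirrors the proof of Theorem~\ref{thm:preradical-topology}, with Theorem~\ref{thm:ideal-linear-topologies} in place of Theorem~\ref{thm:subgroup-nonarchimedean}.

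\emph{The assignment is a legitimate neighbourhood filter.} For each ring \(R\), the set \(\mathcal N_{\mathfrak r}(R)\) is the principal filter of ideals above \(\mathfrak r(R)\). It contains \(R\) and is closed under finite intersections and enlargement. For pullback stability \eqref{eq:ideal-pullback-filter}, take \(f:R\to S\) and an ideal \(J\) of \(S\) with \(\mathfrak r(S)\subseteq J\). The functoriality condition gives \(f(\mathfrak r(R))\subseteq\mathfrak r(S)\subseteq J\), hence \(\mathfrak r(R)\subseteq f^{-1}(J)\), so \(f^{-1}(J)\in\mathcal N_{\mathfrak r}(R)\). Theorem~\ref{thm:ideal-linear-topologies} then says \(\mathcal N_{\mathfrak r}\) is the kernel of an extensional presentation of the minimally distributive ideal form. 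Its canonical quotient is the common-witness relation \eqref{eq:ideal-local-relation}. Since the filter is principal, this relation simplifies to \(I\cap\mathfrak r(R)=J\cap\mathfrak r(R)\), so the quotient may be written as \(I\mapsto I\cap\mathfrak r(R)\).

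\emph{Identifying the topology.} The associated linear topology has as neighbourhood basis at \(a\in R\) the cosets \(a+U\) with \(\mathfrak r(R)\subseteq U\). The smallest of these is \(a+\mathfrak r(R)\), so the open sets are exactly the unions of cosets of \(\mathfrak r(R)\). These are precisely the preimages of arbitrary subsets of \(R/\mathfrak r(R)\) under the quotient map, which is the inverse image of the discrete topology.

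\emph{Hausdorff objects.} Lemma~\ref{lem:hausdorff-intrinsic} applies with \(R_X=\mathfrak r(R)\) generating the principal kernel filter. It gives \(\rho_{\mathbb E}(R)=\mathfrak r(R)\), so \(R\) is Hausdorff exactly when \(\mathfrak r(R)=0=\bot_R\).

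The only step requiring care is pullback stability, and it is a one-line consequence of functoriality. If the statement is meant for the richer distributivity with exact vertical joins, the remark after the group examples covers it: intersection with \(\mathfrak r(R)\) preserves exact joins. I would only cite that remark, not prove it here.
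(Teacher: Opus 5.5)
Your proposal is correct and follows essentially the same route as the paper. Like the paper, you verify that the principal filters satisfy \eqref{eq:ideal-pullback-filter} via functoriality, invoke Theorem~\ref{thm:ideal-linear-topologies}, identify the open sets as unions of cosets of \(\mathfrak r(R)\), and read off Hausdorffness from Lemma~\ref{lem:hausdorff-intrinsic}. Your explicit simplification of the common-witness relation to \(I\cap\mathfrak r(R)=J\cap\mathfrak r(R)\) is a harmless extra that the paper leaves implicit.
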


\begin{proof}
The principal filters satisfy
\eqref{eq:ideal-pullback-filter}, so Theorem
\ref{thm:ideal-linear-topologies} gives the extensional presentation
and its associated ring topology.  Its open sets
are exactly the unions of cosets of \(\mathfrak r(R)\).  The last
assertion is Lemma~\ref{lem:hausdorff-intrinsic}.
\end{proof}

Several familiar classes are obtained in this way.  The nilradical
detects reduced rings.  The ideal \(nR\), for a fixed positive integer
\(n\), detects rings whose characteristic divides \(n\).  The
additive-torsion ideal of \(R\) detects rings with
torsion-free additive group.  Finally, the ideal
\[
                    (\,x^2-x\mid x\in R\,)
\]
detects Boolean rings.  Each assignment is functorial under all unital
homomorphisms.  By contrast, the Jacobson radical is not functorial for
arbitrary homomorphisms: under the inclusion
\(\mathbb Z_{(p)}\hookrightarrow\mathbb Q\), the element \(p\) belongs
to \(J(\mathbb Z_{(p)})\), whereas \(J(\mathbb Q)=0\).  It therefore
gives an extensional presentation by this construction only after the
base category or
its morphisms are suitably restricted.

Filters of ideals also occur in the theory of Gabriel topologies, but
the two notions should not be conflated.  For a commutative ring, every
upward-closed ideal filter is automatically stable under colons, since
\(I\subseteq(I:r)\).  A Gabriel topology additionally satisfies the
Gabriel transitivity, or locality, axiom and corresponds to a
hereditary torsion theory; see Gabriel \cite{Gabriel} and Stenstr\"om
\cite{Stenstrom}.  Conversely, a Gabriel topology is attached to one
ring and carries no cross-ring inverse-image requirement.  For example,
the principal nilradical filter on
\(k[\varepsilon]/(\varepsilon^2)\) arises from the present functorial
construction but is not a Gabriel topology.

As for subgroups, using the minimal distributivity is what gives the
full classification by linear topologies.  Selecting vertical exact
joins imposes a stronger compatibility.  Whenever
\eqref{eq:ideal-local-relation} respects the selected joins, the same
extensional presentation remains one.  The principal examples in
Theorem~\ref{thm:functorial-ideal-topology} satisfy this condition for
all vertical exact joins, because meeting with \(\mathfrak r(R)\)
preserves every exact join.

\section{Final Remarks}
\label{sec:final-remarks}

\subsection{Support and the universal forms}

Recall the support sieve \(\supp[F]{A}\) from
\eqref{eq:support-sieve-early}.  An admissible final presentation
\(U=\nabla_i(A_i,f_i)\) is \emph{tight} when
\[
           \supp[F]{U}=\bigcup_i f_i\supp[F]{A_i},
\]
and it is \emph{top-generated} when all the \(A_i\) are top clusters.

\begin{theorem}\label{thm:universal-sieve-form}
Let \(F\) be a distributivity form over \(\C\).
\begin{enumerate}[label=\textup{(\roman*)}]
\item The support assignments define a morphism of the underlying
finite-initial-lift forms
\[
                         \sigma_F:F\longrightarrow\Sieve.
\]
They define a distributivity morphism if and only if every admissible
presentation of \(F\) is tight.
\item If every admissible presentation is tight and every cluster has
an admissible top-generated presentation, then \(\sigma_F\) is the
unique distributivity morphism \(F\to\Sieve\).
\item There is at most one distributivity morphism
\(\lambda_F:\Sieve\to F\).  If it exists, it is forced to have the
formula
\begin{equation}\label{eq:lambda-formula}
 \lambda_F(R)=\nabla_{f\in R}(\top_{\operatorname{dom}f},f).
\end{equation}
It exists exactly when the displayed final lifts are admissible, the
assignments \(\lambda_F\) preserve finite initial lifts, and they carry
all presentations of the maximal sieve distributivity to admissible
presentations of \(F\).
\end{enumerate}
Consequently, \(F\) is isomorphic to the maximally distributive form
of sieves when the hypotheses of (ii) and (iii) hold simultaneously.
\end{theorem}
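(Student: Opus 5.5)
For (i), the underlying morphism was already established before Theorem~\ref{thm:characterize-sieve-form}: supports preserve top, binary meets and inverse images. It remains to compare $\sigma_F$ with the admissible presentations. Since the maximal distributivity of $\Sieve$ admits every sink, and its final lift is given by \eqref{eq:sieve-final-lift}, $\sigma_F$ preserves an admissible presentation $U=\nabla_i(A_i,f_i)$ exactly when
\[
\supp[F]{U}=\bigcup_i f_i\supp[F]{A_i}.
\]
This is the definition of tightness, so (i) needs no further argument. In the proof I will note only one thing: the inclusion $\supberset$ always holds. Indeed $A_i\leqslant f_i^*U$, so $g\in\supp{A_i}$ gives $(f_ig)^*U\geqslant g^*A_i=\top$. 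Thus tightness is the reverse inclusion.

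For (ii), I plan to show that any distributivity morphism $u:F\to\Sieve$ equals $\sigma_F$. The first step is to check that $u$ sends top clusters to maximal sieves, which holds because morphisms preserve top. Now take a cluster $A$ with an admissible top-generated presentation $A=\nabla_i(\top,f_i)$. Preservation of this presentation gives
\[
u(A)=\bigcup_i f_i\top=\bigcup_i\prin{f_i}.
\]
Applying the same reasoning to $\sigma_F$, using (i), gives the identical formula. Hence $u(A)=\sigma_F(A)$, and this holds for every cluster $A$.

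For (iii), uniqueness comes from the same argument in the other direction. Every sieve satisfies $R=\nabla_{f\in R}(\top_{\operatorname{dom}f},f)$ in $\Sieve$, and this presentation is admissible in the maximal distributivity. Any morphism $\lambda$ preserves top and must carry this presentation to an admissible final presentation in $F$, which forces \eqref{eq:lambda-formula}. For existence, the stated conditions are exactly the definition of a distributivity morphism, once I add naturality: inverse-image compatibility is part of ``preserve finite initial lifts'', and the displayed lifts must be admissible for the formula to make sense. So existence is essentially definitional.

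The final consequence is the step I expect to be the main obstacle. Under (ii) and (iii) we have morphisms $\sigma_F:F\to\Sieve$ and $\lambda_F:\Sieve\to F$. The composite $\sigma_F\lambda_F$ is an endomorphism of $\Sieve$. Since $\Sieve$ has tight presentations (its supports are the identity) and top-generated admissible presentations, (ii) applied to $\Sieve$ gives $\sigma_F\lambda_F=1$. The composite $\lambda_F\sigma_F$ is an endomorphism of $F$. Here I intend to use the hypothesis of (ii) that each $A$ has an admissible top-generated presentation $A=\nabla_i(\top,f_i)$. Then
\[
\lambda_F\sigma_F(A)=\lambda_F\Bigl(\bigcup_i\prin{f_i}\Bigr)
\]
and this should also equal $\nabla_i(\top,f_i)=A$ by preservation, since $\bigcup_i\prin{f_i}=\nabla_i(\top,f_i)$ in $\Sieve$ and $\lambda_F$ preserves that admissible presentation. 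Hence $\lambda_F\sigma_F=1$, and the delicate point is exactly this use of preservation of the sieve presentation $\nabla_i(\top,f_i)$ by $\lambda_F$. An isomorphism of distributivity forms in $\DFrm(\C)$ then transports the distributivity, and $F$ is identified with the maximally distributive $\Sieve$.
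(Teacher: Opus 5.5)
Your proposal is correct and takes essentially the same route as the paper. Part (i) compares with the union formula for final lifts of sieves; part (ii) and the uniqueness in (iii) come from applying a morphism to a top-generated (respectively canonical) admissible presentation; and the final isomorphism comes from checking both composites. Your remark that the inclusion $\supseteq$ in tightness always holds, and your derivation of $\sigma_F\lambda_F=1_{\Sieve}$ by applying (ii) to $\Sieve$ itself, only spell out details that the paper compresses into one-line arguments.
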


\begin{proof}
Support sends top to the maximal sieve and meets to intersections.  For
\(g:Z\to X\),
\[
 h\in\supp[F]{g^*A}
 \quad\Longleftrightarrow\quad
 (gh)^*A=\top
 \quad\Longleftrightarrow\quad
 h\in g^*\supp[F]{A},
\]
so it also preserves inverse images.  On an admissible presentation,
the final lift of the image sink in \(\Sieve\) is
\(\bigcup_i f_i\supp[F]{A_i}\).  Equality with the support of the apex
is exactly tightness, proving (i).

If \(A=\nabla_i(\top,f_i)\) is tight and admissible, every
distributivity morphism \(u:F\to\Sieve\) must satisfy
\[
 u(A)=\bigcup_i\prin{f_i}=\supp[F]{A}.
\]
This proves uniqueness in (ii).  Conversely, applying a morphism out
of \(\Sieve\) to its canonical final presentation
\[
                 R=\nabla_{f\in R}(\top,f)
\]
forces \eqref{eq:lambda-formula}.  The conditions in (iii) say exactly
that the forced assignment is a distributivity morphism.  When both
directions exist, tight top-generation gives
\(\lambda_F\sigma_F=1_F\), and the canonical final presentation of every
sieve gives \(\sigma_F\lambda_F=1_{\Sieve}\).
\end{proof}

There is an analogous unary statement for the form of variations.  If
\(\C\) has pullbacks and \(F\) has a tight unary distributivity, the
support map factors through \(\Var\) exactly when every support sieve is
principal.  If every cluster is top-generated by one arrow, that factor
is unique.  Conversely, at most one distributivity morphism
\(\Var\to F\) exists, and it is forced by
\[
                         \prin f\longmapsto\nabla(\top,f).
\]
It exists precisely when these unary final lifts define a morphism and
preserve the unary distributivities.  Under both sets of hypotheses the
two maps are inverse.  This is the unary specialization of the proof of
Theorem~\ref{thm:universal-sieve-form}.

\subsection{The category obtained by forgetting distributivity}

Let \(\MFrm(\C)\) denote the category of forms over \(\C\) admitting
all finite initial lifts, with morphisms preserving those lifts.  The
trivial form is a zero object and the zero map is fibrewise constant
top.  Unlike the situation for selected final lifts, no zero completion
is needed.

\begin{theorem}\label{thm:underlying-kernels-cokernels}
The category \(\MFrm(\C)\) has all kernels and cokernels.  For
\(u:F\to G\), the kernel has fibres
\[
                    (\ker u)_X=\{A\mid u_X(A)=\top_X\}.
\]
The cokernel is the fibrewise quotient by
\begin{equation}\label{eq:underlying-cokernel-relation}
 B\equiv_u C
 \quad\Longleftrightarrow\quad
 B\wedge u(A)=C\wedge u(A)
 \text{ for some }A\in F_X.
\end{equation}
These congruences are stable under inverse image.
\end{theorem}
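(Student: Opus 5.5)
The plan is to verify the kernel and cokernel universal properties directly in \(\MFrm(\C)\). Here the zero object is the trivial form \(0\) with one cluster in each fibre. Zero morphisms are the fibrewise constant-top maps, because every morphism preserves top and the unique map \(0\to G\) must send the single cluster to \(\top\).

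\emph{Kernel.} Put \(K_X=\{A\mid u_X(A)=\top_X\}\). This set contains \(\top_X\). It is closed under binary meets because \(u_X\) preserves them, and under inverse images because \(u_Xf^*=f^*u_Y\) and \(f^*\top=\top\). So \(K\) is a sub-indexed meet-semilattice, the inclusion preserves finite initial lifts, and \(uk\) is zero. If \(h:H\to F\) has \(uh=0\), then every value of \(h\) lies in \(K\), so \(h\) factors set-theoretically through \(K\), uniquely because \(k\) is injective. The factor preserves top, meets and inverse images because these operations are computed in \(F\). This repeats the proof of Theorem~\ref{thm:kernels} with no distributivity to check.

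\emph{Cokernel.} First check that \eqref{eq:underlying-cokernel-relation} is an indexed meet congruence.
\begin{itemize}
\item Reflexivity holds with witness \(A=\top\), since \(u(\top)=\top\). Symmetry is clear.
\item For transitivity, take witnesses \(A,A'\) and use \(A\wedge A'\): then \(u(A\wedge A')=u(A)\wedge u(A')\), and meeting both equalities with the other factor gives the claim.
\item Compatibility with meets follows by meeting the defining equality with any \(D\).
\item Stability under inverse image follows because \(f^*\) preserves meets and \(f^*u_Y(A)=u_X(f^*A)\), so \(f^*A\) is a witness.
\end{itemize}
This last step uses naturality of \(u\) essentially. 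It is also the reason the relation is taken with witnesses from \(u(F_X)\) in the same fibre.

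With the congruence in hand, the fibrewise quotients \(G_X/{\equiv_u}\) inherit top, meets and inverse images. The quotient map \(q\) therefore preserves finite initial lifts. Moreover \(qu=0\): the witness \(A\) itself gives \(u(A)\wedge u(A)=\top\wedge u(A)\), so \(u(A)\equiv_u\top\).

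For the universal property, let \(r:G\to H\) satisfy \(ru=0\). If \(B\wedge u(A)=C\wedge u(A)\), then
\[
r(B)=r(B)\wedge r(u(A))=r(B\wedge u(A))=r(C\wedge u(A))=r(C).
\]
So \(r\) is constant on classes and factors uniquely through the surjection \(q\). The factor preserves meets, top and inverse images because \(q\) is surjective and commutes with them.

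The main obstacle is the congruence verification, specifically stability under inverse image. Once that is settled, everything else is the standard common-witness argument, as in the proofs of Theorems~\ref{thm:subgroup-nonarchimedean} and~\ref{thm:terminal-computations}(i).
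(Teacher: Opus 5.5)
Your proposal is correct and follows the paper's proof essentially step by step. The shared steps are:
\begin{enumerate}
\item the kernel is the top class, which is closed under top, meets and inverse images;
\item the common-witness relation is a congruence, with \(\top\) as witness for reflexivity, the meet of witnesses for transitivity, meeting with a further cluster for compatibility with meets, and \(f^*A\) as witness for stability under inverse image;
\item the universal property comes from applying \(r\) to a witnessing equality.
\end{enumerate}
Your explicit identification of the zero morphisms with the fibrewise constant-top maps, and your check that \(qu\) is zero, only spell out what the paper leaves implicit.
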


\begin{proof}
The kernel fibres contain top and are closed under meets and inverse
images; the universal property is immediate from their definition.
For the cokernel, reflexivity of \eqref{eq:underlying-cokernel-relation}
is witnessed by top and symmetry is immediate.  Two successive
witnesses may be replaced by their meet, proving transitivity.  Meeting
a witnessing equality with a further cluster proves compatibility with
meets.  If \(B\equiv_uC\) is witnessed by \(A\) and \(f:X\to Y\), then
\[
 f^*B\wedge u(f^*A)
 =f^*(B\wedge u(A))
 =f^*(C\wedge u(A))
 =f^*C\wedge u(f^*A),
\]
so inverse image descends.  The quotient kills \(u\).  If \(r:G\to H\)
kills \(u\), applying \(r\) to a witnessing equality gives
\(r(B)=r(C)\), and therefore the required unique factorization.
\end{proof}

The quotient has a useful fraction interpretation.  Put
\[
 N_u(X)=\{U\in G_X\mid u(A)\leqslant U
                    \text{ for some }A\in F_X\}.
\]
For \(U\in N_u(X)\), the quotient inverts every vertical arrow
\[
                         B\wedge U\longrightarrow B.
\]
An arrow from \([B]\) to \([C]\) above \(f:X\to Y\) is represented by
a roof
\[
 B\ \longleftarrow\ B\wedge U\ \longrightarrow\ C,
 \qquad U\in N_u(X),
\]
and exists exactly when \(B\wedge U\leqslant f^*C\).  Denominators
have common refinements by meets, and composition refines \(U,V\) to
\(U\wedge f^*V\).  The distributivity cokernel of
Theorem~\ref{thm:cokernels} is obtained by further closing this
fibrewise fraction relation under all admissible final presentations.

A full subform \(K\hookrightarrow F\) is normal in \(\MFrm(\C)\) when
its fibres are pullback-stable filters.  The normal monomorphisms and
normal epimorphisms are respectively the kernels and cokernels just
described.

\begin{theorem}\label{thm:underlying-homological}
Normal monomorphisms and normal epimorphisms in \(\MFrm(\C)\) are
closed under composition, and \(\MFrm(\C)\) satisfies the subquotient
axiom.  It is therefore a pointed homological category in the sense of
Grandis, and in particular a Grandis ex2-category
\cite{GrandisBook,JanelidzeWeighillII}.
\end{theorem}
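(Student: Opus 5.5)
The proof reduces everything to the explicit descriptions in Theorem~\ref{thm:underlying-kernels-cokernels}. I first record the characterizations. A normal monomorphism is, up to isomorphism, the inclusion of a full subform whose fibres are pullback-stable filters. Every such inclusion $K\hookrightarrow F$ is indeed the kernel of its cokernel: the relation \eqref{eq:underlying-cokernel-relation} with witnesses in $K$ has top class exactly $K$, since $B\wedge A=A$ with $A\in K$ forces $A\leqslant B$, so $B\in K$ by upward closure. A normal epimorphism is, up to isomorphism, the fibrewise surjective quotient of $G$ by the common-witness congruence of a pullback-stable filter $N$. Conversely, every fibrewise surjective morphism whose congruence has this common-witness form is the cokernel of its kernel.

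For composition of normal monomorphisms, let $L\hookrightarrow K\hookrightarrow F$ be given with $L_X$ a pullback-stable filter in $K_X$ and $K_X$ a pullback-stable filter in $F_X$. Then $L_X$ is upward closed in $F_X$: anything above a member of $L_X$ lies in $K_X$ and hence in $L_X$. It is also meet-closed and pullback-stable, so the composite is normal. This is exactly the point where the distributive setting failed in Theorem~\ref{thm:composition-normal-maps}, because here no join compatibility is required.

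For composition of normal epimorphisms, take $q\colon F\to F/\equiv_K$ and $r\colon F/\equiv_K\to (F/\equiv_K)/\equiv_M$. Set $N=\{A\mid q(A)\in M\}$. This is a pullback-stable filter, since $q$ is surjective and preserves meets and inverse images. I then check that $rq$ identifies $B$ and $C$ iff $B\wedge A=C\wedge A$ for some $A\in N$. For one direction, suppose $[B]\wedge[A']=[C]\wedge[A']$ with $[A']\in M$. Then there is a $K$-witness $k$ with $B\wedge A'\wedge k=C\wedge A'\wedge k$, and $A'\wedge k\in N$. The other direction is immediate. Hence $rq$ is the cokernel of $N\hookrightarrow F$.

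The subquotient axiom, in Grandis's form, requires that for a normal mono $m$ and a normal epi $q$, the composite $qm$ factors as a normal epi followed by a normal mono. Equivalently, the image of a normal subform under a normal quotient is normal, and the restriction of a normal quotient to a normal subform is a normal quotient. I expect this to be the main obstacle. Take $K\hookrightarrow F$ and the quotient by $\equiv_N$. The image $q(K)$ is upward closed: if $[B]\geqslant[A]$ with $A\in K$, then $B\wedge n\geqslant A\wedge n$ for some $n\in N$. I would like $B\in K$ up to equivalence, but this need not hold directly. So I would instead use $B'=B\vee\dots$, which is unavailable without joins. Instead I take $B'=B$ and argue that $[B]=[B\wedge?]$... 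The cleaner route is to note that $[B]\geqslant[A]$ gives $[B]=[B]\vee$-free: $[B]=[B\wedge n']$ is not enough. Since fibres lack joins, I would first try to show upward closure of $q(K)$ in the quotient by choosing the representative $B$ itself and proving $B\equiv_N B''$ with $B''\in K$. If that fails in general, I would fall back on Grandis's weaker form of the axiom, namely that normal subobjects and normal quotients are stable under each other's direct and inverse images. Inverse images are easy: $q^{-1}(q(K))$ is a filter, and $m^{-1}$ of a congruence is the common-witness congruence of $N\cap K$. These lattices would then be modular via their identification with pullback-stable filters.

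Finally, pointedness comes from the trivial zero object. Together with the existence of kernels and cokernels, closure under composition, and the subquotient axiom, this gives homologicality, and the ex2 conclusion follows by \cite{GrandisBook}.
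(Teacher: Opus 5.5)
Your arguments for composition of normal monomorphisms, for composition of normal epimorphisms (using \(N=q^{-1}(M)\) and the witness \(A'\wedge k\)), and for pointedness are correct and essentially match the paper.

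\textbf{The gap is the subquotient axiom.} You state it for an arbitrary normal mono \(m\) and normal epi \(q\). The axiom, as the paper uses it (following Grandis), only concerns pairs with \(\ker q\subseteq m\), that is, \(N\subseteq K\) in your notation. Without this hypothesis the statement is false, so the obstacle you ran into cannot be overcome.

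Here is a counterexample over \(\mathbf 1\). Take the six-element lattice with \(z<a<1\), \(z<c<b<1\) and \(c<n<1\). Let \(K=\{a,1\}\) and quotient by \(N=\{n,1\}\). The quotient is \(x\mapsto x\wedge n\), with classes \(\{a,z\}<\{b,c\}<\{n,1\}\). Then \(q(K)=\{[z],[1]\}\) is not upward closed. Normal epis are surjective and normal monos have filter images, so \(qm\) has no normal-epi--normal-mono factorization.

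Your fallback fails on the same example. The set \(q^{-1}(q(K))=\{z,a,n,1\}\) is not a filter, and the image of a normal subform under a normal quotient need not be normal. The modularity remark does not bear on the axiom at all.

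\textbf{The missing idea is simply to use the hypothesis \(N\subseteq K\).} Suppose \([A]\leqslant[B]\) with \(A\in K\). Then \(A\wedge n\leqslant B\) for some \(n\in N\). Since \(n\in K\), also \(A\wedge n\in K\), and upward closure gives \(B\in K\) itself. No change of representative is needed.

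Since \(q\) preserves top, meets and inverse images, \(q(K)\) is then a pullback-stable filter, so \(q(K)\hookrightarrow F/{\equiv_N}\) is a normal mono. Moreover, \(N\) is a pullback-stable filter in \(K\). The restriction of \(\equiv_N\) to \(K\) is exactly the common-witness congruence of \(N\) on \(K\), because the witnesses already lie in \(N\subseteq K\). Hence \(K\to q(K)\) is the cokernel of \(N\hookrightarrow K\). This is precisely the normal-epi--normal-mono factorization the paper gives.
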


\begin{proof}
A filter in a filter is a filter in the ambient meet-semilattice, so
normal monomorphisms compose.  Given successive quotient maps
\[
 F\xrightarrow{q_K}F/K\xrightarrow{q_L}(F/K)/L,
\]
the inverse image \(M=q_K^{-1}(L)\) is normal.  Lifting a witness in
\(L\) along the surjection \(q_K\), and then meeting it with a witness
for equality in the first quotient, gives
\((F/K)/L\cong F/M\).  Thus normal epimorphisms compose.

For the subquotient axiom, let \(B\hookrightarrow A\) be normal and let
\(q_K:A\to A/K\) be normal, with \(K\subseteq B\).  The image
\(q_K(B)\) is closed under top, meets, and inverse images.  It is upward
closed: if \(q_K(b)\leqslant q_K(a)\), then for some \(k\in K\) one has
\(b\wedge k\leqslant a\).  Since \(b,k\in B\), upward closure gives
\(a\in B\).  The restriction \(B\to q_K(B)\) is the quotient of
\(B\) by \(K\), while \(q_K(B)\hookrightarrow A/K\) is normal.  This
is the required normal-epi--normal-mono factorization.
\end{proof}

No analogous homological claim holds for
\((\DFrm(\C),\Null)\), or for its strong counterpart.  They are
semiexact by Theorem~\ref{thm:cokernels}, and their cokernels form the
left class of the orthogonal factorization system in Theorem
\ref{thm:cokernel-factorization-system}.  Nevertheless its right class
is strictly larger than the monomorphisms, cokernels are not stable
under pullback by Theorem~\ref{thm:cokernels-not-pullback-stable}, and
Theorem~\ref{thm:composition-normal-maps} shows that kernels need not
compose.  The failure on the monomorphic side is precisely where the
chosen distributivity differs from the underlying meet-form calculus.

The difference between the underlying and distributive cokernels is
already visible on a three-object category.  Let
\[
                     0\xrightarrow{e}1\xrightarrow{f}2
\]
be a chain, and let \(J\) be the singleton-generated topology in which
\(e\) covers but \(f\) does not.  The underlying cokernel in
\(\MFrm(\C)\) identifies \(\prin e\) with \(\top_1\), but leaves
\(\prin{fe}\) distinct from \(\prin f\) over \(2\).  It cannot preserve
the two admissible presentations
\[
 \prin f=\nabla(\top_1,f),
 \qquad
 \prin{fe}=\nabla(\prin e,f).
\]
The distributivity cokernel propagates the first identification through
this final lift and therefore identifies the two outputs.  Equivalently,
\(f^*\prin{fe}=\prin e\) covers, so the two principal sieves have the
same local saturation.  This small example isolates why the cokernel in
Theorem~\ref{thm:main} must remember admissible presentations.

\subsection{Further extensional presentations}

Several standard Grothendieck topologies give especially transparent
quotients.

\begin{example}[Open covers]
Let \(X\) be a topological space and take
\(\C=\mathcal O(X)\), ordered by inclusion.  A sieve \(S\) on an open
set \(U\) covers when the union of its members is \(U\), and
\[
                  j(S)=\prin{\bigcup S\hookrightarrow U}.
\]
Thus \(\Sat_J\) is isomorphic, as an underlying form, to the form of
variations, but it carries the full distributivity of structured unions
of opens rather than only the unary one.  A representative is
\[
 J_{\mathrm{open}}\longrightarrow\Sieve
 \xrightarrow{\,S\mapsto\prin{\bigcup S\hookrightarrow U}\,}
 \Var_{\mathrm{open}}.
\]
The quotient forgets the particular sieve and remembers its union.
\end{example}

\begin{example}[Nested topologies]
If \(J\subseteq K\) are Grothendieck topologies on \(\C\), restriction
of \(J\)-saturation gives
\[
 J\longrightarrow K
 \longrightarrow\{S\in K\mid j_J(S)=S\}.
\]
The right-hand form carries the quotient distributivity.  The local
presentations used in the proof of Theorem~\ref{thm:main} remain inside
\(K\), so the same cokernel argument applies.  On a suitably small
category of schemes, inclusions from the Zariski topology to the
\'{e}tale topology and then to the fppf topology give familiar
instances.
\end{example}

\begin{example}[The atomic topology]
Suppose that \(\C\) satisfies the right Ore condition.  Its nonempty
sieves form the atomic topology.  The saturation of the empty sieve is
empty, whereas every nonempty sieve saturates to the maximal sieve.
Hence the associated quotient has two clusters in each fibre and the
extensional presentation is
\[
 J_{\mathrm{at}}\longrightarrow\Sieve
 \longrightarrow\mathbf2,
\]
where \(\mathbf2\) carries the quotient distributivity.
\end{example}

\begin{example}[Stable factorization systems]
Suppose that \(\C\) has pullbacks and a proper factorization system
\((\mathcal E,\mathcal M)\), with \(\mathcal E\) stable under
pullback.  The \(\mathcal E\)-arrows form a saturated singleton
coverage.  Indeed, identities, pullback stability, and composition are
standard.  If \(fg\in\mathcal E\) and \(f=me\) is its factorization,
orthogonality makes \(m\) a split epimorphism; properness also makes it
a monomorphism, hence an isomorphism, so \(f\in\mathcal E\).  Theorem
\ref{thm:singleton-topologies} gives
\[
 \Var_{\mathcal E}\longrightarrow\Var
 \longrightarrow\operatorname{Sub}_{\mathcal M}(\C)
 .
\]
If \(f=me\), its quotient class is represented by \(\prin m\).  For a
regular category this is the sequence from variations generated by
regular epimorphisms to ordinary subobjects, and the quotient records
the image of an arrow.
\end{example}

\subsection{Semilattice sites, partial frames, and points}

The terminal-base examples of Sections~\ref{sec:distributivity-forms}
and \ref{sec:applications} have a direct site-theoretic interpretation.
Regard a meet-semilattice \(L\) as a thin category, with an arrow
\(a\to u\) when \(a\leqslant u\), and declare
\((a_i\to u)_i\) to cover when \(u=\bigvee_i a_i\) is an admissible
presentation.

\begin{lemma}\label{lem:semilattice-sites}
Conditions (D1), (D3), and (D2) are respectively the identity,
pullback, and transitivity axioms for a family-based Grothendieck
coverage on the thin category \(L\).  The resulting coverage is
subcanonical.  Conversely, every subcanonical family-based coverage on
\(L\) determines a distributivity by taking its covering families as
exact join presentations.
\end{lemma}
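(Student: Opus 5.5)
The plan is to translate the three distributivity axioms and the three coverage axioms one at a time. On the thin category \(L\), an arrow \(a\to u\) exists exactly when \(a\leqslant u\). The pullback of \(a\to u\) along \(c\to u\) is the meet \(a\wedge c\to c\), so a covering family \((a_i\to u)_i\) pulls back along \(c\leqslant u\) to \((a_i\wedge c\to c)_i\).

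First I would match the three pairs of axioms.
\begin{enumerate}
\item Condition (D1) says the unit presentation \(u=\bigvee\{u\}\) is admissible; this is the identity-cover axiom.
\item Condition (D3), applied with \(C=c\leqslant u\) (all indexing arrows being identities over \(\mathbf1\)), says that \(c=u\wedge c=\bigvee_i(a_i\wedge c)\) is admissible. This is exactly pullback stability of covers. For general \(c\) one first replaces \(c\) by \(c\wedge u\); since the thin category has meets, that suffices.
\item Condition (D2) says that if \(u=\bigvee_ia_i\) and each \(a_i=\bigvee_jb_{ij}\) are admissible, then so is \(u=\bigvee_{i,j}b_{ij}\). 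This is the composition (transitivity) axiom for families.
\end{enumerate}
Bijective reindexing in (D1) is harmless, since coverages are taken up to reindexing of families. I would remark here that (D4) is vacuous over the terminal base, as noted earlier.

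Next, subcanonicity. A representable presheaf on the thin category is \(\operatorname{Hom}(-,v)\), which is a singleton when \(x\leqslant v\) and empty otherwise. A matching family for the cover \((a_i\to u)_i\) exists exactly when every \(a_i\leqslant v\). The sheaf condition then asks that this happen if and only if \(u\leqslant v\), and that the amalgamation be unique; uniqueness is automatic because all hom-sets have at most one element. So the sheaf condition says precisely that \(u\) is the join of the \(a_i\), which holds because each selected presentation is a final lift. This is the terminal-base case of the canonical-cover criterion in Theorem~\ref{thm:strong-total-site}.

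Finally, the converse. Given a subcanonical family-based coverage, the sheaf argument above run in reverse shows that every covering family \((a_i\to u)\) presents \(u\) as a join \(\bigvee_ia_i\). Pullback stability gives, for every \(c\), a cover \((a_i\wedge c\wedge u\to c\wedge u)\). By subcanonicity this is again a join, which is exactly exactness \eqref{eq:exact-join}. The identity and transitivity axioms then return (D1) and (D2), and pullback returns (D3).

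The one delicate point I expect is the coverage convention. If "coverage" means the weaker Johnstone-style axiom, where pullbacks of covers only need to be refined by covers, then the pulled-back family \((a_i\wedge c)\) need not itself be selected. In that case the translation yields (D3) only after closing under the rules of Lemma~\ref{lem:generated-distributivity}. I would handle this by reading "coverage" in the pretopology sense, with families stable under pullback, which matches the statement's "respectively" correspondence.
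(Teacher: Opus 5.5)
Your proposal is correct and follows the paper's proof. It uses the same three matchings: (D1) with the identity axiom, (D3) with pullback stability via the meet $a_i\wedge c$, and (D2) with transitivity. It uses the same representable-presheaf computation, showing that descent for $(a_i\to u)_i$ says exactly $u=\bigvee_i a_i$, and it gets the converse the same way, from pullback stability plus subcanonicity. Your additional care is consistent with the paper's implicit pretopology reading: you reduce a general $c$ to $c\wedge u$, and you note that the converse needs pullback-stable rather than merely refinement-stable covers.
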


\begin{proof}
Identity covers are (D1).  Pullback in a meet-semilattice is meet, so
stability of a cover \(u=\bigvee_i a_i\) along \(c\leqslant u\) is
precisely
\[
                         c=\bigvee_i(c\wedge a_i),
\]
which is (D3).  Substitution of covering families is exactly (D2).
A representable presheaf on a poset satisfies descent for
\((a_i\to u)_i\) exactly when \(u\) is the join of the \(a_i\), so the
coverage is subcanonical.  The converse reads these implications in
reverse; pullback stability makes the resulting joins exact.
\end{proof}

Stubbe proves that all distributive joins give the canonical, or
finest subcanonical, Grothendieck topology on a meet-semilattice
\cite{Stubbe}.  Lemma~\ref{lem:maximal-exact-distributivity} is the
family-based version of that result.  Ball and Pultr's exact join sites
\cite{BallPultr} are especially close to our terminal-base objects.
Zenk's partial frames \cite{Zenk}, and the work on their congruences in
\cite{FrithSchauerteMadden,SchauerteFrith}, use a selector fixed across
a category of semilattices; here the distributivity is selected
object-by-object and is required from the outset to be closed under
substitution and meet restriction.  Related selected-join approaches
include \(Z\)-continuous posets \cite{BandeltErne}, \(\kappa\)-frames
\cite{Madden}, and distributive envelopes
\cite{GehrkeVanGool}.

A proper filter \(D\subseteq L\) is \emph{\(\mathcal P\)-prime} when
\[
 u=\bigvee_i a_i\text{ admissible and }u\in D
 \quad\Longrightarrow\quad
 a_i\in D\text{ for some }i.
\]

\begin{lemma}\label{lem:prime-points}
The characteristic map
\[
 \chi_D:L\longrightarrow\mathbf2,
 \qquad
 \chi_D(x)=1\Longleftrightarrow x\in D,
\]
to the maximally distributive two-element frame is a distributivity
morphism if and only if \(D\) is \(\mathcal P\)-prime.  Every
\(\mathcal P\)-prime filter is therefore the kernel of a morphism and
hence the kernel term of an extensional presentation of \(L\).
\end{lemma}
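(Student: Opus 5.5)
The plan is to check the three parts of Definition \ref{def:distributivity-morphism} for \(\chi_D\), and then to read off the kernel assertion from Theorems \ref{thm:kernels} and \ref{thm:cokernels}. We first fix the target. The maximally distributive two-element frame \(\mathbf2\) admits every join presentation, since every join in \(\mathbf2\) is exact (Lemma \ref{lem:maximal-exact-distributivity}). Over the terminal base there are no inverse images to check, so \(\chi_D\) is a morphism exactly when two things hold. First, it preserves top and binary meets. Second, for every admissible presentation \(u=\bigvee_i a_i\) of \(L\) we have \(\chi_D(u)=\bigvee_i\chi_D(a_i)\) in \(\mathbf2\); admissibility of the image presentation is automatic.

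The meet part is routine. \(\chi_D(1)=1\) because \(D\) is a filter, so it contains top. Also \(\chi_D(x\wedge y)=\chi_D(x)\wedge\chi_D(y)\), because a filter contains \(x\wedge y\) exactly when it contains both \(x\) and \(y\) (upward closure and meet closure). For the join part, monotonicity gives \(\chi_D(a_i)\leqslant\chi_D(u)\) for each \(i\), so \(\bigvee_i\chi_D(a_i)\leqslant\chi_D(u)\) always holds. Equality can therefore fail only when \(u\in D\) while no \(a_i\in D\). So join preservation is precisely the \(\mathcal P\)-primeness implication, read for all admissible presentations. This includes the empty presentation: when the empty join \(\bot=\bigvee\varnothing\) is admissible, primeness forbids \(\bot\in D\), matching the requirement \(\chi_D(\bot)=0\).

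Properness needs separate attention, and this is the one point requiring care. As defined, \(\mathcal P\)-primeness includes properness of \(D\), but morphism-ness alone does not obviously force it. For example, if only unit presentations are admissible, then the improper filter \(D=L\) still gives a morphism (constant at \(1\)). I would therefore read the equivalence with ``proper'' understood as part of the standing hypothesis on \(D\). Equivalently, I would note that \(\chi_D\) being a morphism is equivalent to the primeness implication, and that properness is needed only to make \(\chi_D\) non-null. I expect this bookkeeping to be the main obstacle; the rest is immediate.

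Finally, \(D=\ker\chi_D\) by \eqref{eq:kernel-fibres}, because the top class of \(\chi_D\) is exactly \(D\). By Theorem \ref{thm:kernels}, the inclusion \(D\hookrightarrow L\) with its induced distributivity is therefore a relative kernel. By Lemma \ref{lem:kernel-criterion}, or directly Theorem \ref{thm:terminal-filter-topologies} (ii)\(\Rightarrow\)(i), it has a cokernel \(L\to L/\theta_D\) and forms the kernel term of the short exact sequence \eqref{eq:terminal-filter-sequence}, which is an extensional presentation of \(L\).
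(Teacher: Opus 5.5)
Your proposal is correct and follows the paper's own argument. Preservation of top and meets is exactly the filter property, monotonicity reduces join preservation to the $\mathcal P$-primeness implication, and $D=\ker\chi_D$ is then fed into Lemma~\ref{lem:kernel-criterion}. Your properness caveat is also right: if $D=L$ and no empty join is admissible, then $\chi_D$ is a (null) morphism even though $L$ is not $\mathcal P$-prime, so the ``only if'' direction needs properness as a standing assumption, a point the paper's proof passes over.
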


\begin{proof}
The characteristic map preserves top and binary meets exactly when
\(D\) is a filter.  Since every input of a join lies below its apex,
preservation of an admissible join is equivalent to the displayed
primeness condition.  Its kernel is \(D\), which is therefore normal
by Lemma~\ref{lem:kernel-criterion}.
\end{proof}

For the unit distributivity these are all proper filters; for finite
joins they are the ordinary prime filters; for countable joins they are
countably completely prime filters; and for a frame they are completely
prime filters, hence locale points.  The two-element map need not be the
cokernel of its kernel.  On the three-element chain, for instance,
\(\{1\}\) is prime but its canonical cokernel is the identity of the
chain rather than the characteristic map.

The literal form of sieves over \(\mathbf1\) is only \(\mathbf2\).
Accordingly, Theorem~\ref{thm:main} recovers the two Grothendieck
topologies on the terminal category, while the many filters above give
extensional presentations of other middle distributivity forms.  This
distinction is one of the principal benefits of the relative
definition.

\subsection{General one-object bases}

A one-object category is the delooping \(\mathbf BM\) of a monoid
\(M\).  A form over \(\mathbf BM\) admitting finite initial lifts is a
meet-semilattice \(L\) with top and a contravariant \(M\)-action by
finite-meet homomorphisms.  An admissible presentation has the form
\[
                         u=\nabla_i(a_i,m_i),
 \qquad a_i\leqslant m_i^*u,
\]
with \(u\) least subject to these inequalities.  An extensional
presentation of such a form has an \(M\)-stable filter as its kernel,
with the additional
normality imposed by the labelled presentations.  For the unit
distributivity every \(M\)-stable filter is normal, since
\eqref{eq:filter-localization} is stable under the action: a witness
\(d\) is carried to the witness \(m^*d\).

The sieve form over \(\mathbf BM\) is the lattice of right ideals of
\(M\), using the convention that multiplication records composition,
and
\[
                         m^*R=\{n\in M\mid mn\in R\}.
\]
Theorem~\ref{thm:main} identifies its extensional presentations with the
ordinary Grothendieck topologies on \(\mathbf BM\).  Its variations are the
principal right ideals.  They form a distributivity form with the unary
calculus only when the pullback hypothesis needed in
Theorem~\ref{thm:singleton-topologies} is available; the sieve theorem
requires no such hypothesis.  Thus nontrivial one-object bases give an
equivariant refinement of the semilattice theory rather than merely
another copy of the terminal-base case.

\subsection{Towards relative sheaf theory}

An ordinary Grothendieck topology is useful not only because it singles
out covering sieves, but because those sieves determine a category of
sheaves.  The preceding results therefore suggest a further test for the
notion of an extensional presentation: it should support a corresponding
relative sheaf theory.  We give here a preliminary construction.  Its
decisive feature is that it recovers both ordinary presheaves and ordinary
sheaves when the middle term is the form of sieves.

Let \(F\) be a distributivity form with total category
\(p:\mathcal E\to\C\), and write
\(y:\mathcal E\to[\mathcal E^{\mathrm{op}},\mathbf{Set}]\) for the
Yoneda embedding.  An admissible presentation
\[
                         \pi:\quad
                         U=\nabla_i(A_i,f_i)
\]
determines total arrows \(u_i:A_i\to U\).  Let
\(R_\pi\hookrightarrow yU\) be the sieve generated by this family.

\begin{definition}\label{def:relative-presheaf}
An \emph{\(F\)-presheaf} is a presheaf
\(P:\mathcal E^{\mathrm{op}}\to\mathbf{Set}\) for which the canonical
map
\[
 P(U)\cong\operatorname{Nat}(yU,P)
       \longrightarrow\operatorname{Nat}(R_\pi,P)
\]
is a bijection for every admissible presentation \(\pi\).  We denote the
full category of such presheaves by \(\operatorname{PSh}(F)\).
\end{definition}

Thus an \(F\)-presheaf has unique gluing for the presentations built into
the distributivity form.  Using the generated sieve, rather than a
pairwise equalizer formula, makes the definition meaningful without
assuming that the relevant pullbacks exist.  The word ``presheaf'' records
that this descent belongs to the ambient form; an extensional presentation
will impose the additional sheaf condition.

Now let
\[
 \mathbb X:\qquad K\longrightarrow F\xlongrightarrow{q}Q
\]
be an extensional presentation.  Let \(W_{\mathbb X}\) consist of the
vertical arrows
\[
                  w_{A,B}:A\longrightarrow B,
 \qquad A\leqslant B,\qquad q_X(A)=q_X(B).
\]
These are precisely the vertical inclusions made locally invisible by the
quotient.  In particular, since \(K=\ker q\), a cluster \(A\in F_X\)
belongs to \(K_X\) exactly when
\(A\to\top_X\) belongs to \(W_{\mathbb X}\).

\begin{definition}\label{def:relative-sheaf}
A \emph{sheaf relative to \(\mathbb X\)}, or an
\emph{\(\mathbb X\)-sheaf}, is an \(F\)-presheaf \(P\) such that
\[
                         P(B)\longrightarrow P(A)
\]
is a bijection for every \(w_{A,B}\in W_{\mathbb X}\).  The resulting
full category is denoted \(\operatorname{Sh}_{\mathbb X}(F)\).
\end{definition}

This definition depends only on the isomorphism class of the extensional
presentation fixing \(F\).  It could equivalently be expressed by saying
that \(P\) is orthogonal, in the presheaf category, to the inclusions
\(R_\pi\hookrightarrow yU\) and to the maps
\(yA\to yB\) induced by the arrows in \(W_{\mathbb X}\).

\begin{lemma}\label{lem:relative-sheaf-categories}
Suppose that the total category \(\mathcal E\) is small.  Then the
categories \(\operatorname{PSh}(F)\) and
\(\operatorname{Sh}_{\mathbb X}(F)\) are reflective locally presentable
categories.  Suppose in addition that \(\C\) has pullbacks and \(F\) is
strong.  Then \(\operatorname{PSh}(F)\) is the Grothendieck topos of
sheaves for the topology on \(\mathcal E\) generated by the admissible
presentations.  Moreover, \(\operatorname{Sh}_{\mathbb X}(F)\) is the
subtopos obtained by adjoining the principal sieves generated by
\(W_{\mathbb X}\) as covering sieves.
\end{lemma}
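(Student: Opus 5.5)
Both categories will be realised as small-orthogonality classes in the presheaf category \([\mathcal E^{\mathrm{op}},\mathbf{Set}]\). This category is locally finitely presentable because \(\mathcal E\) is small. By Definition~\ref{def:relative-presheaf}, \(\operatorname{PSh}(F)\) is the full subcategory of presheaves orthogonal to the monomorphisms \(R_\pi\hookrightarrow yU\), with \(\pi\) ranging over admissible presentations. Definition~\ref{def:relative-sheaf} adds orthogonality to the maps \(yw:yA\to yB\) for \(w\in W_{\mathbb X}\). We must check that these are sets of morphisms, not proper classes. Presentations are small and \(\mathcal E\) is small, so each \(R_\pi\) is a subobject of some \(yU\), and these form a set. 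Different presentations generating the same sieve impose the same condition, so we may index by the set of sieves that arise. The standard theorem on orthogonality classes in locally presentable categories (Adámek--Rosický, Theorem~1.39 together with its reflectivity companion) then shows that both full subcategories are reflective and locally presentable. For \(\operatorname{Sh}_{\mathbb X}(F)\) we use the union of the two sets of maps.

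For the topos statements, assume \(\C\) has pullbacks and \(F\) is strong. By Theorem~\ref{thm:strong-total-site}, the families \((A_i\to U)\) form a Grothendieck pretopology on \(\mathcal E\). Let \(T\) be the generated topology. A standard fact is that a presheaf is a \(T\)-sheaf if and only if it satisfies the sheaf condition for the covering families of a generating pretopology, that is, if and only if it is orthogonal to the generated sieves \(R_\pi\hookrightarrow yU\). Hence \(\operatorname{PSh}(F)=\operatorname{Sh}(\mathcal E,T)\), which is a Grothendieck topos.

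For \(\operatorname{Sh}_{\mathbb X}(F)\), note first that for a single arrow \(w:A\to B\) in \(\mathcal E\), the principal sieve \(\langle w\rangle\) on \(B\) is the image of \(yw\). Since \(\mathcal E\) is thin over \(\C\) (faithfulness) and \(w\) is vertical, \(w\) is monic in \(\mathcal E\). Vertical arrows over an identity are monic because \(p\) is faithful: if \(wg=wh\), then \(p(g)=p(h)\), so \(g=h\). Therefore \(yw\) is a monomorphism and \(\langle w\rangle\cong yA\). Orthogonality to \(yw\) is thus the sheaf condition for the sieve \(\langle w\rangle\). Hence \(\operatorname{Sh}_{\mathbb X}(F)\) consists of the presheaves that are sheaves for every sieve in \(T\) and for all sieves \(\langle w\rangle\), \(w\in W_{\mathbb X}\). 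The class of sieves for which a given family of presheaves satisfies the sheaf condition forms a Grothendieck topology: this is the standard ``largest topology for which given presheaves are sheaves'' argument. It follows that these presheaves are exactly the sheaves for the topology \(T'\) generated by \(T\) together with the sieves \(\langle w\rangle\). So \(\operatorname{Sh}_{\mathbb X}(F)=\operatorname{Sh}(\mathcal E,T')\) is a subtopos of \(\operatorname{PSh}(F)\), as claimed.

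The main obstacle I expect is the size and indexing bookkeeping in the first step: ensuring that the admissible presentations give only a set of distinct sieves, and identifying the descent condition of Definition~\ref{def:relative-presheaf} with the sheaf condition for the generated sieve rather than for the family. Both points reduce to the observation that sheaf conditions depend only on the sieve generated, and that pretopologies and their generated topologies share their sheaves. I would state this explicitly rather than re-prove it.
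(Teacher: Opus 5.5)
Your first two steps agree with the paper: small orthogonality classes in $[\mathcal E^{\mathrm{op}},\mathbf{Set}]$, and the identification of $\operatorname{PSh}(F)$ with sheaves for the topology generated by the pretopology of Theorem~\ref{thm:strong-total-site}. Your remark that vertical arrows are monic is correct and useful. It means orthogonality to $yw$ is exactly the sheaf condition for $\langle w\rangle$, a point the paper leaves implicit.

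\textbf{The gap is in the last step.} You assert that the sieves for which every member of a class $\mathcal H$ of presheaves satisfies the sheaf condition form a Grothendieck topology. This is false, because that collection of sieves is not pullback-stable. For instance, the empty sieve on $B$ passes the test whenever $P(B)$ is a singleton, but its pullback along $g\colon C\to B$ requires $P(C)$ to be a singleton as well.

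The correct ``largest topology'' $J_{\mathcal H}$ consists of the sieves \emph{all of whose pullbacks} satisfy the sheaf condition for members of $\mathcal H$. So to place $\langle w\rangle$ in $J_{\mathcal H}$, you need every $g^*\langle w\rangle$ to be a sieve for which the members of $\operatorname{Sh}_{\mathbb X}(F)$ are sheaves. This is not automatic, even for principal sieves of monomorphisms. Orthogonality to $yw$ constrains only $P(B)\to P(A)$, whereas the topology generated by $\langle w\rangle$ also constrains the restriction along the pullback of $w$ for every $C\to B$. In general, orthogonality to a non-pullback-stable set of monomorphisms gives a reflective subcategory, not a subtopos. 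As written, your argument would prove the ``moreover'' clause for an arbitrary set of vertical arrows, which is false.

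\textbf{The missing idea is what the paper actually verifies: $W_{\mathbb X}$ is stable under pullback in $\mathcal E$.}
\begin{itemize}
\item Take $w_{A,B}$ over $X$ and an arrow $g\colon C\to B$ over $f\colon Y\to X$. The pullback exists in $\mathcal E$ and is the vertical arrow $C\wedge f^*A\to C$, so
\[
g^*\langle w_{A,B}\rangle=\langle w_{C\wedge f^*A,\,C}\rangle .
\]
\item Since $q$ preserves finite meets and inverse images, and $C\leqslant f^*B$,
\[
q(C\wedge f^*A)=q(C)\wedge f^*q(A)=q(C)\wedge f^*q(B)=q(C).
\]
Hence the pulled-back arrow lies again in $W_{\mathbb X}$.
\end{itemize}
This is the only point in the lemma where the quotient $q$ of the extensional presentation is used, so it cannot be skipped. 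Once you insert it, your largest-topology argument goes through. The paper instead concludes directly that the admissible families, together with the pullback-stable singleton families $\{w\}$ for $w\in W_{\mathbb X}$, generate a topology whose sheaf condition is the second orthogonality condition.
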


\begin{proof}
Because \(\mathcal E\) is small, each of the two definitions is an
orthogonality class determined by a set of maps in the locally presentable
category \([\mathcal E^{\mathrm{op}},\mathbf{Set}]\).  Small
orthogonality classes are reflective and locally presentable
\cite{AdamekRosicky}.

If \(F\) is strong, Theorem~\ref{thm:strong-total-site} says that its
admissible families form a Grothendieck pretopology on \(\mathcal E\).
The first orthogonality condition is therefore exactly the usual sheaf
condition for the topology it generates.  The class
\(W_{\mathbb X}\) contains identities and is closed under composition.
It is also stable under pullback in \(\mathcal E\).  Indeed, the pullback
of \(A\to B\) along an arrow \(C\to B\) above \(f:Y\to X\) is
\[
                       C\wedge f^*A\longrightarrow C.
\]
Since \(C\leqslant f^*B\) and \(q\) preserves inverse images and finite
meets,
\[
 q(C\wedge f^*A)
   =q(C)\wedge f^*q(A)
   =q(C)\wedge f^*q(B)
   =q(C).
\]
Thus the pulled-back arrow again lies in \(W_{\mathbb X}\).  The
admissible families together with these singleton families consequently
generate a Grothendieck topology, and the second orthogonality condition
is its sheaf condition.  The inclusion of the resulting sheaf category
into \(\operatorname{PSh}(F)\) is therefore a subtopos inclusion.
\end{proof}

For a distributivity which is not strong, the first conclusion of the
lemma is the appropriate general statement: the relative presheaf
category need not be a topos, since the admissible presentations need not
be stable under cartesian base change.  One may always pass instead to the
least Grothendieck topology containing the sieves \(R_\pi\).  This gives a
topos, but it is a completion of the original distributivity and may
impose descent for presentations not selected in \(F\).

The principal example shows that the terminology has the intended
meaning.

\begin{theorem}[Sieve comparison]\label{thm:relative-sieve-sheaves}
For the maximally distributive form of sieves on \(\C\), there is a
natural equivalence
\[
             \operatorname{PSh}(\Sieve)\simeq
             [\C^{\mathrm{op}},\mathbf{Set}].
\]
If \(J\) is a Grothendieck topology and
\[
             \mathbb X_J:\qquad
             J\longrightarrow\Sieve\longrightarrow\Sat_J
\]
is its extensional presentation from Theorem~\ref{thm:main}, this
equivalence restricts to
\[
             \operatorname{Sh}_{\mathbb X_J}(\Sieve)\simeq
             \operatorname{Sh}(\C,J).
\]
\end{theorem}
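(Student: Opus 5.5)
The total category \(\mathcal E\) of \(\Sieve\) has objects pairs \((X,S)\) with \(S\) a sieve on \(X\), and an arrow \((Y,T)\to(X,S)\) above \(f:Y\to X\) exactly when \(T\subseteq f^*S\). The plan is to compare \(\operatorname{PSh}(\Sieve)\) with presheaves on \(\C\) through the full embedding \(\iota:\C\to\mathcal E\), \(X\mapsto(X,\top_X)\). This is fully faithful because \(\operatorname{Hom}_{\mathcal E}(A,\top_Y)\cong\operatorname{Hom}_{\C}(pA,Y)\), as noted in the proof of Theorem~\ref{thm:strong-total-site}.

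The candidate equivalence is restriction \(\iota^*:\operatorname{PSh}(\Sieve)\to[\C^{\mathrm{op}},\mathbf{Set}]\). Its pseudo-inverse sends a presheaf \(P\) on \(\C\) to the presheaf \(\widehat P(X,S)=\operatorname{Nat}(S,P)\) on \(\mathcal E\), where the sieve \(S\) is regarded as a subfunctor of \(yX\). Functoriality of \(\widehat P\) along \(T\subseteq f^*S\) comes from the map \(T\to S\), \(g\mapsto fg\).

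\begin{enumerate}
\item \(\widehat P\) is an \(\Sieve\)-presheaf. Take an admissible presentation \(U=\bigcup_i f_iS_i\). Using \(\widehat P(U)=\operatorname{Nat}(U,P)\), the claim reduces to the statement that \(U\) is the colimit in \([\C^{\mathrm{op}},\mathbf{Set}]\) of the diagram of sieves \(T\) lying in \(\mathcal E\) over \(U\) through the generating family. This should follow because a matching family on \(R_\pi\) restricts to compatible elements on the principal pieces \(\prin{f_ig}\), \(g\in S_i\), which cover \(U\) set-theoretically.
\item \(\iota^*\widehat P\cong P\). This is Yoneda: \(\operatorname{Nat}(yX,P)\cong P(X)\).
\item \(\widehat{\iota^*Q}\cong Q\) for an \(\Sieve\)-presheaf \(Q\). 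Apply the defining bijection to the canonical top-generated presentation \(S=\nabla_{f\in S}(\top_{\operatorname{dom}f},f)\). It identifies \(Q(X,S)\) with matching families on the generated sieve \(R_\pi\), whose generators are the objects \(\iota(\operatorname{dom}f)\). I then need to check that such matching families are exactly the elements of \(\operatorname{Nat}(S,\iota^*Q)\). The compatibility conditions in \(R_\pi\) range over arrows into top clusters, and these correspond to arrows of \(\C\) by full faithfulness.
\end{enumerate}

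For the sheaf part, the plan is to show that \(Q\cong\widehat P\) is an \(\mathbb X_J\)-sheaf exactly when \(P\) is a \(J\)-sheaf. Suppose \(P\) is a \(J\)-sheaf. For \(w_{A,B}\) with \(A\subseteq B\) and \(j_J(A)=j_J(B)\), the inclusion \(A\hookrightarrow B\) of subfunctors of \(yX\) is \(J\)-dense, since \(f^*A\) covers for every \(f\in B\). Hence \(\operatorname{Nat}(B,P)\to\operatorname{Nat}(A,P)\) is bijective by the standard fact that sheaves are orthogonal to dense monomorphisms. Conversely, specialize to \(A\in J(X)\) and \(B=\top_X\). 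Then the condition says \(P(X)\cong\operatorname{Nat}(A,P)\) for every covering sieve, which is precisely the sheaf condition. Both directions are natural in \(P\), so the equivalence restricts.

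I expect step 3 to be the main obstacle. It requires keeping track of the generated sieve \(R_\pi\) in \(\mathcal E\), which contains arrows from non-top clusters, and showing that matching families on it are determined by their values at top clusters. I would handle this by noting that every object \((Z,T)\) with an arrow into \(U\) through some \(u_f\) admits the arrow \((Z,T)\to(Z,\top)\). Naturality then forces its value to come from \(\top_Z\), reducing everything to the subcategory of top clusters. The case of an empty sieve also needs a check: there the presentation is the empty join, so \(Q(X,\varnothing)\) is a singleton, which agrees with \(\operatorname{Nat}(\varnothing,P)\).
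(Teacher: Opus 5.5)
Your proposal is correct. Its ingredients are the paper's: the top-cluster section \(\iota\), the inverse \((X,S)\mapsto\operatorname{Nat}(S,P)\), and, for the sheaf part, exactly the paper's argument (each \(w_{A,B}\) is a \(J\)-dense inclusion of sieves, and \(A\to\top_X\) with \(A\in J(X)\) gives back the ordinary sheaf condition). The difference is in the first equivalence. The paper quotes the Comparison Lemma: every cluster is covered by top clusters through its canonical presentation, and the induced topology on \(\C\) is trivial. You instead check by hand that \(\iota^*\) and \(\widehat{(-)}\) are mutually inverse. The paper's route is shorter, but it tacitly needs \(\operatorname{PSh}(\Sieve)\) to be the sheaf category of a Grothendieck topology on \(\mathcal E\). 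Lemma~\ref{lem:relative-sheaf-categories} provides this only when \(\C\) has pullbacks. It does hold here in general, since pulling \(R_\pi\) back along any total arrow yields a sieve of the form \(R_{\pi'}\), but that needs an argument. Your direct verification uses no such input and works for every small \(\C\). The steps you hedged close cleanly. In Step~1, surjectivity of \(\operatorname{colim}T\to U\) is \(U=\bigcup_if_iS_i\). Injectivity is faithfulness of \(p\): every element over \(m\in U(W)\) is identified with \(1_W\) at the unique arrow \((W,\top_W)\to(X,U)\) above \(m\). In Step~3, an arrow \((Z,T)\to(X,S)\) above \(h\) lies in \(R_{\pi_S}\) exactly when \(h\in S\), whatever \(T\) is, so \(R_{\pi_S}\cong S\circ p^{\mathrm{op}}\). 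Since \(p\dashv\iota\), restriction along \(p\) is left adjoint to \(\iota^*\), whence \(Q(X,S)\cong\operatorname{Nat}(S\circ p^{\mathrm{op}},Q)\cong\operatorname{Nat}(S,\iota^*Q)\), the empty sieve included.
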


\begin{proof}
Let \(\mathcal E_{\Sieve}\) be the total category of \(\Sieve\), and
consider the fully faithful top-cluster section
\[
 t:\C\longrightarrow\mathcal E_{\Sieve},
 \qquad X\longmapsto\top_X.
\]
Every sieve \(S\) on \(X\) has its canonical admissible presentation
\[
             S=\nabla_{f\in S}
               (\top_{\operatorname{dom}f},f).
\]
Consequently every object of \(\mathcal E_{\Sieve}\) is covered by
objects in the image of \(t\).  The topology induced on \(\C\) is the
trivial topology: a family of top clusters presents \(\top_X\) exactly
when its arrows generate the maximal sieve.  The Comparison Lemma
\cite[Chapter~III, Section~4]{MacLaneMoerdijk} now gives the first
equivalence.  Explicitly, a presheaf \(H\) on \(\C\) corresponds to
\[
                       \widetilde H(S)
                          =\operatorname{Nat}(S,H),
\]
where \(S\) is regarded as a subpresheaf of \(yX\); in particular,
\(\widetilde H(\top_X)=H(X)\).

Write \(j_J\) for \(J\)-saturation.  A vertical inclusion
\(R\to S\) belongs to \(W_{\mathbb X_J}\) precisely when
\(j_J(R)=j_J(S)\), that is, precisely when \(R\) is \(J\)-dense in
\(S\).  In particular, for \(S\in J(X)\), the condition associated
with \(S\to\top_X\) is the bijectivity of
\[
                 H(X)\longrightarrow\operatorname{Nat}(S,H),
\]
which is the ordinary \(J\)-sheaf condition.  Conversely, a
\(J\)-sheaf is orthogonal to every \(J\)-dense inclusion of sieves.
The second equivalence follows.
\end{proof}

\begin{example}[Locales and discrete spaces]\label{ex:relative-locale-sheaves}
Let \(L\) be a frame, regarded as a distributivity form over
\(\mathbf1\) with every join presentation admissible.  Its total site
is the usual site of the locale represented by \(L\); hence
\(\operatorname{PSh}(L)\) is the ordinary category of sheaves on that
locale.  For the extensional presentation
\[
       D\longrightarrow L
        \xlongrightarrow{j_D}\operatorname{Fix}(j_D)
\]
of Theorem~\ref{thm:frame-topologies}, the arrows inverted by the
quotient generate the topology of the fitted sublocale represented by
\(j_D\).  Therefore
\[
       \operatorname{Sh}_{\mathbb X_D}(L)\simeq
       \operatorname{Sh}(\operatorname{Fix}(j_D)).
\]

The powerset instance is especially concrete.  For \(A\subseteq X\),
the extensional presentation \eqref{eq:powerset-sequence} gives
\[
       \operatorname{PSh}(\mathcal P(X))\simeq\mathbf{Set}^{X},
       \qquad
       \operatorname{Sh}_{\mathbb X_A}(\mathcal P(X))
          \simeq\mathbf{Set}^{A}.
\]
Thus the extensional presentation literally restricts sheaves on the
discrete space \(X\) to sheaves on the selected subspace \(A\).
\end{example}

Theorem~\ref{thm:relative-sieve-sheaves} suggests two further problems.
First, one should characterize those distributivity forms for which the
sheafified top-cluster section recovers the original form as a pullback
of the subobject form.  This requires both separation of clusters and a
comprehension condition asserting that every relevant subobject is
represented by a cluster.  Second, it would be useful to identify
relative sheaf categories arising from the algebraic examples of
Section~\ref{sec:applications}.  The additive-subgroup form, for
example, naturally expresses descent along additive generation and
should not be expected by itself to recover schemes.  A connection with
scheme theory would instead require a suitably oriented ideal or
radical-ideal form, Zariski localization covers, and an additional local
representability condition.

\end{document}